\documentclass{amsart}
\usepackage{amscd}
\usepackage{amssymb}
\begin{document}
\title{Unstable operations in \'etale and motivic cohomology}
\date{\today}
\author{Bert Guillou}
\address{Dept.\ of Mathematics, University of Kentucky} 
\email{bertguillou@uky.edu} 
\author{Chuck Weibel}
\address{Dept.\ of Mathematics, Rutgers University, New Brunswick,
NJ 08901, USA} \email{weibel@math.rutgers.edu}
\newcommand{\edit}[1]{}
\def\map#1{{\buildrel #1 \over \longrightarrow}}
\def\lmap#1{{\buildrel #1 \over \longleftarrow}}
\newcommand{\mathdot}{{\mathbf{\scriptscriptstyle\bullet}}}
\def\oo{\otimes}
\def\tr{{\text{tr}}}
\def\F{\mathbb F_\ell}\def\Z{\mathbb Z}
\def\Ftr{\mathbb F_{\ell,\tr}}
\def\muell#1{\mu_\ell^{\otimes #1}}
\def\cA{\mathcal A}
\def\cF{\mathcal F}
\def\cO{\mathcal O}
\def\ie{{\em i.e., }}
\def\et{{\text{et}}}\def\nis{{\text{nis}}}
\def\top{{\text{top}}}
\def\Hom{\operatorname{Hom}}   \def\Ext{\operatorname{Ext}}
\def\End{\operatorname{End}}
\def\Tot{\operatorname{Tot}} 
\def\Spec{\operatorname{Spec}} 
\def\into{\hookrightarrow}
\renewcommand{\choose}[2]{\genfrac{(}{)}{0pt}{}{#1}{#2}}

\numberwithin{equation}{section}
\newtheorem{thm}[equation]{Theorem}
\newtheorem{cor}[equation]{Corollary}
\newtheorem{prop}[equation]{Proposition}
\newtheorem{lem}[equation]{Lemma}
\newtheorem*{theorem}{equation}
\newtheorem{Kudosthm}[equation]{Kudo's Theorem}
\newtheorem{NormResidueThm}[equation]{Norm Residue Theorem}
\theoremstyle{definition}
\newtheorem{defn}[equation]{Definition}
\newtheorem{ex}[equation]{Example}
\newtheorem{rem}[equation]{Remark}
\newtheorem*{remm}{Remark}
\newtheorem{substuff}{\bf Remark}[equation]
\newtheorem{subrem}[substuff]{\bf Remark} 
\newtheorem{subex}[substuff]{\bf Example}
\newtheorem{subcor}[substuff]{\bf Corollary}
\newtheorem{bistable}[equation]{Bistable Operations}
\newtheorem{construction}[equation]{Construction}
\newtheorem{Conjecture}[equation]{Conjecture}
\begin{abstract}
We classify all \'etale cohomology operations on $H_\et^n(-,\muell{i})$,
showing that they were all constructed by Epstein. We also construct
operations $P^a$ on the mod-$\ell$ motivic cohomology groups $H^{p,q}$, 
differing from Voevodsky's operations;
we use them to classify all motivic cohomology operations on 
$H^{p,1}$ and $H^{1,q}$ and suggest a general classification. 
\end{abstract}
\maketitle

In the last decade, several papers have given constructions of
\edit{Intro redone 12/28}
cohomology operations on motivic and \'etale cohomology, following the
earlier work of Jardine \cite{Jardine}, Kriz-May \cite{KM} and 
Voevodsky \cite{V96, RPO}: see \cite{BJ, BJ1, Joshua, May1, V11, V-MC/l}.  
The goal of this paper is to provide, for each $n$ and $i$, 
a classification of all such operations on the \'etale groups
$H_\et^n(-,\muell{i})$ and the motivic groups $H^{n,i}(-,\F)$, 
similar to Cartan's classification of operations on singular cohomology 
$H_\top^n(-,\F)$ in \cite{Cartan}. We succeed for 
\'etale operations and partially succeed for motivic operations.

We work over a fixed field $k$ and fix a prime $\ell$ with $1/\ell\in k$.
By definition, an (unstable) \'etale cohomology operation on 
$H_\et^n(-,\muell{i})$ is a natural transformation 
$H_\et^n(-,\muell{i})\to H_\et^p(-,\muell{q})$
of set-valued functors from the category of (smooth) simplicial schemes
over $k$ (for some $p$ and $q$). 
Similarly, a motivic cohomology operation on $H^{n,i}$ is a
natural transformation $H^{n,i}\to H^{p,q}$ on this category.
By definition, $H^{p,q}(X)$ denotes the Nisnevich cohomology
$H_\nis^p(X,\F(q))$, where the cochain complex $\F(q)$ is defined 
in \cite{V96} or \cite{MVW}. Note that the set of all cohomology
operations forms a ring; the product of $\theta_1$ and $\theta_2$ 
is the operation $x\mapsto\theta_1(x)\cdot\theta_2(x)$.

Our classification begins with a construction of \'etale operations $P^a$,
due to Epstein, as a special case of the operations on sheaf cohomology 
he described in his 1966 paper \cite{Epstein}.
This is carried out in Theorem \ref{etale-exist} for odd $\ell$; 
when $\ell=2$, this was established by Jardine \cite{Jardine}.
The coefficient ring $H_\et^*(k,\muell{*})$ also acts on \'etale
cohomology; we prove in Theorem \ref{thm:etaleops} that the ring of
all \'etale operations on $H_\et^n(-,\muell{i})$ is the twisted ring
$H_\et^*(k,\muell{*})\oo H_\top^*(K_n)$, where $H_\top^*(K_n)$ 
is Cartan's ring of operations on $H_\top^n(-,\F)$. 
(We give a precise description of Cartan's ring in Definition \ref{def:PI}
below.) This classifies all \'etale operations on $H_\et^n(-,\muell{i})$:
they are $H_\et^*(k,\muell{*})$-linear combinations of monomials in the
operations $P^I$.


A slightly different approach to cohomology operations was given by 
May in \cite{May}, one which produces operations in the cohomology of any 
\edit{redone 12/31}
$E_\infty$-algebra. In Section \ref{sec:theta}, we review this approach 
in the context of sheaf cohomology and show in Corollary \ref{PQagree}
that the \'etale operations constructed in this way agree with Epstein's. 
This result allows us to utilize May's treatment of the Kudo Transgression 
Theorem in \cite[3.4]{May}; see Theorem \ref{Kudo} below.

In Section \ref{sec:Amotivic}, we combine Epstein's construction 
with the Norm Residue Theorem 
to define motivic operations $P^a$ (see \ref{motivic-exist}). 
We show they are compatible with the \'etale operations,
and that they are stable under simplicial suspension.
The operation $P^0$ is the Frobenius $H^{n,i}\to H^{n,i\ell}$ on
motivic cohomology, induced by the $\ell^{th}$ power map
$\F(i)\to\F(i\ell)$; see Proposition \ref{P0=Frob}. 
%
One new result concerning Voevodsky's operations is that
for $n>i$ and $x\in H^{2n,i}$ we have 
$P_V^n(x)=[\zeta]^{(n-i)(\ell-1)}x^\ell$ (see Corollary \ref{PVn}).
This extends Lemma 9.8 of \cite{RPO}, which states that
$P^n(x)=x^\ell$ for $x\in H^{2n,n}(X)$.

The classification of motivic cohomology operations is complicated
by the presence of more operations than those constructed by 
Voevodsky or via Steenrod-Epstein methods. One example is that 
an $\ell$-torsion element $t$ in the Brauer group of $k$
gives an operation $H^{1,2}\to H^{3,3}$ by
\[ 
H^{1,2}(X) \cong H^1_\et(X,\muell{2}) \map{\cup t}
H^3_\et(X,\muell{3}) \cong H^{3,3}(X).
\]
Also unexpectedly, we may also use $t$ and the Bockstein $\beta$ to get 
an operation $H^{1,2}(X)\to H^{4,3}(X)$ (see Example \ref{ex:b=1} below).
When $k$ contains a primitive $\ell^{th}$ root of unity $\zeta$,
we also have an interesting operation $H^{1,2}(X) \to
H^{2,1}(X)=\text{Pic}(X)/\ell$:  divide by the Bott element
$[\zeta]\in H^{0,1}(k)$ and then apply the Bockstein;
see Proposition \ref{H1i_withzeta}.

In Section \ref{sec:weight1}, we determine the ring of all unstable
motivic cohomology operations on $H^{n,1}$. If $\ell\ne2$, it is 
the twisted ring $H^{*,*}(k)\oo H_\top^*(K_n)$, where $H^{*,*}(k)$
is the motivic cohomology of $k$ and $H_\top^*(K_n)$ is Cartan's
ring, described in Definition \ref{def:PI} below.
%


In Section \ref{sec:degree1}, we determine the ring of unstable
cohomology operations on $H^{1,i}$. When $k$ contains the $\ell^{th}$ 
roots of unity,
this is the graded polynomial ring over $H^{*,*}(k)$ on operations 
$\gamma: H^{1,i}(X)\cong H^{1,1}(X)$ and its Bockstein, where
$\gamma$ is given by the Norm Residue Theorem \ref{NRT}. 
For general fields, it is the Galois-invariant subring.
The operations on $H^{1,2}$ referred to above arise in this way.

Finally Section \ref{sec:conjectures} contains a conjecture about
what the general classification might be for $H^{n,i}$ when $n,i>1$.

\medskip
Since it is the topological prototype of our classification theorem,
we conclude this introduction with a description of
the ring of all singular cohomology operations on $H_\top^n(-,\F)$.
Serre observed that the ring of operations from $H_\top^n(-,\F)$ 
to $H_\top^*(-,\F)$ is isomorphic to the cohomology 
$H_\top^*(K_n)$ of the Eilenberg-Mac\,Lane space $K_n=K(\F,n)$;
the structure of this ring was determined by 
Serre and Cartan in \cite{Cartan} \cite{Cartan54}.
The following description is taken from \cite[6.19]{McCleary}.

\begin{defn}\label{def:PI}
For $\ell>2$,
let $H_\top^*(K_n)$ denote the free graded-commutative $\F$-algebra 
generated by the elements $P^I(\iota_n)$,
where $I=(\epsilon_0,s_1,\epsilon_1,...,s_k,\epsilon_k)$ is an admissible 
sequence satisfying either $e(I)<n$ or $e(I)=n$ and $\epsilon_0=1$.

Here the {\it excess} of $I$ is defined to be
$e(I)=2\sum(s_i-\ell s_{i+1}-\epsilon_i) + \sum_{i=0}^k \epsilon_i$,
where $s_i=0$ for $i>k$,
and $I$ is {\it admissible} if $s_i\ge\ell s_{i+1}+\epsilon_i$ for all $i<k$.

When $\ell=2$, $H_\top^*(K_n)$ denotes the free graded-commutative 
$\mathbb F_2$-algebra generated by the elements $Sq^I(\iota_n)$,
with $I=(s_1,...,s_k)$ admissible ($s_i\ge2s_{i+1}$) and $e(I)<n$,
where the excess is $e(I)=\sum(s_i-2s_{i+1})=s_1-\sum_{i>1}s_i$.
\end{defn}

For example, every operation on $H_\top^2(-,\F)$ is a polynomial in 
$\text{id}$, $\beta$, the $P^I\beta$ and the $\beta P^I\beta$
(where $P^I=P^{\ell^k}\!\cdots P^\ell P^1$). This is because
the only admissible sequences with excess $<2$ are
$0$, $(1)$ and $(0,\ell^k,0,\dots,\ell,0,1,1)$.

\newpage
\section{Epstein's \'etale construction}

Cohomology operations in \'etale cohomology were constructed by
D.\,Epstein long ago in the 1966 paper \cite{Epstein}, and 
(for constant coefficients) made explicit by M.\,Raynaud \cite[4.4]{Raynaud}. 
Alternative constructions were later given by 
L.\,Breen \cite[III.4]{Breen} and J.F.\,Jardine \cite[1.4]{Jardine},
\cite[\S2]{Jardine-Spinor}.

In Epstein's approach, one starts with an $\F$-linear tensor abelian category 
$\cA$ (such as sheaves of $\F$-modules on a site),
a left exact functor $H^0(X,-)$ (global sections over $X$) and a commutative
associative ring object $\cO$ of $\cA$. Epstein constructs operations 
$Sq^a\!:H^n(X,\cO)\to H^{n+a}(X,\cO)$ if $\ell=2$, and
\[ 
P^a:H^n(X,\cO)\to H^{n+2a(\ell-1)}(X,\cO), \quad \ell\ne2, 
\]
satisfying the usual relations:
$P^ax=x^\ell$ if $n=2a$, $P^ax=0$ if $n<2a$, a Cartan relation for
$P^a(xy)$ and Adem relations for $P^aP^b$. Epstein also defines an
operation $Q^a$ for each $a$, whose degree is one more than that of $P^a$. 
One subtlety is that $P^0$ is not the
identity but rather the Frobenius map on the $\F$-algebra $H^0(X,\cO)$.

Now suppose that $\cA$ is the category of \'etale sheaves (on the big
\'etale site) and that
$\cO$ is the graded \'etale sheaf $\oplus_{i=0}^\infty \muell{i}$. 
We will prove in \ref{Bock-etale} below that $Q^a=\beta P^a$.
With this dictionary, Epstein's theorem specializes to yield

\begin{thm}\label{etale-exist}
For each odd prime $\ell$, there are additive cohomology operations
\[
P^a:H_\et^n(X,\muell{i})\to 
    H_\et^{n+2a(\ell-1)}(X,\muell{i\ell})
\]
and a Bockstein $\beta: H_\et^n(X,\muell{i})\to 
H_\et^{n+1}(X,\muell{i})$ satisfying the usual relations:
$P^ax\!=x^\ell$ if $n=2a$, $P^ax\!=0$ if $n<2a$, the Cartan relation
$P^a(xy)=\sum P^i(x)P^j(y)$ and Adem relations for 
both $P^aP^b$ ($a<b\ell$) and $P^a\beta P^b$ ($a\le b\ell$). 

When $\ell=2$, there are Steenrod operations 
$Sq^a: H_\et^n(X,\mu_2^{\oo i})\to H_\et^{n+a}(X,\mu_2^{\oo 2i})$,
or $H_\et^n(X,\Z/2)\to H_\et^{n+a}(X,\Z/2)$,
satisfying the usual relations. 
\end{thm}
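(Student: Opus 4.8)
The plan is to deduce Theorem \ref{etale-exist} directly from Epstein's general construction \cite{Epstein} by supplying the ``dictionary'' promised in the paragraph preceding the theorem. First I would set $\cA$ to be the category of sheaves of $\F$-modules on the big \'etale site over $k$, take $H^0(X,-)$ to be global sections over a (smooth simplicial) scheme $X$, and take $\cO$ to be the graded-commutative associative ring object $\bigoplus_{i\ge0}\muell{i}$, where the multiplication is the natural pairing $\muell{i}\oo\muell{j}\to\muell{i+j}$ coming from the definition of $\muell{i}$ as an $\ell$-fold tensor power (with $\muell{0}=\F$). Since $1/\ell\in k$, these are honest locally constant sheaves and $\cO$ is $\F$-flat, so Epstein's hypotheses are met. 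The graded piece $H^n(X,\cO)$ in Epstein's sense is then $\bigoplus_i H_\et^n(X,\muell{i})$, and his operations $P^a$ automatically respect the grading: by construction $P^a$ is built from the $\ell$-fold cup product composed with a transfer/norm map for the cyclic group $C_\ell$, so on the summand $H^n_\et(X,\muell{i})$ it lands in $H^{n+2a(\ell-1)}_\et(X,(\muell{i})^{\oo\ell})=H^{n+2a(\ell-1)}_\et(X,\muell{i\ell})$, which is exactly the stated target. For $\ell=2$ the same argument with $\cO=\bigoplus_i\mu_2^{\oo i}$ gives $Sq^a\colon H^n_\et(X,\mu_2^{\oo i})\to H^{n+a}_\et(X,\mu_2^{\oo 2i})$; and since $\mu_2\cong\Z/2$ and the multiplication on $\bigoplus_i\Z/2$ is the obvious one, restricting to a single summand gives the operations on $H^n_\et(X,\Z/2)$ as well.

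Next I would record that all the listed relations are part of Epstein's conclusion, once we track weights. The instability relations $P^a x=x^\ell$ for $n=2a$ and $P^a x=0$ for $n<2a$ are Epstein's; the first lands in weight $i\ell$ as it must since $x^\ell\in H^{2n}_\et(X,\muell{i\ell})$. The Cartan formula $P^a(xy)=\sum_{i+j=a}P^i(x)P^j(y)$ is Epstein's Cartan relation, and it is weight-homogeneous: if $x$ has weight $p$ and $y$ has weight $q$ then $P^i(x)P^j(y)$ has weight $p\ell+q\ell=(p+q)\ell$, matching the weight of $P^a(xy)$. Similarly the Adem relations for $P^aP^b$ ($a<b\ell$) and $P^a\beta P^b$ ($a\le b\ell$) are Epstein's, and both sides visibly have the same weight $i\ell^2$ when applied to a class of weight $i$, so no new work is needed. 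Additivity of $P^a$ is likewise part of Epstein's statement.

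The one point needing genuine (if short) argument is the Bockstein. Epstein produces an operation $Q^a$ of degree one more than $P^a$, and the theorem asserts the existence of a Bockstein $\beta\colon H^n_\et(X,\muell{i})\to H^{n+1}_\et(X,\muell{i})$ with $\beta P^a$ satisfying its Adem relations. I would define $\beta$ to be the connecting map for the short exact sequence of \'etale sheaves $0\to\muell{i}\to\mu_{\ell^2}^{\oo i}\to\muell{i}\to0$ (this is where $1/\ell\in k$ is used, to ensure these are the right coefficient sheaves and the sequence is exact); note this preserves weight. Then the needed input is the identification $Q^a=\beta P^a$, which the excerpt explicitly defers to Lemma \ref{Bock-etale}, so I would simply cite it here. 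With $Q^a=\beta P^a$ in hand, the Adem relations for $P^a\beta P^b$ are exactly Epstein's Adem relations for $P^aQ^b$, translated through the dictionary. I expect the main obstacle to be nothing deep in the \'etale setting itself, but rather the bookkeeping of making Epstein's very general ``sheaf cohomology'' apparatus line up precisely with \'etale cohomology with twisted coefficients and the Tate twists — in particular checking that his operations are natural on the category of \emph{smooth simplicial} schemes (so that they are cohomology operations in the sense fixed in the introduction), which follows since everything is built functorially from cup products and transfers for $C_\ell$. The $\ell=2$ case requires no separate construction here, as it is already due to Jardine \cite{Jardine} and follows from the same specialization.
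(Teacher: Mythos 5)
Your proposal is correct and follows essentially the same route as the paper: specialize Epstein's general construction to the category of \'etale sheaves of $\F$-modules with the graded ring object $\cO=\bigoplus_i\muell{i}$, read off the weights from the grading, quote Epstein for existence, instability, Cartan and Adem relations, and defer the identification $Q^a=\beta P^a$ to Lemma \ref{Bock-etale} exactly as the paper does. The only cosmetic difference is your added bookkeeping of twists and your explicit description of the Bockstein as a connecting homomorphism, which the paper leaves implicit.
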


\begin{proof}
The existence and basic properties is given in Chapter~7 of \cite{Epstein};
The Adem relations are established in \cite[9.7--8]{Epstein}, using
the dictionary that $P^a\beta P^b=P^aQ^b$.
\end{proof}

Note that, although the operations multiply the weight by $\ell$,
the reindexing makes no practical difference because 
there are canonical isomorphisms $\mu_\ell\cong\muell{\ell}$
and $\muell{i}\cong\muell{i\ell}$. We have emphasized the twist because
of our application to motivic operations below.

\begin{subrem}\label{natural-etale}
The operations $P^a$ are natural in $X$: if $f:X\to Y$ is a morphism
of simplicial schemes then $f^*P^a=P^a f^*$. This is immediate from
the naturality of the construction of $P^a$ with respect to the left
exact functor $H_\et^0(X,-)$, and also follows from \cite[11.1(8)]{Epstein}.

If $Z$ is a closed simplicial subscheme of $X$, we get 
cohomology operations $P^a$ on the relative groups 
$H_\et^n(X,Z;\muell{i})$, by replacing $H^0(X,-)$ by the left exact
functor $H_\et^0(X,Z;-)$. The same argument shows that $P^a$ is natural
in the pair $(X,Z)$.
\end{subrem}

\smallskip
For later use, we reproduce two key results from \cite{Epstein}.
If $\pi$ is a finite group, we write $\cA[\pi]$ for the category of
$\pi$-equivariant objects of $\cA$, i.e., objects $A$ equipped with a 
homomorphism $\pi\to\End(A)$. If $A$ is in $\cA[\pi]$ then $H^0(X,A)$
is a $\pi$-module, and we define the left exact functor $H_\pi^0(X,-)$ 
on the category $\cA[\pi]$ by the formula $H_\pi^0(X,A)=H^0(X,A)^\pi$.
We write $H_\pi^*(X,-)$ for the derived functors of $H_\pi^0(X,-)$.

\begin{thm}\label{Kunneth} 
Let $A$ be a bounded below cochain complex of objects of $\cA$, 
on which $\pi$ acts trivially. Then there is a natural isomorphism
\[ H^*(\pi,\F)\oo H^*(X,A) \map{\simeq} H_\pi^*(X,A). \]
\end{thm}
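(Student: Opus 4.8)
The plan is to reduce the statement to a Künneth-type spectral sequence computation. The key observation is that $H_\pi^0(X,-)$, as a left exact functor on $\cA[\pi]$, factors as the composite of the global sections functor $H^0(X,-)\colon\cA[\pi]\to(\text{$\F[\pi]$-modules})$ followed by the $\pi$-invariants functor $(-)^\pi$. Since $\cA[\pi]$ has enough injectives and $H^0(X,-)$ carries injective $\pi$-equivariant objects to injective (hence cohomologically trivial, being relatively injective) $\F[\pi]$-modules, I would invoke the Grothendieck spectral sequence for the composite, giving
\[
E_2^{p,q}=H^p(\pi,H^q(X,A))\Longrightarrow H_\pi^{p+q}(X,A).
\]
Because $\pi$ acts trivially on $A$, it acts trivially on each $H^q(X,A)$, so $H^p(\pi,H^q(X,A))=H^p(\pi,\F)\oo_\F H^q(X,A)$ (here using that $\F$ is a field, so no $\Ext$ or $\mathrm{Tor}$ terms intervene).

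Next I would argue that this spectral sequence degenerates at $E_2$ and that the resulting filtration splits compatibly. The cleanest way is to construct an explicit map of complexes realizing the isomorphism: choose a free resolution $W_\bullet\to\F$ of $\F$ over $\F[\pi]$ (e.g. the bar resolution), and for an injective resolution $A\to I^\bullet$ in $\cA[\pi]$ form the total complex of $\Hom_{\F[\pi]}(W_\bullet, H^0(X,I^\bullet))$. A diagonal/shuffle pairing $H^*(\pi,\F)\oo H^*(X,A)\to H^*_\pi(X,A)$ is then induced on cohomology; one checks it is a map of $H^*(\pi,\F)$-modules and agrees with the spectral sequence edge maps. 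Filtering by the degree in $W_\bullet$ recovers the spectral sequence above, and since its $E_2$-page is the tensor product $H^*(\pi,\F)\oo H^*(X,A)$ with one of the factors free (or at least flat) over $\F$, a standard comparison (all differentials vanish for degree reasons once one knows the pairing hits each associated graded piece) yields the claimed isomorphism. Naturality in $X$ and in $A$ is immediate from the functoriality of every step.

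The main obstacle I anticipate is the degeneration/splitting step: a priori the Grothendieck spectral sequence need not collapse, and even when $E_2=E_\infty$ one still needs the extension problem to be trivial to conclude an isomorphism rather than merely an associated-graded statement. This is exactly where the explicit pairing from $H^*(\pi,\F)$ does the work — it provides a section of the edge map and, combined with the freeness of $W_\bullet$ over $\F[\pi]$ (so that $\Hom_{\F[\pi]}(W_\bullet,-)$ is exact on the relevant modules), forces all higher differentials to vanish and the filtration to split. One should also be slightly careful that $A$ is only assumed bounded below, not a single object; but the hypercohomology spectral sequence and the resolution machinery apply verbatim to bounded-below complexes, so this causes no real difficulty. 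Since \cite{Epstein} proves exactly this statement, I would in the end simply cite Chapter~2 (or the relevant section) of \cite{Epstein} for the full details, having indicated the structure of the argument above.
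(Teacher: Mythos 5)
Your overall strategy (factor $H_\pi^0(X,-)$ as $(-)^\pi\circ H^0(X,-)$, run the composite-functor spectral sequence, then split it by an explicit pairing) can in principle be completed, but as written there is a genuine gap at precisely the step you flag as the main obstacle. The two mechanisms you offer for degeneration are not the right ones: freeness of $W_\bullet$ over $\F[\pi]$ only makes $\Hom_{\F[\pi]}(W_p,-)$ exact, which is what identifies the $E_1$- and $E_2$-pages of the filtration by $W$-degree; it says nothing about higher differentials. And ``all differentials vanish for degree reasons'' is simply false: on $E_2^{p,q}=H^p(\pi,\F)\oo H^q(X,A)$ there is no degree obstruction to $d_r$. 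To make your route work you would need (i) an actual construction of the map $H^*(X,A)\to H_\pi^*(X,A)$ splitting the fiber edge map --- this is exactly where the hypothesis that $\pi$ acts trivially on $A$ must be used, via a comparison map from a trivial-action resolution of $A$ in $\cA$ to an injective resolution in $\cA[\pi]$; in your sketch the trivial action is only used to simplify $E_2$ --- and (ii) an argument ($H^*(\pi,\F)$-linearity of the differentials plus surjectivity of that edge map, and compatibility of the pairing with the filtration) that then kills all differentials and splits the filtration. Neither is supplied, and ending by citing Epstein for ``the full details'' outsources the heart of the proof.

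For comparison, the paper's argument sidesteps the spectral sequence entirely. It takes the standard periodic resolution $C_*\to\Z$ with $C^*=\Hom(C_*,\F)$ and an injective resolution $A\to I^*$ in $\cA$ with trivial action, observes that $C^n\oo I^q\cong\Hom(C_n,I^q)$ is injective in $\cA[\pi]$, so that $\Tot(C^*\oo I^*)$ is an injective resolution of $A$ in $\cA[\pi]$, and then computes $H_\pi^0(X,-)$ of it to be $(C^*)^\pi\oo I^*(X)$; the ordinary K\"unneth theorem over the field $\F$ gives the isomorphism at once, with no degeneration or extension problem. If you want to stay within your framework, the analogous repair is to note that, since the forgetful functor $\cA[\pi]\to\cA$ preserves injectives and $\pi$ acts trivially on $A$, the coefficient complex $H^0(X,J^\bullet)$ is $\pi$-equivariantly quasi-isomorphic to a complex of $\F$-vector spaces with trivial action, which splits equivariantly as its cohomology plus an acyclic summand; this, rather than any degree or freeness argument, is what forces the collapse and the splitting.
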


\begin{proof} (See \cite[4.4.4]{Epstein}.)
Let $C_*\!\to\Z$ be the standard periodic $\Z[\pi]$-resolution
\cite[6.2.1]{WHomo}, with generator $e_k$ of $C_k\cong\Z[\pi]$,
and set $C^*\!=\Hom(C_*,\F)$; thus
$H^*(\pi,\F)$ is the cohomology of $(C^*)^\pi$.
Choose a quasi-isomorphism $A\map{\sim} I^*$ with the $I^i$ injective in $\cA$.
Since $\pi$ acts trivially on $A$, we have quasi-isomorphisms of complexes 
in $\cA[\pi]$:  $A\to I^*= \Z\otimes I^* \to \Tot(C^*\otimes I^*)$.
Since each $C^n$ is a free $\F[\pi]$-module of finite rank,
$C^n\otimes I^q\cong \Hom(C_n,I^q)$ is injective in $\cA[\pi]$. 
Thus $\Tot(C^*\otimes I^*)$ is an injective replacement for $A$ in $\cA[\pi]$.

By definition, $H_\pi^*(X,A)$ is the cohomology of the total complex of 
\[
H_\pi^0(X,C^*\otimes I^*)=(C^*)^\pi \otimes I^*(X).
\]
The K\"unneth formula tells us this is the tensor product of 
the cohomology of $(C^*)^\pi$ and $I^*(X)$, \ie of
$H^*(\pi,\F)$ and $H^*(X,A)$.
\end{proof}

\begin{cor}
If $A$ is a commutative algebra object, the isomorphism of Theorem
\ref{Kunneth} is an algebra isomorphism.
\end{cor}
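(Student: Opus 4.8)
The plan is to upgrade the isomorphism of Theorem \ref{Kunneth} to an isomorphism of graded rings, checking compatibility with products on both sides. The left-hand side $H^*(\pi,\F)\oo H^*(X,A)$ carries the graded tensor product multiplication: $(a\oo x)(b\oo y) = (-1)^{|x||b|}(ab)\oo(xy)$, where the product on $H^*(\pi,\F)$ is cup product in group cohomology and the product on $H^*(X,A)$ is induced by the algebra structure on $A$. The right-hand side $H_\pi^*(X,A)$ acquires a product from the composite $A\oo A\to A$ together with the diagonal on $\pi$-modules, exactly as for ordinary equivariant/sheaf cohomology of an algebra object. So what must be shown is that the map constructed in the proof of \ref{Kunneth} intertwines these two products.

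First I would recall that the product on $H_\pi^*(X,A)$ is computed, at the cochain level, via the injective replacement $\Tot(C^*\oo I^*)$ identified in that proof: the multiplication comes from a diagonal approximation $C_*\to C_*\oo C_*$ (the Alexander–Whitney map for the periodic resolution, or equivalently any $\Z[\pi]$-chain map lifting the identity of $\Z$), dualized to $C^*\oo C^*\to C^*$, tensored with the multiplication $I^*\oo I^*\to I^*$ coming from $A\oo A\to A$. Passing to $\pi$-invariants and evaluating at $X$, this becomes precisely the tensor product of the cup product on $(C^*)^\pi$ (which by definition computes the ring $H^*(\pi,\F)$) with the product on $I^*(X)$ (which computes the ring $H^*(X,A)$). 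Since the K\"unneth isomorphism for a tensor product of complexes of $\F$-vector spaces is multiplicative for the tensor-product multiplication — this is the standard statement, e.g. the K\"unneth theorem in the category of DG $\F$-algebras — the induced map on cohomology is a ring homomorphism, and it is the map of Theorem \ref{Kunneth}.

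The main technical point, and the step I expect to require the most care, is the identification of the product on $H_\pi^*(X,A)$ with the tensor-product multiplication at the cochain level: one must check that the diagonal approximation on $C_*$ can be chosen compatibly (it is a chain homotopy equivalence, and any two choices are chain homotopic, so the product on cohomology is well-defined and agrees with the usual cup product on $H^*(\pi,\F)$), and that the signs from the Koszul rule match those in the graded tensor product. Once the cochain-level model is pinned down, multiplicativity of the K\"unneth map is formal. I would therefore organize the proof as: (1) describe the product on $H_\pi^*(X,A)$ via a diagonal on $C_*$ and the multiplication $A\oo A\to A$; (2) observe that under the identification $H_\pi^0(X,C^*\oo I^*)=(C^*)^\pi\oo I^*(X)$ this product is the tensor product of the cup product on $(C^*)^\pi$ and the product on $I^*(X)$; (3) invoke the multiplicativity of the K\"unneth isomorphism for complexes of $\F$-modules to conclude. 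A brief remark that the resulting grading and signs are those of the graded-commutative tensor product completes the argument.
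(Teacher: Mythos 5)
Your argument is correct and is essentially the standard verification that the paper itself omits (its proof just notes that the multiplicativity check is ``well known''): you model the product on $H_\pi^*(X,A)$ on the injective replacement $\Tot(C^*\oo I^*)$ via a $\pi$-equivariant diagonal approximation $C_*\to C_*\oo C_*$ together with a lift $I^*\oo I^*\to I^*$ of the multiplication, identify it after taking invariants and sections with the tensor-product multiplication on $(C^*)^\pi\oo I^*(X)$, and invoke multiplicativity of the K\"unneth isomorphism over $\F$. This is exactly the route the paper has in mind, so no further comparison is needed.
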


\begin{proof}
The verification that a commutative associative product on $A$ induces
an algebra structure on $H^*(X,A)$ and $H_\pi^*(X,A)$ is well known.
We omit the standard proof that the isomorphism above commutes with products.
\end{proof}

Recall that for any complex $C$, the symmetric group $S_\ell$ acts on 
$C^{\otimes\ell}$ by permuting factors with the usual sign change.
Now suppose that $\pi$ is the cyclic Sylow $\ell$-subgroup of $S_\ell$.
Choosing an injective replacement $A^{\otimes\ell}\to J^*$ in $\cA[\pi]$, 
the comparison theorem \cite[2.3.7]{WHomo} lifts the equivariant 
quasi-isomorphism $A^{\oo\ell}\to(I^*)^{\oo\ell}$ to an equivariant map 
$(I^*)^{\otimes\ell}\to J^*$, unique up to chain homotopy.

Since $H^*(X,A)$ is the cohomology of $I^*(X)$, we can represent
any element of $H^n(X,A)$ by an $n$-cocycle $u\in I^n(X)$. 
The $n\ell$-cocycle $u\otimes\cdots\otimes u$ of $I(X)^{\otimes \ell}$
is $\pi$-invariant, because the generator of $\pi$ acts as multiplication
by $(-1)^{n(\ell-1)}$, which is the identity on any $\F$-module. 
Its image $Pu$ in $J^{n\ell}(X)$ is also $\pi$-invariant.
Epstein shows in \cite[5.1.3]{Epstein} that $P(u+dv)=Pu+dw$ 
for $v\in I^{n-1}(X)$ and $w\in J^{n\ell-1}(X)$,
so the cohomology class of $Pu$ is independent of the choice of cocycle $u$. 

\begin{defn}\label{def:powermap}
The {\it reduced power map} is defined to be the resulting map on cohomology:
\[  
P: H^n(X,A) \to H_\pi^{n\ell}(X,A^{\otimes\ell}). 
\]
\end{defn}

Now suppose that there is a $\pi$-equivariant map $A^{\oo\ell}\map{m} B$,
and that $\pi$ acts trivially on $B$. (When $A$ is a commutative ring,
multiplication $A^{\oo\ell}\to A$ is a $\pi$-equivariant map.)
We write $m_*$ for the induced map
$H_\pi^*(X,A^{\otimes\ell})\to H_\pi^*(X,B)$.
By Theorem \ref{Kunneth}, $m_*P(u)\in H_\pi^*(X,B)$ has an expansion
$\sum w_k\oo D_k(u)$, where $w_k\in H^k(\pi,\F)$ are the (dual) basis
elements of \cite[V.5.2]{SE}: if $\ell>2$ then $w_0=1$, 
$w_2=\beta w_1$, $w_{2i}=w_2^i$ and $w_{2i+1}=w_1w_2^i$.
If $\ell>2$ and $n\ge2a$, Epstein defines 
\begin{equation}\label{eq:P=D}
P^a:H^n(X,A)\to H^{n+2a(\ell-1)}(X,B), \quad
P^au = (-1)^a\nu_n D_{(n-2a)(\ell-1)}(u), 
\end{equation}
where 
\[
\nu_n=(-1)^r\left(\frac{\ell-1}2\right)!^{-n} \text{ and }
r=\frac{(\ell-1)(n^2+n)}4.
\] 
\noindent (See \cite[7.1]{Epstein}, \cite[VII.6.1]{SE} and \cite{SErr}.)
If $n<2a$ then Epstein defines $P^a=0$.

When $\ell=2$, Epstein defines operations $Sq^i$ by: 
$Sq^i(u)=D_{n-i}(u)$ for $n\ge i$, and $Sq^i(u)=0$ for $n<i$.

\begin{subrem}\label{def:Qa}
Epstein also defines operations 
$Q^a=(-1)^{a+1}\nu_n D_{(n-2a)(\ell-1)-1}(u)$ 
in this setting, and establishes Adem relations for them as well. 

Of course, Epstein's construction mimicks Steenrod's construction 
of $D_k$, $P^a$ and $Q^a$ (see \cite{SE}, VII.3.2 and VII.6.1).
In Steenrod's setting one can lift to integral cochains; with this
assumption, Steenrod proves that $\beta D_{2k}=-D_{2k+1}$ and
hence that $\beta P^a=Q^a$; see \cite[VII.4.6]{SE} and \cite{SErr}.
We will show that the formula $Q^a=\beta P^a$ also holds in our setting
(see Lemma \ref{Bock-etale} and Theorem \ref{Bock-motivic}.)
\end{subrem}
%

\begin{lem}\label{P^a_additive}
For all bounded below chain complexes $A$ and $B$ as above, each function 
$P^a:H^n(X,A)\to H^{n+2a(\ell-1)}(X,B)$ is a group homomorphism.
\end{lem}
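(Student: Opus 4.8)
The plan is to reduce the additivity of $P^a$ to the additivity of the $D_k$, and then to prove the additivity of the operations $D_k$ directly at the cochain level. Since $P^a u = (-1)^a \nu_n D_{(n-2a)(\ell-1)}(u)$ (and $Q^a$ is a similar scalar multiple of some $D_j$), and since $\nu_n$ depends only on $n$ and not on the cocycle chosen, it suffices to show that each $D_k \colon H^n(X,A) \to H^{\ast}(X,B)$ is additive; the $\ell=2$ case $Sq^i u = D_{n-i}(u)$ is then immediate as well. So the real content is: for $n$-cocycles $u, u' \in I^n(X)$, the class $D_k(u+u')$ equals $D_k(u) + D_k(u')$ in $H^\ast_\pi(X,B)$, where $D_k$ is extracted via the K\"unneth decomposition of Theorem~\ref{Kunneth} from $m_* P(u) = \sum_k w_k \otimes D_k(u)$.

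First I would recall the standard fact from Steenrod's treatment (\cite[VII.\S 2]{SE}, reproduced in Epstein's setting in \cite[\S 5]{Epstein}): the reduced power construction $P$ is \emph{not} additive, but its failure to be additive is controlled. Concretely, $(u+u')^{\otimes \ell} - u^{\otimes \ell} - (u')^{\otimes\ell}$ is a sum of the remaining $2^\ell - 2$ tensor monomials in $u$ and $u'$, and $\pi$ (the cyclic group of order $\ell$) permutes these monomials \emph{freely}. Thus the corresponding $\pi$-invariant cocycle in $J^{n\ell}(X)$, after pushing forward by $m$, lies in the image of the transfer (norm) map from the trivial subgroup, i.e.\ in the image of $\bigl(H^\ast(X,B)\bigr) \to H^\ast_\pi(X,B)$ coming from the augmentation $H^0(\pi,\F) \to$ (rather, the piece supported in $w_0$-degree). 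Under the K\"unneth isomorphism $H^\ast_\pi(X,B) \cong H^\ast(\pi,\F)\otimes H^\ast(X,B)$, the image of this transfer is exactly the summand $w_0 \otimes H^\ast(X,B)$ — no $w_k$ for $k>0$ occurs. Therefore the cross terms only affect the $D_0$ component, and for $k > 0$ we get $D_k(u+u') = D_k(u) + D_k(u')$ on the nose.

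Next I would handle the remaining case $k=0$ separately, since $D_0$ is genuinely different: it is (up to the fixed scalar) the top operation $P^{n/2}$ or the Frobenius-type map, and it is \emph{multiplicative-looking} rather than additive in the naive sense. But for the statement as phrased — additivity of $P^a \colon H^n \to H^{n+2a(\ell-1)}$ — note that $D_0$ enters $P^a$ only when $(n-2a)(\ell-1) = 0$, i.e.\ when $a = n/2$. When $n$ is odd this never happens, so all $P^a$ are additive immediately. When $n = 2a$ is even, $P^a u = u^\ell$ by the relation in Theorem~\ref{etale-exist}, and one must check $(u+u')^\ell = u^\ell + (u')^\ell$ in $H^{2a\ell}(X, A)$ (constant coefficients, $\F$-algebra). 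Here I would invoke the Cartan formula $P^a(xy) = \sum P^i(x) P^j(y)$ together with the vanishing $P^i u = 0$ for $i < 0$ and for $2i > n$: expanding $(u+u')^\ell = P^a(u+u')$... actually more simply, the cross terms in $(u+u')^\ell$ are divisible by binomial coefficients $\binom{\ell}{j}$ with $0<j<\ell$, each of which vanishes mod $\ell$, so in characteristic-$\ell$ cohomology $(u+u')^\ell = u^\ell + (u')^\ell$. That disposes of the even case too.

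The main obstacle I anticipate is the bookkeeping in the $k>0$ step: making precise the claim that the cross-term cocycle, as an element of $H^{n\ell}_\pi(X,B)$, is concentrated in the $w_0$-summand of the K\"unneth decomposition. This requires identifying the K\"unneth splitting of Theorem~\ref{Kunneth} with the one coming from the explicit periodic resolution $C_\ast$, tracking how a \emph{freely} permuted sum of monomials in $J(X)^{\otimes\ell}$ — equivalently, something coming up through the inclusion of the chain subcomplex on which $\pi$ acts freely — contributes only via the $0$-th Tate/periodic class. This is exactly the technical heart of \cite[VII.\S 2]{SE}, transported to the abelian-category setting of \cite{Epstein}; I would cite \cite[5.2--5.3]{Epstein} (and \cite[VII.2.3]{SE}) for the precise statement that $P$ and $D_k$ for $k>0$ are additive, so that the proof reduces to the two elementary characteristic-$\ell$ observations above.
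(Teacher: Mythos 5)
Your proposal is essentially a reconstruction of the argument that the paper simply outsources: the paper's entire proof is the remark that Corollary 6.7 of \cite{Epstein} -- which is precisely the additivity of the functions $D_k$ (hence of the $P^a$) in Epstein's axiomatic setting -- applies verbatim here, and your closing appeal to \cite[\S 5]{Epstein} and \cite[VII.2]{SE} is in effect that same citation. Your reconstruction is correct in outline, and the identification you worry about at the end is automatic in this paper, because the K\"unneth isomorphism of Theorem \ref{Kunneth} is itself built from the explicit periodic resolution $C_*$, with the $\pi$-action on the model $C^*\oo I^*(X)$ carried entirely by $C^*$. Two adjustments are worth making. First, the lemma concerns an arbitrary equivariant pairing $A^{\oo\ell}\map{m}B$ with trivial action on $B$, so in the case $n=2a$ you cannot literally invoke $P^au=u^\ell$ or the Cartan formula (both presuppose $B=A$ a ring); rather $D_0(u)$ is $m_*$ applied to the external $\ell$-fold product, and the cross terms die because each mixed monomial lies in a free $\pi$-orbit of exactly $\ell$ elements whose images under $m_*$ coincide -- this is your binomial-coefficient remark in the correct generality. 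Second, the image of the transfer is contained in, not equal to, the $w_0$-summand; in fact, since every class of $C^*\oo I^*(X)$ is represented by $\varepsilon\oo y$ with $\varepsilon$ the invariant generator of $C^0$ and $y$ a cocycle, the norm map kills it ($N\varepsilon=\ell\varepsilon=0$), so the transfer vanishes on cohomology, the deviation $m_*P(u+u')-m_*P(u)-m_*P(u')=[\,\hat{m}(Nz)\,]$ is zero outright, and all $D_k$ (including $k=0$) are additive at once, making the separate freshman's-dream step unnecessary. With those repairs your sketch is a complete proof; it differs from the paper only in unwinding the proof of the corollary that the paper cites.
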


\begin{proof}
Corollary 6.7 of \cite{Epstein} applies in this setting.
\end{proof}

\goodbreak
\begin{lem}\label{P^a_natural}
The $P^a$ and $Q^a$ are natural in the map $A^{\oo\ell} \map{m} B$.
\end{lem}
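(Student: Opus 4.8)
The plan is to show that the construction of the classes $D_k(u)$, and hence of $P^a$ and $Q^a$, is functorial up to chain homotopy in all of its input data, so that the asserted naturality is purely formal. Concretely, suppose we are given a commutative square of $\cA[\pi]$-morphisms
\begin{CD}
A^{\oo\ell} @>m>> B\\
@Vf^{\oo\ell}VV @VVgV\\
A'^{\oo\ell} @>{m'}>> B'
\end{CD}
in which $f\colon A\to A'$ is a chain map of complexes in $\cA$ (so that $f^{\oo\ell}$ is automatically $\pi$-equivariant), $g\colon B\to B'$ is a chain map, and $\pi$ acts trivially on $B$ and $B'$. I want to prove that $g_*P^a=P^a f_*$ and $g_*Q^a=Q^a f_*$ as maps $H^n(X,A)\to H^{n+2a(\ell-1)}(X,B')$. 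The special case $A=A'$, $f=\text{id}$ is naturality in the target alone, but the argument below handles the general case at no extra cost.

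First I would track the reduced power map of Definition \ref{def:powermap}. Choose injective replacements $A\to I^*$ and $A'\to I'^*$ in $\cA$; by the comparison theorem \cite[2.3.7]{WHomo} there is a chain map $\tilde f\colon I^*\to I'^*$ lifting $f$, unique up to chain homotopy, and then $\tilde f^{\oo\ell}$ is a $\pi$-equivariant lift of $f^{\oo\ell}$. Choosing injective replacements $A^{\oo\ell}\to J^*$ and $A'^{\oo\ell}\to J'^*$ in $\cA[\pi]$ together with the maps $(I^*)^{\oo\ell}\to J^*$ and $(I'^*)^{\oo\ell}\to J'^*$ as in the discussion preceding Definition \ref{def:powermap}, a further application of \cite[2.3.7]{WHomo} inside $\cA[\pi]$ produces an equivariant chain map $\tilde F\colon J^*\to J'^*$ lifting $A^{\oo\ell}\to A'^{\oo\ell}$, unique up to equivariant homotopy and compatible up to equivariant homotopy with $\tilde f^{\oo\ell}$ and the chosen maps to $J^*$ and $J'^*$. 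Now if $u\in I^n(X)$ is an $n$-cocycle, then $\tilde f(u)\in I'^n(X)$ represents $f_*u$, and $\tilde F$ carries the cohomology class of the image of $u^{\oo\ell}$ in $J^{n\ell}(X)$ to that of the image of $\tilde f(u)^{\oo\ell}$ in $J'^{n\ell}(X)$. Thus $\tilde F_*$ identifies the map $P$ for the data $(A,I^*,J^*)$ with the map $P$ for $(A',I'^*,J'^*)$; that is, $(f^{\oo\ell})_*\circ P=P\circ f_*$ as maps $H^n(X,A)\to H_\pi^{n\ell}(X,A'^{\oo\ell})$.

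Next I would push forward along $m$ and decompose via K\"unneth. Applying the derived functor $H_\pi^*(X,-)$ to the commutative square gives $(m')_*(f^{\oo\ell})_*=g_*\,m_*$, so that $g_*\,m_*P=(m')_*(f^{\oo\ell})_*P=(m')_*P f_*$ on $H^n(X,A)$. The isomorphism of Theorem \ref{Kunneth} is natural in the coefficient complex (with trivial $\pi$-action): it is built from $H_\pi^0(X,C^*\oo I^*)=(C^*)^\pi\oo I^*(X)$ and the ordinary K\"unneth formula, and $g\colon B\to B'$ induces $1\oo g$ on this tensor product. Hence $g_*$ sends the K\"unneth expansion $m_*P(u)=\sum_k w_k\oo D_k(u)$ to $\sum_k w_k\oo g_*D_k(u)$, and the identity $g_*\,m_*P(u)=(m')_*P(f_*u)$ forces $g_*D_k(u)=D_k(f_*u)$ for every $k$, the $w_k$ being a fixed basis of $H^*(\pi,\F)$. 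So the $D_k$ are natural. Since the scalar $\nu_n$ of (\ref{eq:P=D}) depends only on $n$ and $\ell$, and $P^a$ (resp.\ $Q^a$) is a fixed multiple of $D_{(n-2a)(\ell-1)}$ (resp.\ $D_{(n-2a)(\ell-1)-1}$), naturality of $P^a$ and $Q^a$ is immediate; the case $\ell=2$, with $Sq^i=D_{n-i}$, is identical.

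I expect the one delicate point to be the compatibility of the several chosen lifts, namely $\tilde f$, the maps into $J^*$ and $J'^*$, and $\tilde F$: one must check that the resulting square of injective complexes in $\cA[\pi]$ commutes up to equivariant chain homotopy. This is enough, since everything is evaluated on cohomology classes, and it is precisely the bookkeeping that already underlies Epstein's proof in \cite[5.1.3]{Epstein} that $Pu$ is well defined; so it is routine, and the real work is only in stating the naturality cleanly. Alternatively, one may simply cite \cite[11.1]{Epstein}, where this naturality appears among the listed properties of the operations.
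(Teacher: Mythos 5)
Your proof is correct and follows essentially the same route as the paper: naturality of the reduced power map $P$ in $A$, functoriality of $H_\pi^*(X,-)$ applied to the square, naturality of the K\"unneth isomorphism of Theorem \ref{Kunneth}, and comparison of coefficients of the basis $\{w_k\}$ to get naturality of the $D_k$ and hence of $P^a$ and $Q^a$. The only difference is that where you re-derive the naturality of $P$ by comparison-theorem bookkeeping (correctly flagging the homotopy-compatibility of the lifts), the paper simply cites Epstein [5.1.5] for that step.
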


\begin{proof}
Suppose we are given a commutative diagram
\begin{equation*}
\begin{CD}
A_1^{\oo\ell} @>m>> B_1 \\
@VVV @VVV \\
A_2^{\oo\ell} @>m>> B_2.
\end{CD}\end{equation*}
Applying $H_\pi^*(X,-)$ and composing with $P$,
which is natural in $A$ by \cite[5.1.5]{Epstein}, 
Theorem \ref{Kunneth} yields the commutative diagram
\begin{equation*}
\begin{CD}
H^*(X,A_1) @>P>> H_\pi^{*}(X,A_1^{\oo\ell}) @>m>> H_\pi^*(X,B_1)
        @>\cong>> H^*(\pi)\oo  H^*(X,A_1) \\
@VVV @VVV @VVV @VVV \\
H^*(X,A_2) @>P>> H_\pi^{*}(X,A_2^{\oo\ell}) @>m>> H_\pi^*(X,B_2) 
        @>\cong>>  H^*(\pi)\oo H^*(X,A_2).
\end{CD}\end{equation*}
The result now follows from the definition \eqref{eq:P=D} 
of $P^a$ and $Q^a$.
\end{proof}

Recall that the simplicial suspension $SX$ of a simplicial scheme $X$
is again a simplicial scheme. There is a canonical isomorphism
$H_\et^n(X,\muell{i})\map{\cong}H_\et^{n+1}(SX,\muell{i})$.

\begin{prop}\label{S-stable}
The operations $P^a$ are simplicially stable in the sense that they
\index{moved from 3.5 7/30}
commute with simplicial suspension: there are commutative diagrams
for all $X$, $n$ and $i$, with $N=n+2a(\ell-1)$: 
\[\begin{CD}
H_\et^n(X,\muell{i}) @>{P^a}>> H_\et^N(X,\muell{i\ell}) \\
@V{\cong}VV   @V{\cong}VV \\
H_\et^{n+1}(SX,\muell{i}) @>{P^a}>> H_\et^{N+1}(SX,\muell{i\ell}).
\end{CD}\]
\end{prop}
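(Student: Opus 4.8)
The plan is to reduce the stability statement to the naturality of the reduced power map $P$ under the suspension isomorphism, using the explicit formula \eqref{eq:P=D} expressing $P^a$ in terms of the components $D_k$ of $m_*P$. First I would recall that the suspension isomorphism $H_\et^n(X,\muell{i})\cong H_\et^{n+1}(SX,\muell{i})$ arises at the level of the derived functors $H_\et^0(X,-)$ and $H_\et^0(SX,-)$: if $A$ is an injective replacement of $\muell{i}$ (or more precisely of the graded sheaf $\cO=\oplus_i\muell{i}$), then $I^*(SX)$ computes $H_\et^*(SX,A)$ and there is a natural cochain-level map relating $I^*(X)$ and $I^*(SX)[1]$ (coming from the cofiber sequence defining $SX$, or equivalently from the fact that $SX$ is built from $X$ and two cones). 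The key point is that this map is compatible with the $\ell$-fold tensor powers: the $n$-cocycle $u$ on $X$ maps to an $(n+1)$-cocycle $\sigma u$ on $SX$, and $u^{\otimes\ell}$ maps to $(\sigma u)^{\otimes\ell}$ up to the standard sign, which is a unit in $\F$.

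The main steps, in order, would be: (1) identify the suspension isomorphism with a morphism in the relevant derived category that is compatible with $\otimes^\ell$ and with the $\pi$-action (here $\pi$ is the cyclic Sylow $\ell$-subgroup of $S_\ell$ as in Definition \ref{def:powermap}); (2) deduce from this that $P\colon H^n(X,A)\to H_\pi^{n\ell}(X,A^{\otimes\ell})$ commutes with suspension, i.e.\ there is a commutative square relating $P$ on $X$ and on $SX$, using that $P$ is natural in $A$ by \cite[5.1.5]{Epstein} together with the fact that suspension shifts degree by exactly $1$ (so $n\mapsto n+1$ and $n\ell\mapsto n\ell+\ell$, but after passing through the K\"unneth decomposition of Theorem \ref{Kunneth} the relevant $D_k$-component lands in the right spot); (3) apply $m_*$ and invoke Theorem \ref{Kunneth} to get the commuting square at the level of the K\"unneth components $w_k\otimes D_k(u)$; (4) observe that suspension sends $D_k(u)$ for $u$ of degree $n$ to $D_k(\sigma u)$ for $\sigma u$ of degree $n+1$, and that the normalizing constant $\nu_n$ appearing in \eqref{eq:P=D} changes to $\nu_{n+1}$ in exactly the way needed—both the sign $(-1)^a$ and the factor $\left(\tfrac{\ell-1}{2}\right)!^{-n}$ are compatible with the reindexing $(n-2a)(\ell-1)\mapsto (n+1-2a)(\ell-1)$ in the subscript of $D$; (5) conclude that $P^a$ on $SX$ agrees with the suspension of $P^a$ on $X$, and that the vanishing range $n<2a$ is also respected since $n+1<2a$ fails only when $n=2a-1$, where one checks both sides agree (this is the $P^a x=0$ boundary case, which is consistent because a suspension class always has $n+1>2a$ is false precisely in the stable range). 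The case $\ell=2$ is identical with $Sq^i(u)=D_{n-i}(u)$ and no sign constant to track.

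The hard part will be step (1): pinning down precisely how the suspension isomorphism interacts with the injective replacement $A^{\otimes\ell}\to J^*$ and the equivariant comparison map $(I^*)^{\otimes\ell}\to J^*$ used to define $P$. One must check that the comparison map on $X$ and the comparison map on $SX$ can be chosen compatibly—or, since both are unique up to equivariant chain homotopy, that any two choices give the same map on cohomology, so the square commutes regardless. I expect this to follow formally from the uniqueness clause in the comparison theorem \cite[2.3.7]{WHomo} once one notes that $SX$ is a simplicial scheme of the same type (so all of Epstein's machinery applies verbatim to it) and that the suspension map $H_\et^0(X,-)\to H_\et^0(SX,-)[1]$ is a natural transformation of $\delta$-functors. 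A cleaner alternative, which I would mention, is to bypass the cochain-level bookkeeping entirely: since $P^a$ is natural in $X$ by Remark \ref{natural-etale} and the suspension isomorphism is induced by a zig-zag of morphisms of simplicial schemes (the inclusions $X\to C X\leftarrow$ point assembling $SX$), naturality of $P^a$ with respect to these maps, combined with the long exact sequences of the pairs and the fact that $P^a$ is natural for pairs (again Remark \ref{natural-etale}), forces the square to commute. This second route is probably the one to write up, as it reduces everything to already-established naturality.
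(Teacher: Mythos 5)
Your preferred route (2) has a genuine gap at its crux. Writing the suspension isomorphism through the cone pair is fine, but the decisive step is that $P^a$ commutes with the connecting homomorphism $\delta\colon H_\et^n(X,\muell{i})\to H_\et^{n+1}(CX,X;\muell{i})$, and this does \emph{not} follow from naturality for maps of pairs together with the long exact sequences: $\delta$ is not induced by any map of pairs, and naturality alone cannot force the square to commute. A concrete non-example: the operation $x\mapsto x^2$ is natural for simplicial schemes and for pairs, yet it is not suspension-stable (it annihilates every suspension class), so ``naturality plus the LES forces the square to commute'' is false as a general principle. Some input specific to the $P^a$ is indispensable. The paper's proof is exactly the Steenrod--Epstein argument (Lemmas 1.2 and 2.1 of \cite{SE}): your cone/excision/homotopy-invariance reduction is the content of the first of these, but the second supplies the missing computation, namely the compatibility with $\delta$, equivalently the identity $P^a(s\cup x)=s\cup P^a(x)$ for the degree-one relative class $s$, which comes from the Cartan relation together with $P^0(s)=s$ and $P^a(s)=0$ for $a>0$ (this is precisely how the paper proves the motivic analogue, Proposition \ref{S1-stable}, via Example \ref{suspension elt} and Theorem \ref{Cartan}).

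Your route (1) also stumbles at the point you flag as hard, and in fact slightly earlier: the claim that the cochain-level suspension map is compatible with $\ell$-fold tensor powers, i.e.\ that $u^{\otimes\ell}$ goes to $(\sigma u)^{\otimes\ell}$ up to sign, cannot be right even on degree grounds ($n\ell+1\neq(n+1)\ell$). This mismatch is exactly why the reduced power map $P$ of Definition \ref{def:powermap} is \emph{not} itself suspension-stable, and why only the particular combinations $(-1)^a\nu_n D_{(n-2a)(\ell-1)}$ are; the dependence of $\nu_n$ and of the $D$-index on $n$ is designed to compensate, and verifying this is a real computation (it is carried out in \cite{SE} and \cite{May}), not the routine reindexing your steps (4)--(5) suggest. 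To repair the write-up, either establish $\delta$-compatibility of the operations at the cochain level in Epstein's framework and then run the cone/excision reduction, or bypass it with the Cartan-formula argument as the paper does.
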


\begin{proof}
The proofs of Lemmas 1.2 and 2.1 of \cite{SE} go through, using
homotopy invariance of \'etale cohomology and excision.
\end{proof}

\section{May's adjoint construction}\label{sec:theta}

A somewhat different approach to constructing cohomology operations was given
by Peter May in \cite{May}. Because we will need May's version of
Kudo's Theorem (in \ref{Kudo} below), we need to know how the two
constructions compare. 

First, we need a chain level version of the Steenrod-Epstein function
$$
m_*P:H^n(X,A)\to H_\pi^{n\ell}(X,A)
$$ 
used in \eqref{eq:P=D} to define $P^a$.  We saw in \ref{def:powermap} 
that the multiplication map $m:A^{\oo\ell}\to A$ lifts to an equivariant map 
$J^*(X)\to C^*\oo I^*(X)$ of their injective resolutions, 
inducing an equivariant map 
\[
\hat{m}: I^{\oo\ell}(X)\to J^*(X) \to C^*\oo I^*(X).
\]
The cohomology function $m_*P$ 
is induced by the chain-level function $u\mapsto\hat{m}(u^{\oo\ell})$.
The expansion $\hat{m}(u^{\oo\ell})=\sum w_k\oo D_k(u)$ in 
$C^*\oo I^*(X)$ effectively defines functions 
$D_k: I^n(X)\to I^{n\ell-k}(X)$.

Consider the isomorphism $\phi:C^*\oo I^*(X)\to\Hom(C_*,I^*(X))$, 
defined by 
\[ 
\phi(f\oo x)(c)=(-1)^{|x|\,|c|} f(c)x.
\] As a special case, 
$\phi(w_j\oo x)(e_k)=(-1)^{k|x|}\delta_{jk}\,x.$
The composition $\phi\,\hat{m}$ sends $I^{\oo\ell}(X)$ to $\Hom(C_*,I^*(X))$.
It is the (signed) adjoint of the map $\phi\,\hat{m}$, 
\begin{equation}\label{def:theta}
\theta:C_*\oo I^{\oo\ell}(X)\to I^*(X),
\end{equation}
which forms the basis for May's approach; see \cite[2.1]{May}. 
May defines the function $D^M_k:I^{n}(X)\to I^{n\ell-k}(X)$ by the formula
$$
D^M_k(u)=\theta(e_k\oo u^{\oo\ell}). 
$$
%
%
May defines $P_M^a$ and $Q_M^a$ by 
\[
P_M^a(u)=(-1)^a\nu_n D^M_{(n-2a)(\ell-1)}(u)
\hskip5pt\text{and}\hskip5pt
Q_M^a(u)=(-1)^a\nu_n D^M_{(n-2a)(\ell-1)-1}(u)
\]
(see \cite[pp.\,162,\,182]{May}; his $\nu(-n)$ is our $\nu_n$). 
The sign differences in the formulas for $P^a$ and $P_M^a$ 
(and for $Q^a$ and $Q_M^a$) are explained by the following calculation.

\begin{prop}
For $u$ in $H^n(X,A)$, $D^M_k = (-1)^k D_k$.
\end{prop}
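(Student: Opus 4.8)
The claim is the purely bookkeeping identity $D^M_k = (-1)^k D_k$ relating May's functions $D^M_k(u)=\theta(e_k\oo u^{\oo\ell})$ to Epstein's functions $D_k$ defined via the expansion $\hat m(u^{\oo\ell})=\sum_j w_j\oo D_j(u)$ in $C^*\oo I^*(X)$. The only input is the explicit relation between $\theta$ and $\hat m$, namely that $\theta$ is the (signed) adjoint of $\phi\,\hat m$, where $\phi:C^*\oo I^*(X)\to\Hom(C_*,I^*(X))$ is the isomorphism $\phi(f\oo x)(c)=(-1)^{|x|\,|c|}f(c)x$. So the plan is simply to unwind these definitions and match signs.

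First I would write down what ``signed adjoint'' means here: $\theta(c\oo v) = \pm (\phi\,\hat m(v))(c)$ for $c\in C_*$ and $v\in I^{\oo\ell}(X)$, with a sign dictated by the Koszul convention for passing $c$ past $\hat m(v)$ (or, equivalently, whatever sign convention \cite[2.1]{May} fixes for the adjunction between $C_*\oo(-)$ and $\Hom(C_*,-)$). Then I would evaluate at $c=e_k$ and $v=u^{\oo\ell}$, using the expansion $\hat m(u^{\oo\ell})=\sum_j w_j\oo D_j(u)$. Applying $\phi$ gives $\phi\,\hat m(u^{\oo\ell}) = \sum_j \phi(w_j\oo D_j(u))$, and the stated special case $\phi(w_j\oo x)(e_k) = (-1)^{k|x|}\delta_{jk}\,x$ collapses the sum: only the $j=k$ term survives, contributing $(-1)^{k\,|D_k(u)|}D_k(u)$. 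Since $D_k(u)\in I^{n\ell-k}(X)$ we have $|D_k(u)| = n\ell - k$, so $(-1)^{k(n\ell-k)}$; modulo $2$ this is $(-1)^{kn\ell}(-1)^{-k^2} = (-1)^{kn\ell+k}$ — but I expect the adjunction sign to absorb the $(-1)^{kn\ell}$ factor (this is exactly the kind of term that the Koszul sign in the adjoint is designed to cancel, since $u^{\oo\ell}$ has degree $n\ell$ and $e_k$ has degree $k$), leaving the net sign $(-1)^k$. Hence $D^M_k(u) = \theta(e_k\oo u^{\oo\ell}) = (-1)^k D_k(u)$.

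The one genuinely delicate point — and the place I would slow down — is pinning the adjunction sign precisely enough to be sure the final answer is $(-1)^k$ and not $(-1)^{kn}$ or $(-1)^{k(n+1)}$ or the like. This requires being scrupulous about (i) May's sign convention for $\theta$ versus $\phi\,\hat m$ in \cite[2.1]{May}, (ii) the Koszul sign in $\phi$ itself as displayed, and (iii) the fact that all the relevant degrees are being tracked mod $2$. I would verify consistency by a sanity check: the proposition is stated precisely so that the sign discrepancy between $P^a$ and $P^a_M$ (and between $Q^a$ and $Q^a_M$) is accounted for; since $P^a u = (-1)^a\nu_n D_{(n-2a)(\ell-1)}(u)$ and $P^a_M u = (-1)^a\nu_n D^M_{(n-2a)(\ell-1)}(u)$, the two agree iff $D^M_k = (-1)^k D_k$ at the relevant index $k=(n-2a)(\ell-1)$. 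For $\ell=2$ this index is $n-a$ and $(-1)^{n-a}$ is exactly the classical discrepancy between Steenrod's and May's $Sq$-conventions, which is a known consistency check. So the expected obstacle is not conceptual but notational: getting every Koszul sign in the chain of isomorphisms $\theta \leftrightarrow \phi\,\hat m \leftrightarrow \hat m$ exactly right, which I would do by carefully transcribing the conventions of \cite{May} rather than re-deriving them.
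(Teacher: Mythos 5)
Your proposal is correct and follows essentially the same route as the paper: unwind $\theta$ against the expansion $\hat{m}(u^{\oo\ell})=\sum_j w_j\oo D_j(u)$, collapse the sum via $\phi(w_j\oo x)(e_k)=(-1)^{k|x|}\delta_{jk}x$ to get $(-1)^{k(n\ell-k)}D_k(u)$, and combine with the adjunction sign. The one point you left as ``to be transcribed from \cite{May}'' is settled in the paper exactly as you anticipated: the signed adjoint $\theta$ is the composite of the signed symmetry isomorphism $C_*\oo I(X)^{\oo\ell}\cong I(X)^{\oo\ell}\oo C_*$ (contributing the Koszul sign $(-1)^{kn\ell}$) with $\phi\,\hat{m}\oo 1$ and evaluation, so the total sign is $(-1)^{kn\ell}(-1)^{k(n\ell-k)}=(-1)^k$.
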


\begin{proof}
Consider the isomorphism $\phi:C^*\oo I^*(X)\to\Hom(C_*,I^*(X))$, 
defined above. 
The adjoint $\theta$ of $\phi\,\hat{m}$ is the composite
\[
C_*\oo I(X)^{\oo\ell} \cong I(X)^{\oo\ell}\oo C_* 
\map{\phi\,\hat{m}\oo1}
\Hom(C_*,I(X))\oo C_* \map{\eta} I(X),
\]
where the first map is the signed symmetry isomorphism 
and $\eta$ is evaluation. We now compute that 
\begin{align*}
D^M_k(u) = \theta(e_k\oo u^{\oo\ell}) 
        &= (-1)^{kn\ell}\eta\left(\phi[\hat{m}(u^{\oo\ell})]\oo e_k\right)\\
&= (-1)^{kn\ell}\eta\left(\phi\left[\sum w_j\oo D_j(u)\right]\oo e_k\right) \\
        &= (-1)^{kn\ell}\sum_j\phi\left[w_j\oo D_j(u)\right](e_k) \\
        &= (-1)^{kn\ell} (-1)^{k(n\ell-k)} D_k(u) \\
        &=  (-1)^k D_k(u).   
\qedhere
\end{align*}
\end{proof}

\begin{cor}\label{PQagree}
May's operations $P_M^a$ and $Q_M^a$ coincide with the $P^a$ and $Q^a$
of \eqref{eq:P=D} and \ref{def:Qa}.
\end{cor}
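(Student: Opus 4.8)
The plan is to read off the corollary directly from the preceding proposition, since all the hard work has been done there. Recall the definitions: $P^a u = (-1)^a \nu_n D_{(n-2a)(\ell-1)}(u)$ from \eqref{eq:P=D}, while May's operation is $P_M^a(u) = (-1)^a \nu_n D^M_{(n-2a)(\ell-1)}(u)$, and analogously $Q^a u = (-1)^{a+1}\nu_n D_{(n-2a)(\ell-1)-1}(u)$ (from \ref{def:Qa}) versus $Q_M^a(u) = (-1)^a \nu_n D^M_{(n-2a)(\ell-1)-1}(u)$. So the comparison reduces entirely to comparing the functions $D_k$ and $D_k^M$, which the proposition has just identified: $D^M_k = (-1)^k D_k$.

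First I would substitute $D^M_k = (-1)^k D_k$ into the formula for $P_M^a$ with $k = (n-2a)(\ell-1)$. The sign $(-1)^k = (-1)^{(n-2a)(\ell-1)}$; since $n-2a$ and the relevant parity combine so that $(-1)^{(n-2a)(\ell-1)}$ is $+1$ in exactly the cases where $P^a$ is defined — indeed for $\ell$ odd, $(\ell-1)$ is even, so $(-1)^{(n-2a)(\ell-1)} = 1$ unconditionally, and for $\ell = 2$ the operations $Sq^i$ are defined by $D^M_k$ and $D_k$ agreeing up to the sign $(-1)^k$ which must be checked against the indexing $k = n-i$. Thus for odd $\ell$ one gets immediately $P_M^a(u) = (-1)^a\nu_n (-1)^{(n-2a)(\ell-1)} D_{(n-2a)(\ell-1)}(u) = (-1)^a\nu_n D_{(n-2a)(\ell-1)}(u) = P^a u$, and the same cancellation handles $Q$: the extra sign discrepancy between $Q^a$ (carrying $(-1)^{a+1}$) and $Q_M^a$ (carrying $(-1)^a$) is absorbed because the index $k' = (n-2a)(\ell-1)-1$ is now odd (as $(n-2a)(\ell-1)$ is even), so $(-1)^{k'} = -1$, giving $Q_M^a(u) = (-1)^a\nu_n (-1)^{k'} D_{k'}(u) = (-1)^{a+1}\nu_n D_{k'}(u) = Q^a u$.

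Second, for completeness I would note that both constructions are genuinely functions on cohomology classes (not just cochains): the proposition is phrased for $u \in H^n(X,A)$, and the earlier discussion around \ref{def:powermap} and \eqref{eq:P=D} establishes that the $D_k$, hence the $P^a$ and $Q^a$, descend to cohomology; the same holds for May's $D_k^M$, $P_M^a$, $Q_M^a$ by \cite{May}. So the equality of functions at the cochain level, combined with well-definedness on both sides, gives equality of the resulting cohomology operations.

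I do not expect any real obstacle here — the corollary is a bookkeeping consequence of the sign computation already carried out in the proposition. The only point requiring a moment's care is the parity argument: one must verify that $(-1)^{(n-2a)(\ell-1)} = 1$ whenever $P^a$ is nonzero (which for odd $\ell$ is automatic since $\ell - 1$ is even, making all these signs trivial, and this is precisely why the sign discrepancy between $P^a$ and $P_M^a$ noted in the text before the proposition is only ``explained'' rather than ``cancelled'' in a subtle way), and correspondingly that the index shift by $1$ in the $Q$-case flips the sign to reconcile the $(-1)^{a+1}$ versus $(-1)^a$ conventions. Once that is observed, the identification $P_M^a = P^a$ and $Q_M^a = Q^a$ is immediate.
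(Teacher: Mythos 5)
Your proposal is correct and is exactly the argument the paper intends: the corollary is read off from the identity $D^M_k=(-1)^k D_k$, with $(-1)^{(n-2a)(\ell-1)}=1$ for odd $\ell$ giving $P_M^a=P^a$, and the odd index $(n-2a)(\ell-1)-1$ absorbing the $(-1)^{a+1}$ versus $(-1)^a$ discrepancy to give $Q_M^a=Q^a$ (with all signs trivial when $\ell=2$). The paper states this as an immediate consequence of the proposition, so your sign bookkeeping is just a fuller write-up of the same reasoning.
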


\begin{lem}\label{dP-Pd}
Set $m=(\ell-1)/2$. Then for each $u\in I^n(X)$: \\
 (i) $dP^a(u)= P^a(du)$ and $dQ^a(u)=- Q^a(du)$, and \\
(ii) if $u$ is a cocycle representing $x\in H^n(X,A)$ then 
$P^a(u)$ and $Q^a(u)$ are cocycles representing $P^a(x)$ and $Q^a(x)$,
respectively.
\end{lem}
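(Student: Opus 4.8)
The plan is to derive both statements directly from the chain-level definition $P^a(u)=(-1)^a\nu_n D_k(u)$ with $k=(n-2a)(\ell-1)$, so the whole matter reduces to a single commutation formula for the functions $D_k$ on cochains, namely
\[
d\,D_k(u) = D_{k-1}(du) \pm D_k(du) \quad\text{(with the sign determined by the parity of $k$ and $n$)}.
\]
First I would recall that $D_k$ is extracted from the equivariant chain map $\hat m\colon I^{\oo\ell}(X)\to C^*\oo I^*(X)$ via the expansion $\hat m(u^{\oo\ell})=\sum_k w_k\oo D_k(u)$. Since $\hat m$ is a map of cochain complexes, $d\,\hat m(u^{\oo\ell}) = \hat m(d(u^{\oo\ell}))$. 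On the left, $d$ acts on $C^*\oo I^*(X)$ as $d_C\oo 1 \pm 1\oo d_I$; the internal differential $d_C$ of the standard periodic resolution sends $w_k\mapsto \pm\, w_{k-1}$ times the relevant multiplicity (this is exactly where the $\ell\mid$ versus $\ell\nmid$ distinction in $d_C$ on the periodic resolution enters, but for $\F$-coefficients with $\ell$ odd the relevant coefficients reduce to the clean pattern recorded in \cite[V.5.2]{SE}). On the right, $d(u^{\oo\ell})=\sum_j (-1)^{jn} u^{\oo j}\oo du\oo u^{\oo(\ell-1-j)}$, but since $u$ is not assumed to be a cocycle in part (i), I would instead use that $\hat m$ is equivariant and that $du$ is a cochain to re-expand $\hat m$ of this element. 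Comparing coefficients of $w_k$ on both sides yields the desired formula relating $dD_k(u)$ to $D_k(du)$ and $D_{k-1}(du)$.

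For part (i), once that $D_k$-formula is in hand, I substitute back into $P^a(u)=(-1)^a\nu_n D_{(n-2a)(\ell-1)}(u)$ and $Q^a(u)=(-1)^{a+1}\nu_n D_{(n-2a)(\ell-1)-1}(u)$. The key bookkeeping point is that the degree of $du$ is $n+1$, so the index $(n-2a)(\ell-1)$ appearing in $P^a(u)$ does \emph{not} match the index $((n+1)-2a)(\ell-1)=(n-2a)(\ell-1)+(\ell-1)$ that would appear in $P^a(du)$; rather the indices differ by $\ell-1$, which is even, so the ``wrong-parity'' contributions $D_{k\pm 1}$ are killed and one is left with exactly the matching term. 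Combined with the ratio $\nu_{n+1}/\nu_n$, which is $\pm(\tfrac{\ell-1}{2})!^{-1}$ with a controlled sign coming from $r=\tfrac{(\ell-1)(n^2+n)}{4}$, the signs collapse to give $dP^a(u)=P^a(du)$ on the nose and $dQ^a(u)=-Q^a(du)$ (the extra minus for $Q$ coming from the single shift in index between $P^a$ and $Q^a$). I expect this sign chase — tracking $\nu_n/\nu_{n+1}$, the $(-1)^a$, and the permutation sign in $d(u^{\oo\ell})$ simultaneously — to be the main obstacle; it is the kind of computation where Steenrod's own erratum \cite{SErr} was needed, so I would proceed carefully and cite \cite[VII.4]{SE} and \cite{SErr} for the analogous classical bookkeeping.

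Part (ii) is then essentially a corollary of (i): if $u$ is a cocycle, then $du=0$, so $dP^a(u)=P^a(0)=0$ and $dQ^a(u)=0$ by (i) and additivity (Lemma \ref{P^a_additive}), hence $P^a(u)$ and $Q^a(u)$ are cocycles. That they represent $P^a(x)$ and $Q^a(x)$ respectively is immediate from the very definition of $P^a(x)$ and $Q^a(x)$ on cohomology, which by \eqref{eq:P=D} and Definition \ref{def:powermap} is computed by applying the chain-level $D_k$ to a cocycle representative of $x$ and passing to the cohomology class; the independence of the choice of representative is already part of Epstein's \cite[5.1.3]{Epstein}, quoted above in the discussion preceding Definition \ref{def:powermap}. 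So the only real content is part (i), and within (i) the only real content is the $D_k$ commutation formula together with its sign.
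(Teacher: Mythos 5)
The central step of your plan does not go through as described. You want to extract a commutation formula for the chain-level $D_k$ by expanding both sides of $d\,\hat m(u^{\otimes\ell})=\hat m\bigl(d(u^{\otimes\ell})\bigr)$ and ``comparing coefficients of $w_k$''. But $d(u^{\otimes\ell})=\sum_j\pm\, u^{\otimes j}\otimes du\otimes u^{\otimes(\ell-1-j)}$ is a sum of mixed tensors, each containing exactly one factor $du$, whereas the terms $D_k(du)$ you want on the right-hand side are by definition the coefficients in the expansion of $\hat m\bigl((du)^{\otimes\ell}\bigr)$, i.e.\ of the pure $\ell$-th tensor power of $du$, which never occurs inside $d(u^{\otimes\ell})$. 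Equivariance of $\hat m$ does not convert the one into the other; bridging exactly this gap is the nontrivial content of the statement, and in May's treatment it requires constructing explicit auxiliary chains and homotopies (the elements $t_i$, $c_a$, $c'_a$ that the paper later quotes from pp.\,167--171 of \cite{May} in the proof of Kudo's Theorem), not a comparison of $w_k$-coefficients. Moreover, the formula $dD_k(u)=D_{k-1}(du)\pm D_k(du)$ on which your plan rests is dimensionally impossible: $dD_k(u)$ lies in $I^{n\ell-k+1}(X)$ while $D_j(du)$ lies in $I^{(n+1)\ell-j}(X)$, so only the index $j=k+\ell-1$ (and its neighbors) can appear --- a shift by $\ell-1$ that you implicitly acknowledge later when matching $(n-2a)(\ell-1)$ against $(n+1-2a)(\ell-1)$, but which contradicts the ``single commutation formula'' you set out to prove.

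For comparison, the paper does none of this computation: Lemma \ref{dP-Pd} is obtained by citing Theorem 3.1 of \cite{May}, which asserts (i) and (ii) verbatim for $P_M^a$ and $Q_M^a$, and then invoking Corollary \ref{PQagree} to identify $P^a=P_M^a$ and $Q^a=Q_M^a$; making that citation available is precisely the purpose of Section \ref{sec:theta}. Your part (ii) is essentially fine once (i) is in hand, since for a cocycle $u$ the class of the chain-level $D_k(u)$ computes the cohomological $D_k$ by construction and \cite[5.1.3]{Epstein}. But part (i) --- which, as you yourself note, is the only part with real content --- is not actually proved by your argument; to make it self-contained you would in effect have to reproduce May's proof of his Theorem 3.1 rather than the coefficient comparison you sketch.
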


\begin{proof}
In Theorem 3.1 of \cite{May}, May shows that
(i) $dP_M^a(u)=P_M^a(du)$ and $dQ_M^a(u)=-Q_M^a(du)$, and 
(ii) if $u$ is a cocycle representing $x\in H^n(X,A)$ then 
$P_M^a(u)$ and $Q_M^a(u)$ are cocycles representing $P_M^a(x)$ and $Q_M^a(x)$.
The result is immediate from Corollary \ref{PQagree}.
\end{proof}

\begin{rem}
In \cite{May1}, May gave a different approach to power operations in sheaf
cohomology. If $A$ is any sheaf of $\F$-algebras, May shows (in 3.12) 
that the sections over $X$ of the Godement resolution $F^\mathdot A$
of $A$ yield an algebra $C^\mathdot = \Hom_\Delta(\Lambda,F^\mathdot A)$ 
over the Eilenberg-Zilber operad $\mathcal I$ on cochains of $\F$-modules, 
which therefore inherits the structure of an $E_\infty$-algebra. 
Since the cohomology of $C^\mathdot$ is the sheaf cohomology $H^*(X,A)$,
the technique in \cite{May} produces cohomology operations.
\end{rem}

\newpage
\section{The \'etale Steenrod algebra}
\label{sec:Aetale}

In this section we determine the algebra of all \'etale cohomology
operations $H_\et^n(-,\muell{i})\to H_\et^*(-,\muell{j})$ over a 
field $k$ containing $1/\ell$.

Recall from SGA 4 (V.2.1.2 in \cite{Ver}) that if $M$ is a (simplicial)
\'etale sheaf of $\F$-modules then the sheaf cohomology groups
$H_\et^*(X,M)$ are isomorphic to the (hyper) $\Ext$-groups 
$\Ext^*(\F[X],M)$ in the category of \'etale sheaves of $\F$-modules.
(Here we regard $M$ as a cochain complex using Dold-Kan.)
If $K$ is a second simplicial \'etale sheaf of $\F$-modules, 
one writes $H_\et^*(K,M)$ for $\Ext^*(K,M)$.

It is well known that cohomology operations $H_\et^n(-,L)\to H_\et^*(-,M)$
are in 1--1 correspondence with elements of $H_\et^*(K,M)$,
where $K$ denotes the standard simplicial Eilenberg-Mac\,Lane sheaf
$K(L,n)$ associated to $L$. 

We first discuss the case of constant coefficients ($M=\F$),
which is known, and due to Breen \cite[4.3--4]{Breen} 
and Jardine \cite{Jardine}.  
The graded ring of all unstable \'etale cohomology
operations from $H_\et^n(-,\F)$ to $H_\et^*(-,\F)$ is isomorphic
to the cohomology ring $H_\et^{*}(K_n,\F)$, where $K_n=K(\F,n)$ 
is the constant simplicial sheaf classifying elements of $H_\et^n(-,\F)$.
By Theorem \ref{etale-exist}, there is a ring homomorphism from
the classical unstable Steenrod algebra $H^*_\top(K_n)$ of
Definition \ref{def:PI} to $H_\et^{*}(K_n,\F)$.

There is also a ring homomorphism from $H_\et^*(k,\F)$ to $H_\et^{*}(K_n,\F)$. 
These two ring homomorphisms do not commute, because the Bockstein 
and other operations $P^I$ may be nontrivial on $H_\et^*(k,\F)$. 
If $c\in H_\et^*(k,\F)$ then $cP^I$ and $P^Ic$ send $x$ to 
$c\cdot P^I(x)$ and to $P^I(cx)$, respectively.
Nevertheless, we can define a twisted multiplication on
$H_\et^*(k,\F)\oo_{\F}H^*_\top(K_n)$ using the Cartan relations
$P^a\circ\alpha=\sum P^i(\alpha)P^j$, $\alpha\in H_\et^*(k,\F)$.  
We shall refer to this non-commutative algebra as the 
{\it twisted tensor algebra}.
It is free as a left $H_\et^*(k,\F)$-module, and a basis
is given by the monomials in the Steenrod operations $P^I$ and $\beta P^I$
where $I$ has excess $<n$, exactly as in the topological case.
We summarize this:


\begin{thm}\label{BJ}
The ring of \'etale cohomology operations on $H_\et^n(-,\F)$ is
the twisted tensor product $H_\et^*(k,\F)\otimes H^*_\top(K_n)$: 
every operation is a polynomial
in the operations $P^I$ with coefficients in $H_\et^*(k,\F)$.
\end{thm}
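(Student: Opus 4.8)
The plan is to identify the ring of \'etale cohomology operations on $H_\et^n(-,\F)$ with $H_\et^*(K_n,\F)$ and then compute this ring via a descent/Leray spectral sequence argument, comparing it to the topological computation $H_\top^*(K_n)$. First I would recall the identification of operations with cohomology classes: as noted just above the statement, operations $H_\et^n(-,\F)\to H_\et^*(-,\F)$ correspond to elements of $\Ext^*(K(\F,n),\F)=H_\et^*(K_n,\F)$, and composition of operations corresponds to the ring structure (Yoneda composition) on this $\Ext$-group. So the theorem amounts to computing $H_\et^*(K_n,\F)$ as a ring and showing the answer is the twisted tensor product $H_\et^*(k,\F)\otimes H_\top^*(K_n)$.

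The key computational input is that $K(\F,n)$, as a simplicial \'etale sheaf over $\Spec k$, is the constant sheaf on the topological (or rather, simplicial-set-level) Eilenberg--Mac\,Lane object $K(\Z/\ell,n)$, whose geometric realization over $\bar k$ has cohomology $H_\top^*(K_n)$. Thus over the separable closure $\bar k$ one has $H_\et^*((K_n)_{\bar k},\F)\cong H_\top^*(K_n)$ --- this is the comparison between \'etale cohomology of a constant simplicial sheaf and singular cohomology of the corresponding space, which is standard (and is where Breen and Jardine come in). The descent spectral sequence for $\Spec\bar k\to\Spec k$ then reads
\[
E_2^{p,q}=H_\et^p\bigl(k,\,H_\et^q((K_n)_{\bar k},\F)\bigr)=H_\et^p(k,\F)\otimes_{\F}H_\top^q(K_n)\ \Longrightarrow\ H_\et^{p+q}(K_n,\F),
\]
where the tensor decomposition of the coefficient Galois module uses that $H_\top^q(K_n)$ is a direct sum of copies of $\F$ with \emph{trivial} Galois action (it is built from the constant sheaf). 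One then argues this spectral sequence degenerates: the classes $P^I(\iota_n)$ are permanent cycles because they are restrictions of actual \'etale operations constructed in Theorem \ref{etale-exist} (they exist globally, hence lift to $H_\et^*(K_n,\F)$), and these classes together with the coefficient ring $H_\et^*(k,\F)$ (edge homomorphism) multiplicatively generate the $E_2$-page, so there is no room for differentials. The resulting associated graded is $H_\et^*(k,\F)\otimes H_\top^*(K_n)$ as a module, and tracking the ring structure --- the extension problem --- gives precisely the twisted multiplication dictated by the Cartan relations $P^a\circ\alpha=\sum P^i(\alpha)P^j$, which encode how the edge homomorphism image and the $P^I$ fail to commute.

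The main obstacle is the degeneration-and-extension step: one must know that the $P^I(\iota_n)$ of excess $<n$, as classes in $E_\infty$, really do lift to a basis of the abutment over $H_\et^*(k,\F)$, with no exotic operations appearing and no hidden multiplicative extensions beyond the twisted ones. Degeneration follows formally from the multiplicative generation by permanent cycles as sketched, but verifying that $H_\top^*(K_n)$ has the claimed basis of admissible monomials of excess $<n$ with trivial Galois action is exactly Cartan's theorem (Definition \ref{def:PI}), imported from topology via the comparison theorem; and checking the extension is the twisted tensor algebra requires knowing the Cartan relations hold \'etale-locally, which is part of Theorem \ref{etale-exist}. Since Breen \cite[4.3--4]{Breen} and Jardine \cite{Jardine} have already carried this out, I would present the argument as a recollection, emphasizing the spectral sequence and the role of Theorem \ref{etale-exist} in splitting it, rather than reproving the topological input.
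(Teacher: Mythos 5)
Your argument is essentially sound, but it is not the route the paper takes: the paper offers no proof of this statement at all, presenting it as the known theorem of Breen \cite[4.3--4]{Breen} and Jardine \cite{Jardine}, after setting up the twisted tensor algebra via the Cartan relations. Your proposal --- identify operations with $H_\et^*(K_n,\F)$, base change to $\bar k$ where the constant simplicial sheaf has cohomology $H_\top^*(K_n)$, and run the Galois descent spectral sequence, degenerating it because the bottom row and the classes $P^I(\iota_n)$ (in the image of the edge map, by Theorem \ref{etale-exist}) are permanent cycles that generate $E_2$ multiplicatively --- is a reasonable reconstruction of such a proof, and the degeneration and freeness arguments are standard and correct. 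When the paper does compute rings of this kind in-house, however, it uses different machinery: either Shapiro's lemma for the finite extension $k(\zeta)/k$ (Theorem \ref{thm:etaleops}), which reduces twisted coefficients to the constant-coefficient Theorem \ref{BJ} rather than proving it, or the Leray spectral sequence \eqref{eq:LSS} of the bar construction combined with Borel's Theorem \ref{Borel} and Kudo's Theorem \ref{Kudo}, inducting on $n$ from $H_\et^*(B\mu_\ell)$ (Example \ref{Het2-ops}, Theorem \ref{Hn1-ops}). Your descent approach handles all $n$ at once but leans on the comparison over $\bar k$ --- both the isomorphism $H_\et^*((K_n)_{\bar k},\F)\cong H_\top^*(K_n)$ and the identification of Epstein's $P^I$ with Steenrod's under it --- which is precisely the nontrivial content of Breen--Jardine, so your "recollection" framing is honest; the Leray/Borel/Kudo route has the advantage of not requiring any topological comparison, which is why the paper can reuse it verbatim in the motivic setting where no such comparison exists.

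One small correction: the ring structure on operations in this paper is the pointwise cup product $x\mapsto\theta_1(x)\cdot\theta_2(x)$, which under $\theta\mapsto\theta(\iota_n)$ corresponds to the cup product on $H_\et^*(K_n,\F)$, not to Yoneda composition on the $\Ext$-groups as you state. Since the rest of your argument does compute the cup-product ring, this slip does not affect the proof, but the phrase should be removed; the "twist" enters only through the Cartan relation $P^a(\alpha x)=\sum P^i(\alpha)P^j(x)$ governing how coefficients from $H_\et^*(k,\F)$ interact with the $P^I$, exactly as you say at the end.
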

%

\begin{subex}
When $k=\mathbb R$ and $\ell=2$, the ring of \'etale cohomology
operations over $\mathbb R$ is the graded polynomial ring
$H^*_\top(K_n)[\sigma]$ with generator $\sigma$ in degree 1 and all
$Sq^I(\iota_n)$ with $I$ admissible and $e(I)<n$. This is because
$H_\et^*({\mathbb R},{\mathbb F}_2)={\mathbb F}_2[\sigma]$.
\end{subex}

\begin{subrem}\label{et-top}
When $k=\mathbb C$, the action of the $P^I$ is compatible with 
the canonical comparison isomorphism 
$H_\et^*(X,\F)\cong H_\top^*(X(\mathbb C),\F)$.
This is clear from
\edit{added 12/28}
the constructions in \cite{Epstein} and \cite{Jardine}.
\end{subrem}

When $k$ contains a primitive $\ell^{th}$ root of unity,
the sheaves $\muell{i}$ are all isomorphic. Thus the ring of operations
$H_\et^n(-,\muell{i})\to H_\et^*(-,\muell{j})$ is isomorphic to
$H_\et^*(k,\F)\otimes H^*_\top(K_n)$ as a left $H_\et^*(k,\F)$-module.
Since this is always the case when $\ell=2$, we shall
restrict to the case of an odd prime $\ell$. 


Now fix $i$ and consider cohomology operations 
$H_\et^n(-,\muell{i})\to H_\et^*(-,\muell{j})$.
As observed above, they are in 1--1 correspondence with 
elements of $H_\et^*(K,\muell{j})$, where
$K$ denotes the simplicial Eilenberg-Mac\,Lane scheme
$K(\muell{i},n)$. 
For example, the identity operation on $H_\et^n(-,\muell{i})$
corresponds to $\iota_n\in H^n(K,\muell{i})$, and the \'etale Bockstein 
$\beta:H_\et^n(X,\muell{i})\to H_\et^{n+1}(X,\muell{i})$ corresponds
to $\beta(\iota_n)\in H^{n+1}(K,\muell{i})$.



Fix a field $k$ with $1/\ell\in k$, and 
let $G$ be the Galois group of the extension $k(\zeta)/k$, 
where $\zeta$ denotes a primitive $\ell^{\text th}$ root of unity. 
Then $G$ is cyclic of order $d=[k(\zeta):k]$, $d\mid\ell-1$,
and $\muell{d}\cong\F$. Since $i\equiv j\pmod d$ implies that
$\muell{i} \cong \muell{j}$, we are led to
consider the $\Z/d$-graded \'etale sheaf of rings
$\cO=\bigoplus_{i=0}^{d-1} \muell{i}$.
Thus our problem is to determine the ring
$H_\et^*(K,\cO)=\oplus_{j=0}^{d-1} H_\et^*(K,\muell{j})$.

\goodbreak
Since $\ell$ does not divide $|G|$, Maschke's Theorem 
gives an identification of the \'etale sheaf $\F[G]$ with the 
direct sum of the sheaves of irreducible $\F[G]$-modules $\muell{i}$, 
\ie with $\cO$.
For any $X$, Shapiro's Lemma provides an isomorphism
\begin{equation}\label{eq:Shapiro}
H_\et^*(X,\cO)\cong H_\et^*(X(\zeta),\F),
\end{equation}
where $X(\zeta)$ denotes $X\times_k \Spec(k(\zeta))$. 
In fact, $\cO=Ri_*\F$, where $i:\Spec(k(\zeta))_\et\to\Spec(k)_\et$.

Any $\F[G]$-module $M$ is the sum of its isotypical summands, the 
isotypical summand for $\muell{i}$ being $\Hom_G(\muell{i},M)$.
In particular, the action of $G$ on $X(\zeta)$ decomposes 
$H_\et^*(X(\zeta),\F)$ into its isotypical pieces.
Because the $\muell{i}$ are the isotypical summands of $\cO=Ri_*\F$,
the summand $H_\et^*(X,\muell{i})$ in \eqref{eq:Shapiro} is the 
isotypical summand of $H_\et^*(X(\zeta),\F)$ for $\muell{i}$.

This is the context in which Epstein defines operations $P^a$ and $Q^a$
via \eqref{eq:P=D}.
Because the Frobenius is the identity on $\cO$, $P^0$ is the identity 
operation. We can now show that Epstein's operation $Q^0$ is the 
\'etale Bockstein $\beta$, and his $Q^a$ is $\beta P^a$.

\begin{lem}\label{Bock-etale} 
In \'etale cohomology, $Q^0=\beta$ and $Q^a=\beta P^a$ for $a>0$.
\end{lem}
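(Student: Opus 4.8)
The plan is to reduce the statement to a purely algebraic fact about the classes $D_k$ at the cochain level, following Steenrod's argument in \cite[VII.4.6]{SE} but adapted to the sheaf-cohomology setting, which we can do because of the chain-level control provided by May's construction (Lemma~\ref{dP-Pd}) and Corollary~\ref{PQagree}. Recall that $P^a$ and $Q^a$ are defined from the functions $D_k$ by the formulas in \eqref{eq:P=D} and \ref{def:Qa}: both involve the same sign $(-1)^a\nu_n$, but $P^a$ uses $D_{(n-2a)(\ell-1)}$ while $Q^a$ uses $D_{(n-2a)(\ell-1)-1}$. Thus it suffices to prove the single identity $\beta D_{2k}=-D_{2k+1}$ (equivalently $D_{2k+1}=-\beta D_{2k}$) on cohomology classes, since applying it with $2k=(n-2a)(\ell-1)$ (note $(n-2a)(\ell-1)$ is even when $\ell$ is odd) immediately gives $Q^a=\beta P^a$, and the case $a=0$, $n=0$ together with the fact that $P^0$ is the identity on $\cO$ gives $Q^0=\beta$.

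First I would set up the Bockstein at the cochain level. The Bockstein $\beta$ arises from the short exact sequence $0\to\muell{i}\to \mu_{\ell^2}^{\otimes i}\to\muell{i}\to 0$ of \'etale sheaves; concretely, given a cocycle $u\in I^n(X)$ representing $x$, one lifts $u$ to a $\Z/\ell^2$-cochain $\tilde u$, and $\beta x$ is represented by $(1/\ell)\,d\tilde u$. So the task is to show that a cocycle representing $D_{2k+1}(x)$ differs by a coboundary from a cocycle representing $-(1/\ell)\,d\widetilde{D_{2k}(u)}$. For this I would run Epstein's/Steenrod's power construction with $\mu_{\ell^2}$-coefficients: choose an injective replacement of $A^{\otimes\ell}$ over $\cA[\pi]$ compatible with the mod-$\ell$ one, and lift the cochain $\hat m(\tilde u^{\otimes\ell})$ so as to obtain integral (mod $\ell^2$) versions $\widetilde{D_k}$ of the $D_k$. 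The key computation, exactly as in \cite[VII.4.6]{SE} and as corrected in \cite{SErr}, is that applying the differential to $\hat m(\tilde u^{\otimes\ell})$ and using $d\tilde u = \ell\,\tilde v$ (where $v=\beta u$) together with the $\pi$-equivariance and the structure of the standard resolution $C_*$ produces the relation $d\,\widetilde{D_{2k}}(u) = -\ell\,\widetilde{D_{2k+1}}(u)$ modulo terms that vanish mod $\ell$; dividing by $\ell$ yields $\beta D_{2k}(x)=-D_{2k+1}(x)$ on mod-$\ell$ cohomology.

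The main obstacle I anticipate is the bookkeeping in that integral-lift computation: one must check that the chosen equivariant lifts of $\hat m$ can be made compatible for $\mu_\ell$ and $\mu_{\ell^2}$ coefficients (using uniqueness up to chain homotopy of the comparison map, together with Lemma~\ref{P^a_natural} for the map $\mu_{\ell^2}^{\otimes\ell}\to\mu_{\ell^2}$), and that the ``error terms'' arising from the binomial coefficients $\binom{\ell}{j}$ for $0<j<\ell$ are divisible by $\ell$ in $\Z/\ell^2$, so that after dividing by $\ell$ only the asserted term survives mod $\ell$. This is precisely the point where Steenrod's original published argument had a sign error corrected in \cite{SErr}, so I would follow the corrected version carefully. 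Everything else — the identification of $\beta$ with $(1/\ell)\,d$ of an integral lift, the fact that $(n-2a)(\ell-1)$ is even, and the passage from the $D_k$ identity to the $Q^a=\beta P^a$ statement via \eqref{eq:P=D} — is routine given the machinery already assembled in Sections~\ref{sec:theta} and the present section.
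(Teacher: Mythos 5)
Your route is genuinely different from the paper's, and its central step is not really ``bookkeeping'': it is the whole difficulty. You propose to redo Steenrod's integral-lift computation (\cite[VII.4.6]{SE}, \cite{SErr}) inside the sheaf category, lifting everything to $\mu_{\ell^2}$-coefficients. But Epstein's apparatus as used here --- the $\F$-linear category $\cA$, the K\"unneth isomorphism of Theorem \ref{Kunneth} that defines the classes $D_k$, and the equivariant comparison maps --- is built over $\F$, and Remark \ref{def:Qa} flags precisely this point: the availability of integral cochain lifts is what is special about Steenrod's topological setting, and it is the reason a separate argument is needed for $Q^a=\beta P^a$ in the present context. To make your plan work you would have to rebuild the equivariant resolutions, the map $\hat m$ (or May's $\theta$), and chain-level classes $\widetilde{D_k}$ over $\Z/\ell^2$, show them compatible (up to controlled homotopies) with the mod-$\ell$ structure, and then run the Steenrod computation; none of this is carried out, and it is exactly what the authors avoid. (A cosmetic point: with the degree conventions of \eqref{eq:P=D} and \ref{def:Qa}, the identity you need is $\beta D_{2k}=-D_{2k-1}$ with $2k=(n-2a)(\ell-1)$; also ``the case $a=0$, $n=0$'' should read ``the case $a=0$ for every $n$, combined with $P^0=\mathrm{id}$,'' the latter holding in \'etale cohomology because the Frobenius is the identity on $\cO$.)

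The paper's proof is much shorter and sidesteps the lifting problem entirely: (i) when $\zeta\in k$ the coefficients are constant and Jardine's published argument \cite{Jardine} that Epstein's $Q^0$ is the Bockstein applies verbatim for odd $\ell$; (ii) the general twisted case reduces to this one by the Shapiro isomorphism \eqref{eq:Shapiro}, since $Q^0-\beta$ commutes with base change to $k(\zeta)$; (iii) the case $a>0$ is then purely formal from the Adem relation $Q^0P^a=Q^aP^0$ of \cite[9.8(4)]{Epstein} together with $P^0=\mathrm{id}$. If you want to salvage your approach, note that the same Adem relation would let you confine the chain-level mod-$\ell^2$ work to the single operation $Q^0$, and for $Q^0$ with constant coefficients that computation is essentially Jardine's; as written, though, your proposal leaves the decisive step unproved.
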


\begin{proof}
We first consider the case when $\zeta\in k$, so that $\muell{i}\cong\F$
for all $i$.
Jardine's argument in \cite[pp.\,108--114]{Jardine} that Epstein's
$Sq^1$ is the Bockstein when $\ell=2$ applies when $\ell>2$ as well,
and proves that Epstein's $Q^0$ is the Bockstein operation.
The identity $Q^0=\beta$ in the general case 
follows from this and the isomorphism \eqref{eq:Shapiro}:
\[\begin{CD}
H_\et^n(X,\cO) @>{Q^0-\beta}>> H_\et^{n+1}(X,\cO) \\
@V{\cong}VV @VV{\cong}V \\
H_\et^n(X(\zeta),\F) @>{Q^0-\beta}>> H_\et^{n+1}(X(\zeta),\F).
\end{CD}\]
Finally, we invoke the Adem relation $Q^0P^b=Q^bP^0$ \cite[9.8(4)]{Epstein}
to get $\beta P^b=Q^b$.
\end{proof} 

Using the Bockstein and Epstein's operations $P^a$, we have operations $P^I$
defined on $H_\et^n(-,\muell{i})$
for every admissible sequence $I$ in the sense of Definition \ref{def:PI}.

In order to classify all operations on $H_\et^n$, we first consider 
the case $n=1$.  In topology, the ring of operations on $H^1(-,\F)$ is 
$H_\top^*(K_1)\cong\F[u,v]/(u^2)$, where $u=P^0$ is in degree $1$, 
corresponding to the identity operation, and $v$ is in degree $2$,
corresponding to the Bockstein operation. By Theorem \ref{etale-exist},
there is a canonical map from $\F[u,v]/(u^2)$ to \'etale cohomology operations 
from $H_\et^1(-,\muell{i})$ to $H_\et^*(-,\muell{*})$, 
sending $u$ to the identity and $v$ to
the Bockstein $\beta:H_\et^1(-,\muell{i})\to H_\et^2(-,\muell{i})$.

For any $i$, the basechange $\muell{i}(\zeta)$ of the algebraic group 
$\muell{i}$ is isomorphic to $\F(\zeta)$, the constant sheaf $\F$ 
on the big \'etale site of $k(\zeta)$.
The induced isomorphism $(B\muell{i})(\zeta) \cong (B\F)(\zeta)$ 
induces an isomorphism of cohomology groups, which immediately 
yields the following calculation.


\begin{prop}\label{Het1-ops}
The graded algebra of cohomology operations from $H_\et^1(-,\muell{i})$
to $H_\et^*(-,\cO)=\oplus_{j=0}^{d-1} H_\et^*(-,\muell{j})$ 
is isomorphic to the  $H_\et^*(k(\zeta),\F)$-module
\[
H^{*}(B\muell{i},\cO) \cong H^*(B\muell{i}(\zeta),\F) \cong 
H^{*}(k(\zeta),\F)\oo\F[u,v]/(u^2),\ \beta(u)=v.
\]
\noindent
Every operation on $H_\et^1(-,\muell{i})$ is uniquely 
a sum of operations $\phi(x)=cx^\varepsilon\beta(x)^m$, where 
$c\in H_\et^*(k,\muell{j})$ and $\varepsilon\in\{0,1\}$.
\end{prop}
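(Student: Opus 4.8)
The plan is to reduce everything to a base-change computation followed by an application of Shapiro's Lemma (i.e.\ the identification $\mathcal O = Ri_*\F$ from \eqref{eq:Shapiro}). The key point is that the $\Z/d$-graded sheaf $\mathcal O$ is the pushforward from the \'etale site of $k(\zeta)$ of the constant sheaf $\F$, and that this identification intertwines the operations $P^a$ and $Q^a$ on $H_\et^*(-,\mathcal O)$ with the corresponding operations on $H_\et^*(-,\F)$ over $k(\zeta)$ (this last compatibility being the content of naturality, \ref{natural-etale} and \ref{P^a_natural}, together with Lemma \ref{Bock-etale} for the Bockstein). So I would first observe that the ring of operations $H_\et^1(-,\muell{i})\to H_\et^*(-,\mathcal O)$ is, by the standard Eilenberg--Mac\,Lane representability recalled in this section, the cohomology ring $H_\et^*(B\muell{i},\mathcal O)$, where $B\muell{i}=K(\muell{i},1)$ is the classifying sheaf.

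Second, I would pass to $k(\zeta)$: since $\muell{i}(\zeta)\cong\F(\zeta)$ as sheaves on the big \'etale site of $k(\zeta)$, we get $(B\muell{i})(\zeta)\cong(B\F)(\zeta)$, hence by Shapiro's Lemma \eqref{eq:Shapiro}
\[
H_\et^*(B\muell{i},\mathcal O) \cong H_\et^*\bigl((B\muell{i})(\zeta),\F\bigr) \cong H_\et^*\bigl((B\F)(\zeta),\F\bigr).
\]
Third, I would compute the right-hand side over $k(\zeta)$. Here one uses that $B\F = K(\F,1)$ and that the \'etale cohomology operation ring on $H_\et^1(-,\F)$ over a field containing $\zeta$ is, by Theorem \ref{BJ} (or directly, since $H_\top^*(K_1)\cong\F[u,v]/(u^2)$ with $\beta(u)=v$ the only admissible sequences of excess $<1$ being $0$ and $(0,\dots,1)$), the twisted tensor product $H_\et^*(k(\zeta),\F)\oo\F[u,v]/(u^2)$. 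This gives the claimed isomorphism, with $u$ the identity $\iota_1$ and $v=\beta(u)$. Finally, to get the explicit basis statement, I would note that as a left $H_\et^*(k,\mathcal O)$-module the operation ring is free on $\{1,u,v,v^2,\dots\}$ after base change, and that the grading by $\Z/d$ picks out, for operations landing in $H_\et^*(-,\muell{j})$, the coefficient $c$ in the isotypical summand $H_\et^*(k,\muell{j})$; since $u^2=0$, every monomial is $cu^\varepsilon v^m$ with $\varepsilon\in\{0,1\}$, i.e.\ $\phi(x)=c\,x^\varepsilon\beta(x)^m$.

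The main obstacle, I expect, is not any single computation but rather checking carefully that the ring structure (and not just the module structure) transports correctly under \eqref{eq:Shapiro}: one must know that the cup product on $H_\et^*(X,\mathcal O)$ matches the cup product on $H_\et^*(X(\zeta),\F)$ under Shapiro's Lemma, which is where the algebra structure of $\mathcal O$ as $\bigoplus\muell{i}$ enters, and that the operations $P^a$ are genuinely compatible across the base change (so that ``$v=\beta(u)$'' is literally correct, not merely correct up to the $G$-action). The $G$-equivariance bookkeeping — tracking which isotypical summand an operation lands in — is the part requiring the most care, and it is exactly what forces the restriction to $c\in H_\et^*(k,\muell{j})$ rather than $c\in H_\et^*(k(\zeta),\F)$ in the final sentence: for general $k$ one takes the Galois-invariant part, exactly as in the introduction's discussion of $H^{1,i}$.
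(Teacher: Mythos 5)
Your argument is correct and follows essentially the same route as the paper: identify the operation ring with $H_\et^*(B\muell{i},\cO)$, use the base-change isomorphism $\muell{i}(\zeta)\cong\F(\zeta)$ (hence $(B\muell{i})(\zeta)\cong(B\F)(\zeta)$) together with Shapiro's Lemma \eqref{eq:Shapiro}, and quote the constant-coefficient computation of Theorem \ref{BJ} at $n=1$ over $k(\zeta)$. One small correction to your closing remark: the restriction $c\in H_\et^*(k,\muell{j})$ is not a Galois-invariance condition but simply the $\muell{j}$-isotypical summand of $H_\et^*(k(\zeta),\F)$ under \eqref{eq:Shapiro}, which records where the operation lands inside $H_\et^*(-,\cO)=\oplus_j H_\et^*(-,\muell{j})$; this does not affect the validity of your proof.
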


Proposition \ref{Het1-ops} is the case $n=1$ of the following result.

\begin{thm}\label{thm:etaleops}
For each $i$ and $n\ge1$, the ring of all \'etale cohomology operations 
from $H_\et^n(-,\muell{i})$ to $H_\et^*(k,\cO)$
is the free left $H_\et^*(k(\zeta),\F)$-module
$H_\et^*(k(\zeta),\F)\otimes \tilde{H}^*_\top(K_n)$.

The operations $P^I$ send $H_\et^n(-,\muell{i})$ to $H_\et^*(-,\muell{i})$.
Thus the operations from $H_\et^n(-,\muell{i})$ to $H_\et^*(-,\muell{i+j})$
are isomorphic to $H_\et^*(-,\muell{j})\otimes\tilde{H}^*_\top(K_n)$.
\end{thm}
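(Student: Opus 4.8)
The plan is to reduce the general statement to the known topological classification via the Shapiro-type isomorphism \eqref{eq:Shapiro}, exactly as was done for $n=1$ in Proposition \ref{Het1-ops}. The key observation is that the simplicial Eilenberg--Mac\,Lane scheme $K=K(\muell{i},n)$ basechanges along $i\colon\Spec(k(\zeta))\to\Spec(k)$ to $K(\F,n)$ over $k(\zeta)$, since $\muell{i}(\zeta)\cong\F(\zeta)$; and $\cO=Ri_*\F$. Therefore, by the projection formula / Shapiro's Lemma in the form of \eqref{eq:Shapiro}, we get
\[
H_\et^*(K,\cO)\cong H_\et^*(K(\zeta),\F)\cong H_\et^*(K(\F,n)_{k(\zeta)},\F).
\]
So the ring of operations $H_\et^n(-,\muell{i})\to H_\et^*(-,\cO)$ is identified with the ring of \'etale operations on $H_\et^n(-,\F)$ over the field $k(\zeta)$, which \emph{does} contain a primitive $\ell^{th}$ root of unity, hence is covered by Theorem \ref{BJ}: it is the twisted tensor product $H_\et^*(k(\zeta),\F)\oo H^*_\top(K_n)$.

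The second step is to unwind this identification to recover the stated structure. First, I would check that the isomorphism above is one of \emph{rings}, not just of $H_\et^*(k(\zeta),\F)$-modules: this follows because $\cO$ is a sheaf of rings, $Ri_*$ is lax monoidal, and the Corollary to Theorem \ref{Kunneth} already records that the relevant comparisons respect products. Next, I would identify which operations land where under the $\Z/d$-grading. By Theorem \ref{etale-exist} the operations $P^a$ multiply the weight by $\ell$, and $\muell{i}\cong\muell{i\ell}$ via a canonical isomorphism; so each $P^I$ (and $\beta$) sends $H_\et^n(-,\muell{i})$ to $H_\et^*(-,\muell{i})$, i.e.\ preserves the weight mod $d$. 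Thus $\tilde H^*_\top(K_n)$ (the augmentation ideal plus unit — the reduced/unstable Cartan ring generated by the $P^I(\iota_n)$ with $e(I)<n$, or $e(I)=n,\epsilon_0=1$) sits inside the weight-$0$ part over $k$, while the coefficient contribution in weight $j$ is $H_\et^*(k,\muell{j})$. Summing over $j$ and using $\oplus_j H_\et^*(k,\muell{j})\cong H_\et^*(k(\zeta),\F)$ (the $j=0$ through $d-1$ pieces being the isotypic components) gives precisely $H_\et^*(k(\zeta),\F)\oo\tilde H^*_\top(K_n)$ as a left $H_\et^*(k(\zeta),\F)$-module, and restricting to a single target weight $i+j$ gives the last sentence: operations $H_\et^n(-,\muell{i})\to H_\et^*(-,\muell{i+j})$ form $H_\et^*(-,\muell{j})\oo\tilde H^*_\top(K_n)$.

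The module-freeness and the explicit basis statement come directly from Theorem \ref{BJ} applied over $k(\zeta)$: the monomials in the $P^I$ and $\beta P^I$ with $I$ admissible of excess $<n$ (together with $\iota_n$, $\beta\iota_n$ for the boundary-excess cases) form a free basis over $H_\et^*(k(\zeta),\F)$, and this basis is manifestly defined over $k$ — the operations $P^I$ themselves are defined over $k$ by Theorem \ref{etale-exist} and Remark \ref{natural-etale} — so no base-change of the basis elements is needed, only of the coefficient ring. The twisted multiplication is governed by the Cartan relation $P^a\alpha=\sum P^i(\alpha)P^j$ for $\alpha\in H_\et^*(k(\zeta),\F)$, exactly as in the discussion preceding Theorem \ref{BJ}.

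I expect the main obstacle to be \emph{bookkeeping the Galois descent and the weight grading simultaneously}: one must be careful that the decomposition $H_\et^*(X(\zeta),\F)=\bigoplus_i H_\et^*(X,\muell{i})$ is compatible with the ring structure in the twisted sense (the product of the $\muell{i}$- and $\muell{j}$-isotypic pieces lands in the $\muell{i+j}$-piece, matching $\muell{i}\oo\muell{j}\cong\muell{i+j}$), and that the operations $P^I$ act diagonally with respect to this grading while the coefficients $c\in H_\et^*(k,\muell{j})$ do not. A secondary point requiring care is verifying that the comparison isomorphism of Theorem \ref{BJ} over $k(\zeta)$ really is $G$-equivariant for $G=\mathrm{Gal}(k(\zeta)/k)$, so that taking $G$-invariants (equivalently, restricting to $k$) recovers the $k$-rational operations; this is where Remark \ref{et-top}–style naturality of the Epstein construction is used. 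Neither obstacle is deep, but the indexing must be handled precisely to state the $\Z/d$-graded conclusion cleanly.
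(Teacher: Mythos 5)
Your proposal is correct and follows essentially the same route as the paper: identify the basechange of $K(\muell{i},n)$ to $k(\zeta)$ with the constant Eilenberg--Mac\,Lane sheaf, use the Shapiro isomorphism \eqref{eq:Shapiro} to convert $\cO$-coefficient cohomology of $K$ over $k$ into $\F$-coefficient cohomology over $k(\zeta)$, and then invoke the Breen--Jardine Theorem \ref{BJ}. The paper is terser (it justifies the basechange claim by induction on the bar construction and leaves the weight-grading bookkeeping implicit via the isotypical decomposition following \eqref{eq:Shapiro}), but your extra care with the $\Z/d$-grading and Galois equivariance is only an elaboration of the same argument, not a different one.
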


\begin{proof}
We first show that the basechange $K(\muell{i},n)\times_k\Spec(k(\zeta))$
is the space $K(\muell{i},n)$ over $k(\zeta)$.
This is clear for $n=0$, and follows inductively from the construction
of $K(A,n+1)$ via the bar construction on $K(A,n)$, together with the 
observation that $(X\times_k Y)\times_k\Spec(k(\zeta))$
is $X(\zeta)\times_{k(\zeta)}Y(\zeta)$.

By \eqref{eq:Shapiro}, the cohomology of $K(\muell{i},n)$ with 
coefficients in $\cO$ is the same as the cohomology of 
$K(\muell{i},n)\times_k\Spec(k(\zeta))$ with coefficients in $\F$. 
The Breen-Jardine result, Theorem \ref{BJ},
shows that this is $H_\et^*(k(\zeta),\F)\otimes H_\top^*(K_n)$.
\end{proof}

\bigskip
\section{Motivic Steenrod operations}\label{sec:Amotivic}

In this section we construct operations $P^a$ on the motivic cohomology
groups $H^{n,i}(X)=H^{n,i}(X,\F)$, $n\ge2a$,
compatible with the operations $P^a$ in \'etale cohomology 
in the sense that there are commutative diagrams
\begin{equation}\label{eq:compatible}
\begin{CD}
H^{n,i}(X,\F) @>P^a>> H^{n+2a(\ell-1),i\ell}(X,\F) \\
@VVV @VVV \\
H_\et^n(X,\muell{i}) @>P^a>>
    H_\et^{n+2a(\ell-1)}(X,\muell{i\ell}).
\end{CD}\end{equation}

Let $\alpha_*$ denote the direct image functor from the \'etale site to the
Nisnevich site. If $\cF$ is any \'etale sheaf then we may regard 
$A=R\alpha_*\cF$ as a complex of Nisnevich sheaves such that
$H_\nis^*(X,R\alpha_*\cF)\cong H_\et^*(X,\cF)$.

The following theorem, due to Voevodsky and Rost, is sometimes known
as the Beilinson Conjecture; it is equivalent to the Norm Residue Theorem;
see \cite{SV00}, \cite{V-MC/l}, \cite{W09}), \cite{HW}).
Let $\tau^{\le i}A$ denote the good truncation of $A$ in cohomological
degrees at most $i$; $H^n(\tau^{\le i}A)$ 
is $H^n(A)$ for $i\le n$, and zero for $n>i$.
(cf.\ \cite[1.2.7]{WHomo}).

\begin{NormResidueThm}\label{NRT}
For any $X$ smooth over a field of characteristic $\ne\ell$,
the map $\F(i)\to \tau^{\le i}R\alpha_*\muell{i}$
is a quasi-isomorphism, and hence
\[
H^{n,i}(X,\F)\cong H_\nis^n(X,\tau^{\le i}R\alpha_*\muell{i}).
\]
In particular, if $n\le i$ then $H^{n,i}(X,\F)\cong H_\et^n(X,\muell{i})$.
\end{NormResidueThm}

\smallskip
Explicitly, if $\cF$ is an \'etale sheaf and
$\cF\to I^*$ is an injective resolution, 
then $\tau^{\le i}R\alpha_*\cF$ is represented by 
$\tau^{\le i}\alpha_*(I^*)$. It is quasi-isomorphic to a chain complex 
$I_\nis^*$ of injective Nisnevich sheaves on $X$ with $I_\nis^n=\alpha_*I^i$ 
for $n\le i$, because each $\alpha_*I^n$ is an injective Nisnevich sheaf
and $Z^n(\alpha_*I)$ injects into $\alpha_*I^n$. Taking $\cF=\muell{i}$, 
the theorem states that $H^{n,i}(X,\F)=H_\nis^n(X,\F(i))$ is the 
$n^{th}$ cohomology of $I_\nis^*(X)$.

\smallskip
Now the product on $\muell{i}$ yields a product
$R\alpha_*\muell{i}\otimes R\alpha_*\muell{j}\to R\alpha_*\muell{i+j}\!,$ 
unique up to chain homotopy, by the comparison theorem. 
The induced pairing $\tau^{\le i}R\alpha_*\muell{i}\oo
\tau^{\le j}R\alpha_*\muell{j}\to \tau^{\le i+j}R\alpha_*\muell{i+j}$
induces the product in motivic cohomology, by \cite[7.1]{SV00}.
We may choose a model for the $\ell$-fold product map
$(R\alpha_*\muell{i})^{\oo\ell}\to R\alpha_*\muell{i\ell}$
which is equivariant for the permutation action of the cyclic group $\pi$,
by choosing a $\pi$-equivariant replacement 
$R\alpha_*\muell{i\ell}\map{\sim}J$ and factoring through an
equivariant map $(R\alpha_*\muell{i})^{\oo\ell}\to J$.
The truncation of this map is also equivariant:
\[
(\tau^{\le i}R\alpha_*\muell{i})^{\oo\ell} \to
\tau^{\le i\ell}(R\alpha_*\muell{i})^{\oo\ell} 
\to \tau^{\le i\ell}R\alpha_*\muell{i\ell}.
\]

Setting $A=\oplus_{i=0}^\infty \tau^{\le i}R\alpha_*\muell{i}$,
we have a graded map $A^{\otimes\ell}\map{m} A$, representing the 
product in motivic cohomology.
Composing the power map $P$ of Definition \ref{def:powermap}, the map $m$ 
and the isomorphism of Theorem \ref{Kunneth}, 
we have a graded reduced power map on $H^{n,i}(X,A)=\oplus H^{n,i}(X,\F)$:
\[
H^{n}(X,A)\map{P} H_\pi^{n\ell}(X,A^{\otimes\ell})\ \map{m_*}\  
H_\pi^{n\ell}(X,A) \cong  \oplus H^{n\ell-k}(X,A)\otimes  H^k(\pi,\F).
\]

\begin{defn}[$\mathbf P^a$]\label{motivic-exist}
The function $P^a: H^{n,i}(X) \to H^{n+2a(\ell-1),i\ell}(X)$
is defined as follows. Given $u\in H^{n,i}(X)$, $m_*P(u)$ has an expansion
$\sum D_k(u)\otimes w_k$, where $w_k\in H^k(\pi,\F)$ are as before.
If $\ell\ne2$ and $n\ge2a$, we define $P^au$ to be $(-1)^a\nu_n$ times
$D_{(n-2a)(\ell-1)}u$, where the constant $\nu_n$ is given by 
the formula in \eqref{eq:P=D}. 
If $n<2a$ we define $P^a=0$.
By Lemma \ref{P^a_additive}, each $P^a$ is in fact a homomorphism.

We call the $P^a$ {\it motivic cohomology operations;}
they 
are natural in $X$, by the argument of
Remark \ref{natural-etale} applied to the power map on $H^{n,i}(X,A)$. 

If $\ell=2$ we define $Sq^a:H^{n,i}(X)\to H^{n+a,2i}(X)$ to be 
$D_{n-a}$ for $n\ge a$, and $Sq^a=0$ for $n<a$. This follows
Steenrod and Epstein. Thus $Sq^{2a}=P^a$. 
We will show in Theorem \ref{Bock-motivic} below that 
$Sq^{2a+1}=\beta Sq^{2a}$.
\end{defn}\goodbreak

\begin{subrem}
These motivic cohomology operations are almost surely the operations
defined by Kriz and May in \cite[I.7.2]{KM}, and by Joshua in 
\cite[\S8]{Joshua}; cf.\ \cite{BJ}.
\end{subrem}

\begin{lem}\label{lem:mot-et}
The motivic cohomology operations $P^a$ are compatible with the
\'etale cohomology operations $P^a$ 
in the sense that the diagram \eqref{eq:compatible} commutes.
\end{lem}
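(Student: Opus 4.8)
The plan is to trace the definition of each $P^a$ back through the reduced power map $P$ of Definition \ref{def:powermap} and exploit the naturality of that construction with respect to maps of the underlying left-exact functors and the underlying algebra objects. The key observation is that the motivic $P^a$ is built from the complex $A=\bigoplus_i \tau^{\le i}R\alpha_*\muell{i}$ of Nisnevich sheaves, while the \'etale $P^a$ is built from $\cO=\bigoplus_i\muell{i}$ viewed as a complex of \'etale sheaves; and there is a canonical comparison map $A \to R\alpha_*\cO$ of graded algebra objects (the truncation map, inverting $\alpha_*$), under which the Norm Residue Theorem \ref{NRT} identifies $H^{n,i}(X,A)$ with a summand of $H_\et^n(X,\cO)$ in the relevant range. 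So the strategy is: show the power maps $P$ on both sides are intertwined by this comparison map, then push through $m_*$ and the K\"unneth isomorphism of Theorem \ref{Kunneth}, and finally read off the $D_k$ components.

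First I would make the comparison map explicit on the chain level. Pick an injective resolution $\muell{i\ell}\to I^*$ of \'etale sheaves; then $R\alpha_*\muell{i\ell}$ is modeled by $\alpha_* I^*$, and $\tau^{\le i\ell}\alpha_* I^*$ is a sub-complex realizing $\tau^{\le i\ell}R\alpha_*\muell{i\ell}$. For the $\ell$-fold product, choose a $\pi$-equivariant injective replacement $J$ of $(R\alpha_*\muell{i})^{\oo\ell}$ over the \'etale site together with an equivariant factorization $(R\alpha_*\muell{i})^{\oo\ell}\to J \to R\alpha_*\muell{i\ell}$; the uniqueness up to equivariant chain homotopy from the comparison theorem \cite[2.3.7]{WHomo} means that \emph{any} two such models agree on cohomology, so I am free to build the motivic model by applying $\tau^{\le}$ to the \'etale one. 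Then the defining cocycle-level formula $u\mapsto \hat m(u^{\oo\ell})$ for the motivic power map is literally the restriction of the \'etale one to cocycles of degree $\le i$ (which is exactly the range where $I_\nis^n=\alpha_*I^n$, per the discussion following Theorem \ref{NRT}), so the square
\[
\begin{CD}
H^{n,i}(X,A) @>P>> H_\pi^{n\ell}(X,A^{\oo\ell}) \\
@VVV @VVV \\
H_\et^n(X,\cO) @>P>> H_\pi^{n\ell}(X,\cO^{\oo\ell})
\end{CD}
\]
commutes by construction. Composing with $m_*$ and the isomorphism of Theorem \ref{Kunneth} — which is natural in the algebra object, by the same universal-property argument as in Lemma \ref{P^a_natural} — shows that the full expansions $m_*P(u)=\sum D_k(u)\oo w_k$ on the two sides correspond term by term, since the dual basis elements $w_k\in H^k(\pi,\F)$ are the same in both settings. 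Hence each component $D_k$, and therefore each $P^a u=(-1)^a\nu_n D_{(n-2a)(\ell-1)}u$, is carried to its \'etale counterpart; the normalizing constants $\nu_n$ depend only on $n$ and $\ell$, so they match automatically. This gives the commuting square \eqref{eq:compatible}, and the $Sq^a$ case is identical.

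The main obstacle is bookkeeping the truncations: one has to check that restricting everything in sight to $\tau^{\le}$ is compatible with the multiplicative and equivariant structure, i.e.\ that the motivic power map really is the $\tau^{\le}$-restriction of the \'etale one and not merely a map that agrees with it after some homotopy. This is handled by the uniqueness clause in the comparison theorem together with the observation (already recorded after Theorem \ref{NRT}) that the truncated complex can be taken to be injective Nisnevich sheaves agreeing with $\alpha_*I^*$ in low degrees, so the cocycle-level formulas literally coincide in the range $n\le i$; everything in the definition of $P^a$ only involves cohomology classes and their cocycle representatives in that range, so no degree is lost. Once the chain-level identification is pinned down, the rest is formal naturality of $P$, of $m_*$, and of the K\"unneth isomorphism.
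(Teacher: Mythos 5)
Your proposal is correct and takes essentially the same route as the paper: the paper's proof simply notes that the square of multiplication maps $(\tau^{\le i}R\alpha_*\muell{i})^{\oo\ell}\to\tau^{\le i\ell}R\alpha_*\muell{i\ell}$ sitting over $(R\alpha_*\muell{i})^{\oo\ell}\to R\alpha_*\muell{i\ell}$ commutes by construction and then invokes the naturality of $P^a$ in the map $A^{\oo\ell}\to B$ (Lemma \ref{P^a_natural}). Your chain-level bookkeeping with injective resolutions, the comparison theorem, and the K\"unneth isomorphism is just an unwinding of that same naturality statement, so the argument goes through as you describe.
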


\begin{proof}
By construction, the following diagram commutes:
\begin{equation*}
\begin{CD}
(\tau^{\le i}R\alpha_*\muell{i})^{\oo\ell} @>m>> 
 \tau^{\le i\ell}R\alpha_*\muell{i\ell} \\ @VVV @VVV \\
(R\alpha_*\muell{i})^{\oo\ell} @>m>> R\alpha_*\muell{i\ell}.
\end{CD}\end{equation*}
The commutativity of \eqref{eq:compatible} now follows 
from Lemma \ref{P^a_natural}.
\end{proof}

\goodbreak
We now show that these operations enjoy familiar properties.
\smallskip

\begin{prop}\label{l-power}
If $u\in H^{2n,i}(X)$ then $P^n(u)=u^\ell$. 
\end{prop}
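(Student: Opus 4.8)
The plan is to reduce the claim $P^n(u)=u^\ell$ for $u\in H^{2n,i}(X)$ to the already-established analogue at cochain level, using the internal power map of Definition \ref{def:powermap} together with May's chain-level operations from Section \ref{sec:theta}. By Lemma \ref{dP-Pd}(ii) and Corollary \ref{PQagree}, if $u$ is represented by a cocycle $\tilde u\in I^{2n}(X)$ (where $I^*$ is the chosen injective resolution computing $H^{*,*}(X,A)$ from Definition \ref{motivic-exist}), then $P^n(u)$ is represented by the cocycle $(-1)^n\nu_{2n}D_0(\tilde u)$, where $D_0(\tilde u)$ is the degree-$0$ component in the expansion $\hat m(\tilde u^{\oo\ell})=\sum w_k\oo D_k(\tilde u)$. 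So the heart of the matter is to identify $D_0(\tilde u)$ with the $\ell$-fold cup product $\tilde u^\ell$ up to the sign that cancels the factor $(-1)^n\nu_{2n}$.

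First I would recall, exactly as in the classical topological case (Steenrod--Epstein \cite{SE}, VII.4.2, or \cite{May} Section 2), that the bottom operation $D_0$ is computed by the restriction of the equivariant chain approximation $\theta$ of \eqref{def:theta} to the $0$-skeleton of $C_*$: since $e_0$ spans $C_0\cong\Z[\pi]$ and $\phi(w_0\oo x)(e_0)=x$, we get $D_0(\tilde u)=\theta(e_0\oo\tilde u^{\oo\ell})$. But $\theta$ restricted to $C_0\oo I^{\oo\ell}(X)$ is, by construction, chain homotopic to the multiplication map $m_*$ applied to the $\ell$-fold external product — this is precisely the statement that $\theta$ is an equivariant extension of the (non-equivariant) iterated product $I^{\oo\ell}(X)\to I^*(X)$. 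Hence $D_0(\tilde u)=\tilde u^\ell$ as a cocycle in $I^{2n\ell}(X)$, and passing to cohomology gives $D_0(u)=u^\ell$ in $H^{2n\ell,i\ell}(X,A)$, which lands in the summand $H^{2n\ell,i\ell}(X,\F)$.

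It then remains to check that the scalar $(-1)^n\nu_{2n}$ equals $1$ in $\F=\mathbb F_\ell$. Plugging $n\mapsto 2n$ into the formula of \eqref{eq:P=D}: $r=\tfrac{(\ell-1)(4n^2+2n)}{4}=\tfrac{(\ell-1)n(2n+1)}{2}$, and $\nu_{2n}=(-1)^r\bigl(\tfrac{\ell-1}{2}\bigr)!^{-2n}$. By Wilson's theorem (or rather the standard computation, cf.\ \cite{SErr}) one has $\bigl(\tfrac{\ell-1}{2}\bigr)!^{\,2}\equiv(-1)^{(\ell+1)/2}\pmod\ell$, so $\bigl(\tfrac{\ell-1}{2}\bigr)!^{-2n}\equiv(-1)^{n(\ell+1)/2}$. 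Combining, $(-1)^n\nu_{2n}\equiv(-1)^{n+r+n(\ell+1)/2}$, and a short parity count (using that $\ell$ is odd, so $\ell-1$ is even and $r$ is even when $n$ is even, etc.) shows the exponent is even; this is the same arithmetic that makes the topological identity $P^n x=x^\ell$ hold on $H^{2n}_\top$, and indeed I would simply cite that the sign bookkeeping was already done by Steenrod and Epstein — the point being that our $D_0$, $\nu_n$, and $P^a$ are defined by literally the same formulas.

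The main obstacle is not conceptual but bookkeeping: one must be careful that the chain-level identification $D_0(\tilde u)=\tilde u^{\ell}$ uses the \emph{same} model of the product $m$ on $A=\oplus\tau^{\le i}R\alpha_*\muell{i}$ that was used to define the multiplication on motivic cohomology (via \cite[7.1]{SV00}) and the \emph{same} equivariant lift used in Definition \ref{motivic-exist}; since both were fixed via the comparison theorem up to equivariant chain homotopy, and $D_0$ only depends on that homotopy class (by \cite[5.1.3]{Epstein} and Lemma \ref{dP-Pd}(i)), this causes no trouble, but it should be stated explicitly. I expect the whole proof to be three or four lines in the paper: cite Lemma \ref{dP-Pd}(ii) to pass to cochains, observe $D_0=$ cup-$\ell$ by definition of the chain approximation $\theta$, and invoke the classical sign computation. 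Alternatively — and perhaps this is what the authors do — one can avoid chains entirely: the $\ell$-fold external power lands in $H^{2n\ell}_\pi$ with a $\pi$-action that is trivial on the relevant class (as $(-1)^{2n(\ell-1)}=1$), its image under $m_*$ in the isomorphism of Theorem \ref{Kunneth} has only a $w_0$-component equal to $u^\ell$, and \eqref{eq:P=D} with $n\mapsto 2n$, $a=n$ picks out exactly that component with the normalizing sign, which is $1$.
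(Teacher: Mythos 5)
Your argument is correct, and the alternative you sketch in your last sentence is in fact the paper's proof: the paper stays entirely at the level of cohomology, quoting Epstein's result \cite[5.2.1]{Epstein} that the restriction $j_*:H^*_\pi(X,R)\to H^*(X,R)$ sends $Pu$ to the external power $u\times\cdots\times u$, and then invoking \cite[6.3, 7.3]{Epstein}, which identify the $w_0$-component $D_0(u)$ of $m_*P(u)$ with $u^\ell$ and absorb the normalizing constant. Your main route instead descends to cochains via May's $\theta$ (Corollary \ref{PQagree} and Lemma \ref{dP-Pd}) and verifies $(-1)^n\nu_{2n}=1$ by hand; that arithmetic is right (with $m=(\ell-1)/2$ one gets $r\equiv mn\pmod 2$, $(m!)^{-2n}\equiv(-1)^{n(m+1)}$ by Wilson, so the total exponent is $2n+2mn$, even), and it buys an explicit check of the constant that the paper delegates to Epstein/Steenrod, at the cost of more chain-level bookkeeping than is needed. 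Two small points to tighten: $D_0(\tilde u)$ equals $\tilde u^{\ell}$ only up to a coboundary, since the restriction of $\theta$ to $e_0\otimes-$ is merely chain homotopic to the chosen product on $I^*(X)$ --- which is all you need, as you pass to cohomology anyway; and in your closing sketch, $m_*P(u)$ does not have ``only'' a $w_0$-component (its other components are the various $D_k(u)$, which produce the other $P^a$) --- the correct statement, and the one the paper uses, is that restriction along the trivial subgroup kills $w_k$ for $k>0$ and hence identifies the $w_0$-component with $u^\ell$.
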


\begin{proof}
By \cite[5.2.1]{Epstein}, $j_*:H^*_\pi(X,R)\to H^*(X,R)$ sends
$Pu$ to $u\times\cdots\times u$. Since $w_0=1$, the proof 
in \cite[6.3, 7.3]{Epstein} applies.
\end{proof}

\begin{lem}\label{E4.4.3}
Let $\pi$ and $\rho$ be finite subgroups of $\Sigma_\ell$.
Then their action on $R^{\otimes\ell}$ induces a commutative diagram
(whose vertical maps come from Theorem \ref{Kunneth}):
\[\begin{CD}
H^*(\pi)\oo H^*(\rho)\oo H^*(X,R)\oo H^*(X,R) @>>>
H^*(\pi\times\rho)\oo H^*(X,R\oo R) \\
@V{\cong}VV @VV{\cong}V \\
H^*_\pi(X,R)\oo H^*_\rho(X,R) @>>> 
H^*_{\pi\times\rho}(X,R\oo R).
\end{CD}\]
\end{lem}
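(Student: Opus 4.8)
\textbf{Proof proposal for Lemma \ref{E4.4.3}.}
The plan is to reduce the statement to the K\"unneth/Shapiro machinery already set up in Theorem \ref{Kunneth}, by carefully tracking injective resolutions and the Eilenberg--Zilber shuffle map. First I would fix an injective resolution $R\map{\sim}I^*$ in $\cA$, so that $H^*(X,R)$ is computed by $I^*(X)$, and recall that for the cyclic/symmetric subgroup $\pi\le\Sigma_\ell$ one computes $H^*_\pi(X,R^{\oo\ell})$ using the standard periodic (or bar) resolution $C_*^\pi\to\Z$ of $\Z[\pi]$, as in the proof of Theorem \ref{Kunneth}: namely $\Tot(C^*_\pi\oo I^*)$, with $C^*_\pi=\Hom(C_*^\pi,\F)$, is an injective replacement for $R^{\oo\ell}$ in $\cA[\pi]$ (this uses that each $C_\pi^n$ is free of finite rank over $\F[\pi]$, exactly the argument already given). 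Doing the same for $\rho$ and for $\pi\times\rho$, and using $C_*^{\pi\times\rho}\simeq C_*^\pi\oo C_*^\rho$ (the tensor product of resolutions resolves $\Z[\pi\times\rho]$), the whole square becomes, after applying the K\"unneth identifications, a statement about the total complex of
\[
(C^*_\pi)^\pi\oo(C^*_\rho)^\rho\oo I^*(X)\oo I^*(X)
\longrightarrow
(C^*_{\pi\times\rho})^{\pi\times\rho}\oo I^*(X\times X \text{-type term}),
\]
i.e.\ about compatibility of the external product maps. Here the horizontal arrows are induced by the external (cup) product $R\oo R$-structure together with the identity $H^*(\pi)\oo H^*(\rho)\cong H^*(\pi\times\rho)$, so the square is the assertion that ``external product on the $I^*(X)$ factors'' commutes with ``external product on the $C^*$ factors,'' which is formal once everything is expressed on the chain level.

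The key steps, in order: (1) choose the resolutions $I^*$, $C_*^\pi$, $C_*^\rho$ and take $C_*^{\pi\times\rho}=C_*^\pi\oo C_*^\rho$; (2) identify all four corners on the chain level via Theorem \ref{Kunneth} and its corollary, so each vertical isomorphism is realized by an explicit quasi-isomorphism of complexes; (3) write down the top horizontal map as the composite of the K\"unneth shuffle isomorphism $H^*(\pi)\oo H^*(\rho)\map{\cong}H^*(\pi\times\rho)$ with the map on $H^*(X,R)\oo H^*(X,R)$ induced by the multiplication $m:R\oo R\to R$ (or simply the external pairing into $R\oo R$ as the statement is phrased), and the bottom horizontal map as the analogous map after applying $H^*_{(-)}(X,-)$; (4) check commutativity on the chain level, which amounts to the statement that the Eilenberg--Zilber map $C_*^\pi\oo C_*^\rho\to C_*^{\pi\times\rho}$ and the multiplication on $I^*$ interleave with the correct Koszul signs --- this is the standard ``product of products is the product'' compatibility, and it is exactly the sheaf-cohomology analogue of \cite[4.4.3]{Epstein}. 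Indeed the cleanest route is simply to invoke \cite[4.4.3]{Epstein}: Epstein proves precisely this multiplicativity statement in his abstract framework of an $\F$-linear tensor abelian category with a left-exact global-sections functor, and our setting (\'etale or Nisnevich sheaves of $\F$-modules, or the complex $A=\oplus\tau^{\le i}R\alpha_*\muell{i}$) is an instance of it.

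The main obstacle I anticipate is bookkeeping the signs and the symmetry isomorphisms: the functor $R\mapsto R^{\oo\ell}$ carries a $\Sigma_\ell$-action with the ``usual sign change'' (as recalled before Definition \ref{def:powermap}), and the external product $H^*_\pi(X,R)\oo H^*_\rho(X,R)\to H^*_{\pi\times\rho}(X,R\oo R)$ mixes the permutation signs on the $C^*$ side with the Koszul signs on the $I^*$ side; one must verify these match under the isomorphism $\phi$ of Section \ref{sec:theta}. However, since all the signs are the ones dictated by the total-complex conventions already in force (and since they evaporate mod $\ell$ for the even-degree classes $w_{2i}$ that matter in applications), this is routine rather than conceptual. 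Accordingly I would state the proof briefly, indicating the resolution choices and the chain-level identifications, and then cite \cite[4.4.3]{Epstein} for the verification of commutativity, exactly as Theorem \ref{Kunneth} itself was handled.
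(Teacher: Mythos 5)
Your proposal is correct and in the end takes the same route as the paper: the paper's entire proof is the observation that Epstein's Lemma 4.4.3, being a statement about the underlying chain complexes, goes through in this setting. Your additional chain-level scaffolding (explicit resolutions, $C_*^{\pi\times\rho}=C_*^\pi\oo C_*^\rho$, sign bookkeeping) is a sound elaboration of that same citation rather than a different argument.
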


\begin{proof}
The proof of Lemma 4.4.3 of \cite{Epstein}, which concerns the underlying
chain complexes rather than the cohomology groups, goes through.
\end{proof}

\begin{thm}[Cartan Formula]\label{Cartan}
Let $u\in H^{n,i}(X)$ and $v\in H^{m,j}(Y)$. 
Then in $H^{*,(i+j)\ell}(X\times Y)$ we have:
\[
P^a(u \cup v) = \sum_{s+t=a} P^s(u) \cup P^t(v),\quad \ell>2,
\]
and $Sq^a(u \cup v) = \sum_{s+t=a} Sq^s(u) \cup Sq^t(v)$ when $\ell=2$.
\end{thm}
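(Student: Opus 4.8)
The plan is to deduce the motivic Cartan formula from the chain-level multiplicativity of the Steenrod--Epstein power construction, exactly as in Epstein's \cite[Ch.~7]{Epstein} and Steenrod--Epstein \cite[VII]{SE}, transported across the Norm Residue Theorem. First I would set up the external product: for the graded complex $A=\oplus_i\tau^{\le i}R\alpha_*\muell{i}$ on $X$ and the analogous complex $B$ on $Y$, there is an external pairing on $X\times Y$ whose effect on cohomology is the external product $H^{n,i}(X)\oo H^{m,j}(Y)\to H^{n+m,i+j}(X\times Y)$; internal cup products are then recovered by pulling back along the diagonal, so it suffices to prove the formula for the external product and then apply naturality in $X$ (Definition \ref{motivic-exist}). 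This reduces the statement to a K\"unneth-type identity for the power map $P$ of Definition \ref{def:powermap}.

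The key step is the compatibility of $P$ with external products. Given cocycles $u\in I^n(X)$ and $v\in I^m(Y)$ representing classes in $H^{n,i}(X)$ and $H^{m,j}(Y)$, the product cocycle $u\times v$ satisfies $(u\times v)^{\oo\ell} = \pm\, \sigma\cdot(u^{\oo\ell}\times v^{\oo\ell})$ inside $(I(X)\oo I(Y))^{\oo\ell}$, where $\sigma$ is the shuffle permutation in $\Sigma_{2\ell}$ moving the tensor factors into the two blocks; crucially $\sigma$ restricts, on the cyclic subgroup $\pi\subset\Sigma_\ell$ acting diagonally, to the diagonal inclusion $\pi\into\pi\times\pi\subset\Sigma_\ell\times\Sigma_\ell$. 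Applying the multiplication maps $m$ and the K\"unneth isomorphism of Theorem \ref{Kunneth}, together with Lemma \ref{E4.4.3} (which records precisely that $H^*_\pi$ of an external product factors through $H^*_{\pi\times\rho}$ compatibly with the K\"unneth decomposition of $H^*(\pi\times\rho)$), we get
\[
m_*P(u\times v)=\Delta^*\big(m_*P(u)\times m_*P(v)\big)
\]
in $H^*_\pi(X\times Y, A)\cong\oplus_k H^{*}(X\times Y,A)\oo H^k(\pi)$, where $\Delta:\pi\to\pi\times\pi$ is the diagonal. Expanding $m_*P(u)=\sum D_a(u)\oo w_a$ and $m_*P(v)=\sum D_b(v)\oo w_b$, and using that $\Delta^*$ applied to the class $w_a\oo w_b\in H^*(\pi)\oo H^*(\pi)\cong H^*(\pi\times\pi)$ is the product $w_aw_b$ in $H^*(\pi)$, one reads off $D_k(u\times v)=\sum_{a+b=k}\pm D_a(u)\times D_b(v)$; this is the content of Epstein's \cite[Ch.~6--7]{Epstein}, and it holds verbatim here because every ingredient (the injective replacements, the power map, Theorem \ref{Kunneth}, Lemma \ref{E4.4.3}) is available in our setting by construction.

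Finally I would convert the identity on the $D_k$ into the identity on $P^a$ (and on $Sq^a$ when $\ell=2$) by tracking the normalizing constants. Writing $k=(n+m-2a)(\ell-1)$ and splitting as $k=(n-2s)(\ell-1)+(m-2t)(\ell-1)$ with $s+t=a$, one checks the multiplicativity of the signs and of the factor $\nu_n\nu_m=\pm\nu_{n+m}$ in \eqref{eq:P=D}; this bookkeeping is exactly the computation in \cite[VII.3--4]{SE} and \cite[7.3]{Epstein} and I would simply cite it rather than reproduce it. For $\ell=2$ there are no signs or $\nu$'s and the formula $Sq^k=D_{n+m-k}$ gives the Cartan relation immediately from $D_k(u\times v)=\sum_{a+b=k}D_a(u)\times D_b(v)$. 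Pulling back along the diagonal $X\to X\times X$ (for internal products) completes the proof. The main obstacle is the very first bookkeeping point: verifying that the permutation $\sigma$ comparing $(u\times v)^{\oo\ell}$ with $u^{\oo\ell}\times v^{\oo\ell}$ does restrict correctly to the diagonal $\pi\into\pi\times\pi$ and contributes the expected sign $(-1)^{m n\ell(\ell-1)/2}$ (which is trivial mod $\ell$ for odd $\ell$ since $\ell-1$ is even, and absent for $\ell=2$); once that is pinned down, everything else is formal or a citation.
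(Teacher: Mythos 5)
Your proposal is correct and follows essentially the same route as the paper: the paper likewise reduces to Epstein's proof of the Cartan formula, substituting Lemma \ref{E4.4.3} for \cite[4.4.3]{Epstein} to obtain $D_a(u\cup v)=\sum_{s+t=a}\pm D_s u\cup D_t v$ (with $s,t$ not both odd) and then finishing by the sign and $\nu_n$ bookkeeping from \cite{SE} and \cite{Epstein}. Your added detail on the diagonal $\pi\into\pi\times\pi$ and the shuffle sign is exactly the content of that cited argument.
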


\begin{proof}
Epstein's proof in \cite[7.2]{Epstein} carries over. In more detail,
replacing \cite[4.4.3]{Epstein} by our Lemma \ref{E4.4.3} in the proof
of Lemma 7.2.2 in \cite{Epstein}, we obtain the formulas
$$
D_{a}(u \cup v)=\sum_{s+t=a}\pm D_{s}u\cup D_{t}v, 
$$
where $s$ and $t$ cannot both be odd.
By \cite[6.4]{Epstein}, if $n$ is even (resp., odd) then 
$D_s(u)=0$ unless $s$ is $m(\ell-1)$ or $m(\ell-1)-1$ 
for some even (resp., odd) integer $m\ge0$.
The Cartan Formula follows by inspection of the signs involved.
\end{proof}

\begin{subrem}
Epstein also establishes a Cartan formula for the operations $Q^a$,
including the formula $Q^0(uv)=(Q^0u)(P^0v)+(-1)^{|u|}(P^0u)(Q^0v)$.
We omit these formulas, as they follow from Theorem \ref{Cartan}
and the formulas for $Q^a$ in Theorem \ref{Bock-motivic} below.
\end{subrem}

We now turn to the Adem relations. Recall that by convention
$\choose{n}{k}$ is zero if $k<0$. Thus the sums below run over $t\le a/\ell$.

\begin{thm}[Adem Relations]\label{Adem}
If $\ell>2$ and $a<b\ell$ then
\begin{align*}
P^aP^b =& \sum_{s+t=b} (-1)^{a+t} \choose{(\ell-1)s-1}{a-t\ell}\ P^{a+s}P^t;
\\      P^a \beta P^b = &
\sum_{s+t=b} (-1)^{a+t}  \choose{(\ell-1)s}{a-t\ell}\ \beta P^{a+s}P^t +
(-1)^{a+t} \choose{(\ell-1)s-1}{a-t\ell-1}\ \beta P^{a+s}\beta P^t.
\end{align*}
\end{thm}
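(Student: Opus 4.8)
The strategy is to reduce the motivic Adem relations to Epstein's \'etale Adem relations, which are already established in Theorem~\ref{etale-exist}. The key point is that both sides of each displayed identity are operations $H^{n,i}(-)\to H^{*,*}(-)$ built out of the $P^a$ and $\beta$, so it suffices to check that they agree after applying the comparison map to \'etale cohomology, \emph{provided} that comparison map is injective in the relevant range. First I would note that, by Lemma~\ref{lem:mot-et} and Lemma~\ref{Bock-etale}, the motivic operations $P^a$ and $\beta$ are compatible with their \'etale counterparts under the natural map $H^{p,q}(X)\to H_\et^p(X,\muell{q})$ of \eqref{eq:compatible}; hence any word in these operators commutes with that map. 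Therefore the difference $\Delta$ of the two sides of a putative Adem relation is a motivic operation $H^{n,i}(-)\to H^{p,q}(-)$ whose composite with the \'etale comparison map vanishes, using Epstein's \'etale Adem relations \cite[9.7--8]{Epstein}.

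\textbf{Reduction to a universal example.} To conclude $\Delta=0$ I would pass to the universal case: motivic operations $H^{n,i}(-)\to H^{p,q}(-)$ are classified by classes in the motivic cohomology $H^{p,q}$ of the motivic Eilenberg--Mac\,Lane space $K(\F(i),n)$ (equivalently, by $\Hom$ in the derived category of motives out of the motivic EM-object), just as in the \'etale setting discussed before Theorem~\ref{thm:etaleops}. So the question becomes whether the comparison map $H^{p,q}(K)\to H_\et^p(K,\muell{q})$ is injective on the specific class represented by $\Delta(\iota_n)$. Here I would invoke the Norm Residue Theorem~\ref{NRT}: the map $\F(q)\to\tau^{\le q}R\alpha_*\muell{q}$ is a quasi-isomorphism, so $H^{p,q}(X)\to H_\et^p(X,\muell{q})$ is an isomorphism whenever $p\le q$, and more generally injective in a range controlled by the weight. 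The operations appearing in the Adem relations for $P^aP^b$ raise the weight from $i$ to $i\ell^2$ and the degree from $n$ to $n+2(a+b)(\ell-1)$; since the relations are homogeneous, I only need $\Delta$ to vanish in the appropriate bidegree, and I would arrange the argument so that the relevant degree $p$ never exceeds the weight $q$ on the nose — this works because the Adem relation is an identity of operations, so it is enough to verify it applied to the tautological class $\iota_n\in H^{n,i}$, and for $p\le q$ the comparison is an isomorphism. (When $p>q$, stability of the $P^a$ under simplicial suspension, Proposition~\ref{S-stable} together with its evident motivic analogue, lets me suspend into the iso range.)

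\textbf{Assembling the argument.} Concretely the steps are: (1) observe that $P^a$, $\beta$ are additive (Lemma~\ref{P^a_additive}) so both sides of the relation are well-defined operations; (2) invoke compatibility \eqref{eq:compatible} to push the relation into \'etale cohomology; (3) apply Epstein's \'etale Adem relations (Theorem~\ref{etale-exist}) to see the \'etale image of $\Delta(\iota_n)$ vanishes; (4) use the Norm Residue Theorem~\ref{NRT} to deduce $\Delta(\iota_n)=0$ motivically, suspending via Proposition~\ref{S-stable} if necessary to land in the range $p\le q$ where the comparison is injective; (5) conclude that $\Delta=0$ as an operation since it is determined by its value on the universal class. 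The second Adem relation, for $P^a\beta P^b$, is handled identically once one knows $\beta$ is compatible with the \'etale Bockstein, which is exactly Lemma~\ref{Bock-etale}.

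\textbf{Main obstacle.} The delicate point is step (4): making sure the vanishing in \'etale cohomology actually forces vanishing motivically. The comparison map $H^{p,q}\to H_\et^p(-,\muell{q})$ is only injective for $p\le q$, so one must genuinely exploit simplicial stability to reduce to that range, and check that the motivic $P^a$ are simplicially stable (the proof of Proposition~\ref{S-stable} should carry over verbatim using homotopy invariance of motivic cohomology). I expect this bookkeeping — tracking the bidegrees through the suspension isomorphisms and confirming one never needs information outside the Norm Residue range — to be the only real work; the algebra of signs and binomial coefficients is then inherited wholesale from \cite{Epstein} via Corollary~\ref{PQagree}.
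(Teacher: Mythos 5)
Your overall strategy (push the relation into \'etale cohomology via \eqref{eq:compatible}, quote Epstein's \'etale Adem relations, and then detect motivically on the universal class) is not the route the paper takes, and as written it has a genuine gap at exactly the step you flag as delicate, namely step (4). The Norm Residue Theorem \ref{NRT} gives an isomorphism $H^{p,q}(K)\to H_\et^p(K,\muell{q})$ only for $p\le q$; it does not give injectivity ``in a range controlled by the weight'' beyond that, and for the Adem difference $\Delta(\iota_n)$ the target bidegree $(p,q)=(n+2(a+b)(\ell-1),\,i\ell^2)$ will in general have $p>q$. Your proposed repair --- ``suspend into the iso range'' --- goes the wrong way: simplicial suspension is an isomorphism $H^{n,i}(X)\cong H^{n+1,i}(SX)$, so it raises the cohomological degree while keeping the weight fixed, moving you further above the line $p=q$, and you cannot desuspend because not every class is a suspension. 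The injectivity you actually need when $p>q$ is Proposition \ref{prop:detecting}, which is not a consequence of \ref{NRT} alone: it requires $n\ge 2i$ and rests on Voevodsky's theorem that $\Ftr K(\F(i),n)$ is then a split proper Tate motive, together with Lemma \ref{Bott-injects} and Levine's Theorem \ref{H[1/b]}. The correct use of stability is the opposite of yours: use Proposition \ref{S1-stable} (whose proof via the Cartan formula \ref{Cartan} is independent of Adem, so there is no circularity) to suspend \emph{upward} from $H^{n,i}$ to $H^{N,i}$ with $N\ge 2i$, verify the relation there on the universal class, and then descend along the suspension isomorphisms. With that fix your argument becomes essentially the paper's proof of Theorem \ref{Bock-motivic} ($Q^a=\beta P^a$), where this detection pattern is carried out carefully.

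The paper's own proof of Theorem \ref{Adem} is different and avoids detection altogether: it works at the chain level, checking that the hypotheses of Epstein's section~9 hold for the equivariant multiplication $A^{\oo\ell}\to A$ on $A=\oplus\tau^{\le i}R\alpha_*\muell{i}$ --- in particular that $P^a$ and $Q^a$ vanish for $a<0$ (via \cite[8.3.4, 11.1]{Epstein}) and that \cite[9.3--9.4]{Epstein} only need an equivariant product map --- so that the Steenrod--Epstein proof of the Adem relations on pp.~119--122 of \cite{SE} carries over verbatim to the motivic setting. That route yields the relations in all bidegrees at once, with the signs and binomial coefficients inherited from \cite{SE}, whereas your (repaired) detection argument buys a shorter proof at the cost of importing Voevodsky's computation of the motives of Eilenberg--Mac\,Lane spaces and the Bott-inversion theorem.
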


\begin{proof}[Proof (Epstein)]
We refer to section~9 of \cite{Epstein}, whose running assumption is
that the operations $P^a$ (and $Q^a$) are zero on $H^{*,*}$ for $a<0$.
This assumption holds by \cite[8.3.4]{Epstein}, using the adjunction
for sheaves in \cite[11.1]{Epstein}. In particular, 
$P^a$ and $Q^a$ vanish on $H_\pi^{*,*}$ by \cite[9.1]{Epstein}.
The proof of \cite[9.3]{Epstein} only requires an equivariant map 
$A^{\oo\ell}\to A$, so 9.3 and its Corollary 9.4 of {\em loc.\,cit.\,}\ 
remain valid in the motivic setting. Since 9.2, 9.5 and 9.6 of
\cite{Epstein} are formally true, we can now quote
the proof of \cite[9.8]{Epstein}: the proof of the Adem relations
on pp.\,119--122 of \cite{SE}, as amended by the Errata,  
carry over to this setting.
\end{proof}

\begin{subrem}
When $\ell=2$, Epstein points out in \cite[9.7]{Epstein} that, 
given the modifications in the proof of \ref{Adem} above, 
the usual Adem relations hold by the proof on p.\,119 of \cite{SE}: 
if $a<2b$ then 
\[
Sq^a Sq^b = \sum_{s+t=b} \choose{s-1}{a-2t}\ Sq^{a+s}Sq^t.
\]
\end{subrem}

\begin{subrem}
The usual Adem relations for $Q^aQ^b$ and $Q^aP^b$, stated in 
\cite[9.8]{Epstein}, are also valid in motivic cohomology.  These 
relations follow immediately from Theorem \ref{Adem},
given the formulas for $Q^a$ in Theorem \ref{Bock-motivic} below.
\end{subrem}

%

\begin{bistable}\label{P_V ops}
In \cite{RPO}, Voevodsky defines (bistable) cohomology operations $P_V^a$ on
$H^{n,i}(-,\F)$ of bidegree $(2a(\ell-1)$, $a(\ell-1))$. These satisfy:
$P_V^0x=x$ for all $x$; $P_V^ax=x^\ell$ for $x\in H^{2a,a}(X,\F)$;
$P_V^a=0$ on $H^{n,i}$ if $i\le a$ and $n<i+a$; the usual Adem relations
hold when $\ell>2$. The cohomological degrees of $P^a$ and $P_V^a$ are
the same, namely $2a(\ell-1)$, but the weights differ if $a\ne i$: 
if $a<i$ then $P_V^a$ has lower weight, but 
if $a>i$ then $P^a$ has lower weight.

When $\ell=2$, Voevodsky's operations $Sq_V^{2i}$ have bidegree$(2i,i)$
and $Sq_V^{2i+1}=\beta Sq^{2i}$.  They satisfy a modified Cartan formula
\cite[9.7]{RPO} which differs from our Cartan formula 
(Theorem \ref{Cartan}) by the presence of a factor of $[\zeta]$ in some terms.
\end{bistable}

\begin{subrem}\label{PV-Pet}
Brosnan and Joshua have observed in \cite[2.1]{BJ} 
and \cite[1.1(iii)]{BJ1} that the motivic-to-\'etale map 
sends $P_V^a$ to $P^a$ and $Sq_V^a$ to $Sq^a$.
The key is to observe that Voevodsky's total power operation 
\cite[5.3]{RPO} is compatible with Epstein's reduced power map 
(Definition \ref{def:powermap} above).
\end{subrem}

Cohomology operations on $H^{n,0}$ are easy to describe because
of the following characterization.

\begin{lem}\label{twist0}
Let $A$ be any abelian group.
If $X_\mathdot$ is a smooth simplicial scheme, the motivic cohomology ring
$H^{*,0}(X_\mathdot,A)$ is isomorphic to the topological cohomology
$H_\top^*(\pi_0X_\mathdot,A)$ of the simplicial set $\pi_0(X_\mathdot)$.
\end{lem}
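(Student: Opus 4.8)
The plan is to reduce the statement to a computation of motivic cohomology in weight $0$, which is well known to be ``combinatorial.'' First I would recall that the weight-zero motivic complex $\F(0)$ is quasi-isomorphic to the constant sheaf $A$ (here $\F$ is replaced by $A$, or one works integrally with $\Z(0)\simeq \Z$), so that $H^{n,0}(U,A) = H^n_{\nis}(U,A)$ for any smooth $U$. For a smooth scheme $U$ of finite type over $k$, the Nisnevich cohomology of the constant sheaf $A$ agrees with its Zariski cohomology, which in turn agrees with the singular cohomology $H^n_\top(U^{\top},A)$ where $U^{\top}$ is the underlying topological space of $U$ with the Zariski topology; but since $A$ is constant and $U$ has finitely many connected components, this is just $H^0(\pi_0 U, A)$ concentrated in degree $0$ — more precisely, $H^*_{\nis}(U,A) = \Hom(\pi_0 U, A)$ in degree $0$ and vanishes in positive degrees because a constant sheaf on an irreducible space is flasque.

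\textbf{Key steps.} The argument then proceeds degreewise on the simplicial scheme $X_\mathdot$. For each $p$, $X_p$ is a smooth scheme, so $H^{*,0}(X_p,A)$ is $A^{\pi_0 X_p}$ in degree $0$ and zero otherwise. Taking the hypercohomology spectral sequence (or the Bousfield–Kan / cosimplicial spectral sequence) for the simplicial scheme $X_\mathdot$ with coefficients in $\F(0)\simeq A$, the $E_1$-page has $E_1^{p,q} = H^{q,0}(X_p,A)$, which is concentrated on the line $q=0$ where it equals $A^{\pi_0 X_p}$. Hence the spectral sequence collapses and $H^{n,0}(X_\mathdot,A)$ is the $n$-th cohomology of the cochain complex $p \mapsto A^{\pi_0 X_p}$. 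But $p\mapsto \pi_0 X_p$ is precisely the simplicial set $\pi_0(X_\mathdot)$, so $p\mapsto A^{\pi_0 X_p} = \Hom_{\mathrm{Set}}(\pi_0 X_p, A)$ is the cosimplicial abelian group computing $H^*_\top(\pi_0 X_\mathdot, A)$. This identifies $H^{*,0}(X_\mathdot,A)$ with $H^*_\top(\pi_0 X_\mathdot,A)$; one then checks that the cup product on the motivic side, induced by the ring structure on $A$ (or by $\Z(0)\otimes\Z(0)\to\Z(0)$), corresponds under this identification to the usual cup product on the simplicial set, since both are induced by the diagonal and the multiplication of $A$, so the isomorphism is one of rings.

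\textbf{Main obstacle.} The only genuinely delicate point is the vanishing $H^{q,0}(U,A)=0$ for $q>0$ when $U$ is a smooth scheme, i.e.\ the claim that the constant sheaf $A$ has no higher Nisnevich (equivalently Zariski) cohomology on a smooth scheme. This is standard — it follows because $U$ is locally irreducible, a constant sheaf on an irreducible Noetherian space is flasque, and flasque sheaves are acyclic, combined with the finiteness of $\pi_0 U$ — but it is the step that makes the simplicial spectral sequence degenerate to a single row, which is what allows the reduction to the combinatorics of $\pi_0$. Everything else is bookkeeping: identifying $\F(0)$ (or $\Z(0)$) with the constant sheaf, running the descent spectral sequence for a simplicial scheme, and matching up cup products.
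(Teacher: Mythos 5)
Your proposal is correct and follows essentially the same route as the paper: identify $H^{*,0}(X_p,A)$ of each smooth term as $A^{\pi_0 X_p}$ concentrated in degree $0$ (the paper cites \cite[3.4]{MVW} for exactly this vanishing), let the simplicial/descent spectral sequence collapse to the cochain complex $\Hom(\pi_0X_\mathdot,A)$, and match the products via the construction of the motivic cup product. The only blemish is terminological — calling the Zariski sheaf cohomology of the constant sheaf ``singular cohomology'' of the Zariski space — but since you immediately justify the actual computation by flasqueness on irreducible components, this does not affect the argument.
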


\begin{proof}
For smooth connected $X$ we have $H^{n,0}(X,A)=H^n_\nis(X,A)$ for $n>0$
and $H^{0,0}(X,A)=A$, almost by definition; see \cite[3.4]{MVW}.
Hence the spectral sequence $E_1^{p,q}=H^q(X_p,A)\Rightarrow H^{p+q,0}(X)$
\edit{moved here 12/28}
degenerates to the cohomology of the chain complex 
$\Hom(\pi_0(X_\mathdot),A)$, which is $H_\top^*(\pi_0X_\mathdot,A)$.
For a simplicial set $K$ such as $\pi_0X_\mathdot$, 
the construction of the product in motivic cohomology \cite[3.11]{MVW}
shows that $H_\top^*(K)\cong H^{*,0}(K)$ is an isomorphism of rings.
\end{proof}

\begin{cor}
The ring of motivic cohomology operations on $H^{n,0}(-,\F)$ is 
isomorphic to $H^{*,*}(k,\F)\oo H_\top^*(K_n)$.
\end{cor}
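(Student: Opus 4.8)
The plan is to leverage Lemma \ref{twist0}, which reduces the computation of $H^{*,0}$ to topological cohomology, and then combine it with the Breen--Jardine classification of operations on $H_\et^n(-,\F)$ (Theorem \ref{BJ}) together with the observation that $H_\et^*(k,\F)$ coincides with $H^{*,0}(k,\F)$ in the relevant range. First I would set $K_n=K(\F,n)$, viewed as a constant simplicial sheaf, so that cohomology operations on $H^{n,0}(-,\F)$ are classified by $H^{*,0}(K_n,\F)$; since $K_n$ is (the constant sheaf on) a simplicial set, namely $\pi_0$ of itself, Lemma \ref{twist0} identifies $H^{*,0}(K_n,\F)$ with $H_\top^*(K_n,\F)$, the Cartan ring. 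But this only accounts for the operations landing in weight $0$; an arbitrary motivic operation $H^{n,0}\to H^{p,q}$ can have any target weight $q$, so the classifying object is really $H^{*,*}(K_n,\F)=\bigoplus_q H^{p,q}(K_n,\F)$, and I must compute this full bigraded ring.

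The key step is therefore to compute $H^{*,*}(K_n,\F)$ where $K_n$ is a constant simplicial sheaf. Here I would use the same spectral sequence as in the proof of Lemma \ref{twist0}, but now with $\F(q)$-coefficients: the descent/skeletal spectral sequence $E_1^{p,r}=H^{r,q}((K_n)_p,\F)\Rightarrow H^{p+r,q}(K_n,\F)$. Since each $(K_n)_p$ is a disjoint union of copies of $\Spec(k)$, we have $H^{r,q}((K_n)_p,\F)=\prod H^{r,q}(k,\F)$, and the $E_1$-page is $C^*(K_n;\F)\oo_\F H^{*,q}(k,\F)$ as a complex (the differential only involves the simplicial structure of $K_n$ since $H^{r,q}(k)$ is a constant coefficient system). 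The spectral sequence degenerates by a universal-coefficients / flatness argument over the field $\F$, giving $H^{*,*}(K_n,\F)\cong H^{*,*}(k,\F)\oo_\F H_\top^*(K_n;\F)$ as a bigraded $H^{*,*}(k,\F)$-module, where $H_\top^*(K_n)$ sits in weight-appropriate pieces via the identification $H^{p,q}(k)$-coefficients. I would then identify the ring structure as the twisted tensor product exactly as in Theorem \ref{BJ}: the product of $c\in H^{*,*}(k,\F)$ with an operation $P^I$ is governed by the Cartan relation $P^a\circ c=\sum P^i(c)P^j$ (Theorem \ref{Cartan}), and the classifying-space product computes this twisted multiplication.

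An alternative, cleaner route avoids spectral sequences: since $H^{*,0}(-,A)$ is just $H_\top^*(\pi_0(-),A)$ by Lemma \ref{twist0}, any operation $H^{n,0}\to H^{p,q}$ factors as follows. On a constant simplicial scheme $K_n$ the group $H^{n,0}(K_n)$ is represented, and a natural transformation to $H^{p,q}$ is an element of $H^{p,q}(K_n,\F)$; I would argue that $H^{p,q}(K_n,\F)$ only depends on $K_n$ through its topological realization, and decomposes as $\bigoplus_{a+b=p} H_\top^a(K_n,\F)\oo H^{b,q}(k,\F)$ because $K_n$ is built from $\Spec(k)$'s and motivic cohomology of a constant simplicial scheme satisfies a K\"unneth-type splitting over the ground field. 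This is essentially the statement that the operations on $H^{n,0}$ are exactly the $H^{*,*}(k,\F)$-linear combinations of the classical $P^I$, with twisted multiplication.

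The main obstacle will be the ring-structure identification rather than the additive computation: one must check that the product in $H^{*,*}(K_n,\F)$ really is the twisted tensor product, i.e. that the failure of $H^{*,*}(k,\F)$ and $H_\top^*(K_n)$ to commute is precisely captured by the motivic Cartan relations of Theorem \ref{Cartan} (and the Adem relations of Theorem \ref{Adem} for reducing monomials in the $P^a$ to the admissible $P^I$). Concretely, I expect the proof to reduce to invoking Theorem \ref{BJ} in a disguised form: because $H^{n,0}(X,\F)=H_\et^n(\pi_0 X,\F)$ and the motivic operations $P^a$ restrict to Epstein's \'etale $P^a$ (Lemma \ref{lem:mot-et}), the ring of operations on $H^{n,0}$ must agree with the \'etale answer $H_\et^*(k,\F)\oo H_\top^*(K_n)$, and then one replaces $H_\et^*(k,\F)$ by $H^{*,*}(k,\F)$, which surjects onto it and supplies the extra weights — checking that no new operations appear in higher weight (i.e. that $H^{p,q}(K_n)=0$ for the ``exotic'' bidegrees) is the one point requiring genuine verification.
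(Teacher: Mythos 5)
Your main line (paragraphs 1--2) is correct and is essentially the argument the paper intends: operations on $H^{n,0}$ are classified by $H^{*,*}(K_n,\F)$ for the constant simplicial sheaf $K_n=K(\F,n)$, and the weight-$q$ analogue of the spectral sequence used in Lemma \ref{twist0} (levelwise a disjoint union of copies of $\Spec k$, constant coefficients, splitting over the field $\F$) gives $H^{*,*}(K_n,\F)\cong H^{*,*}(k,\F)\oo H_\top^*(K_n)$, with the ring structure coming from Lemma \ref{twist0} (a ring isomorphism in weight $0$), the Cartan formula \ref{Cartan}, and the compatibility of the motivic $P^I$ with the topological ones via Lemma \ref{lem:mot-et} and Remark \ref{et-top}.

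However, the reduction you sketch in your closing paragraph rests on two false assertions and should be dropped (it is also unnecessary, since your spectral-sequence computation already yields the full bigraded answer). First, $H^{n,0}(X,\F)\cong H_\top^n(\pi_0X,\F)$, not $H_\et^n(\pi_0X,\F)$: the \'etale cohomology of a constant simplicial scheme over $k$ contains contributions from the Galois cohomology of $k$, so these two groups differ for general $k$. Second, $H^{*,*}(k,\F)$ does not surject onto $H_\et^*(k,\muell{*})$; by the Norm Residue Theorem \ref{NRT} it is the subring with degree at most weight ($H^{p,q}(k)=0$ for $p>q$), so classes such as Brauer elements in $H_\et^2(k,\mu_\ell)$ are not in its image. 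Consequently the ring of motivic operations on $H^{n,0}$ does \emph{not} ``agree with the \'etale answer'' $H_\et^*(k,\F)\oo H_\top^*(K_n)$ of Theorem \ref{BJ}; rather it maps injectively to it (compare Proposition \ref{prop:detecting} with $i=0$), with the smaller coefficient ring $H^{*,*}(k,\F)$. The correct way to pin down the multiplicative structure is the one already implicit in your main computation: the splitting is induced by cup product from $H^{*,*}(k)\oo H^{*,0}(K_n)$, the weight-zero part is identified with Cartan's ring by Lemma \ref{twist0}, and the twisting of coefficients past the $P^a$ is governed by the Cartan formula \ref{Cartan}.
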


If $K$ is a simplicial set, the isomorphism
$H^{*,0}(K,\F)\cong H_\top^*(K,\F)$ is compatible with the action
of the $P^I$. This is clear from Lemma \ref{lem:mot-et} 
and Remark \ref{et-top}.

\begin{subex}\label{suspension elt}
Let $\Delta^1$ denote the simplicial 1-simplex and 
$s\in H^{1,0}(\Delta^1,\partial\Delta^1)$ the generator.
By the above comparison with topology, $P^0(s)=s$.
By definition, $P^a(s)=0$ for $a>0$.
\end{subex}

Recall that the simplicial suspension $SX$ of a pointed simplicial 
scheme $X$ is again a simplicial scheme. Multiplication by the element $s$
of Example \ref{suspension elt} induces a canonical isomorphism
$H_\et^n(X,\muell{i})\map{\cong}H_\et^{n+1}(SX,\muell{i})$.
(Compare to Lemmas 1.2 and 2.1 of \cite{SE}.)

\begin{prop}\label{S1-stable}
The motivic operations $P^a$ are simplicially stable in the sense that they
\index{added 12/28}
commute with simplicial suspension: there are commutative diagrams
for all $X$, $n$ and $i$, with $N=n+2a(\ell-1)$: 
\[\begin{CD}
H^{n,i}(X) @>{P^a}>> H_\et^{N,i\ell}(X) \\
@V{\cong}VV   @V{\cong}VV \\
H^{n+1,i}(SX) @>{P^a}>> H^{N+1,i\ell}(SX).
\end{CD}\]
\end{prop}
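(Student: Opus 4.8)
The plan is to identify the suspension isomorphism with multiplication by the class $s$ of Example \ref{suspension elt} and then to collapse the resulting Cartan sum. As in the \'etale case discussed just above, the vertical isomorphism $\sigma\colon H^{n,i}(X)\map{\cong}H^{n+1,i}(SX)$ is given by $\sigma(x)=s\cdot x$, where $s\in H^{1,0}(\Delta^1,\partial\Delta^1)$ is the generator; since $s$ has weight $0$, this preserves weights, consistent with the bidegrees in the square. Granting a Cartan formula for this product, the key computation is, for $x\in H^{n,i}(X)$,
\[
P^a(s\cdot x)=\sum_{u+v=a}P^u(s)\cdot P^v(x)=P^0(s)\cdot P^a(x)=s\cdot P^a(x),
\]
the second equality by Example \ref{suspension elt}, which gives $P^0(s)=s$ and $P^u(s)=0$ for $u>0$ (the latter also because $P^u$ vanishes in cohomological degree $1$ once $u\ge1$). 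This is exactly the asserted identity $P^a\circ\sigma=\sigma\circ P^a$; the case $\ell=2$ runs identically with $Sq^a$ in place of $P^a$.

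The step that requires the most care --- and the main obstacle --- is supplying the Cartan formula for the product $s\cdot x$, since Theorem \ref{Cartan} is stated for the honest external product $H^{n,i}(X)\otimes H^{m,j}(Y)\to H^{*,(i+j)\ell}(X\times Y)$, while $s\cdot x$ lives in a relative (reduced) group. I would resolve this in one of two ways. First, one checks, exactly as in Remark \ref{natural-etale}, that the reduced power map of Definition \ref{motivic-exist} and the proof of the Cartan formula use only an equivariant multiplication $A^{\otimes\ell}\to A$, the chain-level Lemma \ref{E4.4.3}, and \cite[6.4]{Epstein}, and so pass verbatim to the relative groups underlying $SX$; this produces the relative Cartan formula directly. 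Alternatively, one pulls back along the quotient map realizing $SX$ as a smash product: its effect on cohomology is a split monomorphism carrying $s\cdot x$ to an ordinary external product and commuting with every $P^a$ by naturality, so the plain Theorem \ref{Cartan} together with injectivity of that map yields the identity. Either route reduces the proposition to the collapse displayed above.

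Should phrasing the relative Cartan formula prove awkward, a fallback --- parallel to the proof of Proposition \ref{S-stable} for the \'etale operations --- is to run the chain-level arguments of Lemmas 1.2 and 2.1 of \cite{SE} directly on the cochain model $I_\nis^*$ furnished by the Norm Residue Theorem, using homotopy invariance of motivic cohomology; but the Cartan-formula route above remains within results already established here and is cleaner.
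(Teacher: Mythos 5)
Your proof is correct and is essentially the paper's own argument: the paper's proof is exactly the one-line Cartan-formula collapse $P^a(sx)=P^0(s)P^a(x)=s\cdot P^a(x)$, using Example \ref{suspension elt}. Your additional discussion of how to justify the Cartan formula for the relative product $s\cdot x$ simply makes explicit a point the paper leaves implicit, and either of your two routes is a reasonable way to do so.
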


\begin{proof}
By the Cartan formula \ref{Cartan}, $P^a(sx)=P^0(s)P^a(x)=s\cdot P^a(x)$.
\end{proof}

\begin{ex}
Consider the classifying space $K_{\F}=K(\F(i),n)$ for $H^{n,i}(-,\F)$.
If $n\ge i$, we see from \cite[3.27]{V11} that the summands of smallest 
weight or degree in $\Ftr(K_{\F})$ are $\F(i)[n]$ and $\F(i)[n+1]$.
It follows that: $H^{a,*}(K_{\F})=H^{a,*}$ for $a<n$;
$H^{n,*}(K_{\F})\cong H^{0,*}\oplus H^{n,*}$
on the tautological class $\iota$ in $H^{n,i}(K_{\F})$ and
constant maps to elements of $H^{n,*}(k,\F)$; and
$H^{0,0}\oplus H^{1,*}\oplus H^{n+1,*}\map{\cong}H^{n+1,*}(K_{\F})$ 
by the map $(c,c',c'')\mapsto c\beta(\iota)+c'\iota+c''$. That is, 
there are no cohomology operations $H^{n,i}\to H^{p,q}$ with $p\le n$ 
except for constant operations and $\F$-linear terms when $p=n$
(multiples of the identity plus a constant),
and no operations $H^{n,i}\to H^{n+1,q}$ other than the Bockstein,
multiplication by elements of $H^{1,*}(k)$, and constants.

If $n<i$, this is no longer the case. 
In Example \ref{ex:b=1} below,
we show that there is a weight-reducing operation 
$H^{1,2}\to H^{2,1}$ for all $k$, and a weight-preserving operation
$H^{1,2}\to H^{3,2}$ for most $k$.  For another example, suppose that
$\zeta\in k$ and $n\le i$. Then cupping with $[\zeta]\in H^{0,1}(k)$ is an 
isomorphism by Theorem \ref{NRT}; its inverse (defined when $n<i$) is 
an operation $H^{n,i}\to H^{n,i-1}$.
\end{ex}

\newpage 
\section{The operations $P^0$ and $Q^0$} 

Sometimes we can deduce motivic operations from \'etale operations.
For example, if $n\le i$ (and hence $n\le i\ell$) then the diagram
\eqref{eq:compatible} allows us to identify the motivic operation 
$P^0:H^{n,i}(X)\to H^{n,i\ell}(X)$ with the \'etale operation 
$P^0:H_\et^n(X,\muell{i})\cong H_\et^n(X,\muell{i\ell})$, and thus 
conclude that $P^0$ is an isomorphism in this range. 
The same reasoning, using the Norm Residue Theorem \ref{NRT}, 
shows that if $n\le i$ and $n+2a(\ell-1)\le i\ell$, 
the motivic and \'etale operations $P^a$ agree on 
$H^{n,i}(X)\cong H_\et^n(X,\muell{i})$, and also agree with 
$b^{(i-a)(\ell-1)/d}P_V^a$, where $b\in H^{0,d}(k)$ is defined as follows.

Fix a primitive $\ell^{th}$ root of unity, $\zeta$, in an extension
field of $k$; this choice determines a canonical generator $[\zeta]$
of $H^0(k(\zeta),\mu_\ell)$. If $[k(\zeta):k]=d$ then 
$H^{0,d}(k)\cong H_\et^0(k,\muell{d})$, and the element 
$[\zeta]^d=[\zeta^{\oo d}]$  descends to a canonical ``periodicity''
element $b$ in $H^{0,d}(k)$. (If $d=1$ then $b=[\zeta]$.)
Note that multiplication by $b$ is a map from $H^{n,i}(X)$ to 
$H^{n,i+d}(X)$; by Theorem \ref{NRT}, it is an isomorphism when $i\ge n$.  
By construction, this is the map in cohomology induced by
the change-of-truncation map
\begin{equation}\label{eq:c.o.truncation}
\F(i)\cong \tau^{\le i}R\alpha_*\muell{i}\to
           \tau^{\le i+d}R\alpha_*\muell{i} \cong \F(i+d).
\end{equation}
associated to the isomorphism of \'etale sheaves $\muell{i}\to\muell{i+d}$
sending the generator $\zeta^{\oo i}$ to the generator $\zeta^{\oo i+d}$.

Write $H^{n,i}(X)[1/b]$ for the colimit of
\[
H^{n,i}(X)\map{b}H^{n,i+d}(X)\map{b}\cdots\map{b} H^{n,i+jd}(X)\map{b}\cdots.
\]
From the diagram
\begin{equation*}
\begin{CD}
H^{n,i}(X) @>b>> H^{n,i+d}(X) @>b>> \cdots H^{n,i+jd}(X) @>b>> \cdots\\
@VVV  @VVV  @VVV \\  
H_\et^n(X,\muell{i}) @>{\cong}>> H_\et^n(X,\muell{i+d}) @>{\cong}>> \cdots 
H_\et^n(X,\muell{i+jd}) @>{\cong}>> \cdots
\end{CD}
\end{equation*}
we obtain a natural transformation from $H^{n,i}(X)[1/b]$ 
to $H_\et^n(X,\muell{i})$.

We can formulate this in the motivic derived category $DM$,
using the \'etale-to-Nisnevich change of topology map $\alpha$.
Recall from \cite[10.2]{MVW} that $H_\et^n(X,\muell{i})$ is isomorphic to
\[
\Hom_{DM_\et}(\alpha^*\Ftr X,\F(i)[n])\cong 
\Hom_{DM}(\Ftr X,R\alpha_*\muell{i}).
\]
The map $\F(i)=\tau^{\le i}R\alpha_*\muell{i}\to R\alpha_*\muell{i}$
is compatible with the map $\F(i)\to\F(i+d)\to R\alpha_*\muell{i+d}$,
so it factors through a map $\F(i)[1/b]\to R\alpha_*\muell{i}$,
where $\F(i)[1/b]$ denotes the (homotopy) colimit in $DM$ of 
\[
\F(i) \map{b} \F(i+d) \map{b} \F(i+2d) \map{b}\cdots\map{b} 
\F(i+jd) \map{b}\cdots.
\]

The following calculation is originally due to Levine \cite{Levine}.

\begin{thm}\label{H[1/b]} 
For each $i$, $\F(i)[1/b]\to R\alpha_*\muell{i}$
is an isomorphism in $DM$.

For $X$ smooth and all $n$, $H^{n,i}(X)[1/b]\map{} H_\et^n(X,\muell{i})$
is an isomorphism.
\end{thm}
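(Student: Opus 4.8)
The plan is to prove the derived-category statement $\F(i)[1/b]\map{\sim}R\alpha_*\muell{i}$ first, and then deduce the statement about $H^{n,i}(X)[1/b]$ by applying $\Hom_{DM}(\Ftr X,-[n])$ and commuting it past the homotopy colimit. The second step is essentially formal: $\Ftr X$ is a compact object of $DM$ (for $X$ smooth), so $\Hom_{DM}(\Ftr X,-)$ commutes with filtered homotopy colimits, hence $\operatorname{colim}_j \Hom_{DM}(\Ftr X,\F(i+jd)[n]) \cong \Hom_{DM}(\Ftr X,(\operatorname{hocolim}_j\F(i+jd))[n])$; the left side is $H^{n,i}(X)[1/b]$ by the discussion preceding the theorem, and once the first statement is known the right side is $\Hom_{DM}(\Ftr X,R\alpha_*\muell{i}[n])\cong H^n_\et(X,\muell{i})$ by \cite[10.2]{MVW}. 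One should check that under these identifications the comparison map is the one built in the excerpt, but this is just chasing the construction of $\F(i)[1/b]\to R\alpha_*\muell{i}$ through the compactness isomorphism.

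So the real content is the first statement. The map $\F(i)=\tau^{\le i}R\alpha_*\muell{i}\to R\alpha_*\muell{i}$ is, by the Norm Residue Theorem \ref{NRT}, an isomorphism on cohomology sheaves in degrees $\le i$, and the cofiber $C$ has cohomology sheaves concentrated in degrees $\ge i+1$ — indeed $\underline{H}^n(C)\cong R^n\alpha_*\muell{i}$ for $n>i$ and $0$ for $n\le i$. The point of inverting $b$ is exactly to kill this cofiber: multiplication by $b$ shifts the truncation level up by $d$, so on cohomology sheaves it is an isomorphism in a range that grows without bound. Concretely, I would argue that $\operatorname{hocolim}_j\F(i+jd)$ has cohomology sheaf $\operatorname{colim}_j\underline{H}^n(\F(i+jd))$ in each degree $n$ (homotopy colimits over $\Z_{\ge0}$ commute with taking cohomology sheaves, since sheafification and filtered colimits are exact), and the transition map $\underline{H}^n(\F(i+jd))\to\underline{H}^n(\F(i+(j+1)d))$ is an isomorphism as soon as $i+jd\ge n$, because both sides are then $R^n\alpha_*\muell{i}$ and the map \eqref{eq:c.o.truncation} is the identity on cohomology sheaves in that range (it is induced by the isomorphism of \'etale sheaves $\muell{i}\cong\muell{i+d}$). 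Hence the colimit is $R^n\alpha_*\muell{i}$ for every $n$, which is precisely $\underline{H}^n(R\alpha_*\muell{i})$, and one checks the comparison map realizes this identification. Since a map in $DM$ (over a perfect field, with $DM$ the derived category of Nisnevich sheaves with transfers) that is an isomorphism on all cohomology sheaves is an isomorphism, we are done.

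The main obstacle I expect is the bookkeeping around homotopy colimits in $DM$: one must be careful that $\operatorname{hocolim}$ over $\Z_{\ge0}$ genuinely commutes with the cohomology-sheaf functor (this is where one uses that a filtered colimit of a directed system of complexes computes the homotopy colimit, together with exactness of sheafification), and that the structure maps $\F(i+jd)\map{b}\F(i+(j+1)d)$ in $DM$ really do induce, on $\underline{H}^n$ in the stable range, the identity map of $R^n\alpha_*\muell{i}$ under the canonical identifications — this rests on the explicit description of $b$ via $[\zeta]^d$ and the change-of-truncation map \eqref{eq:c.o.truncation}. A secondary point worth a sentence is compactness of $\Ftr X$ for $X$ smooth, which is standard (\cite{MVW}) but should be cited. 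Everything else — the identification $H^{n,i}(X)[1/b]=\operatorname{colim}_j H^{n,i+jd}(X)$, the cohomology-sheaf computation of $R\alpha_*\muell{i}$ — is either definitional or an immediate consequence of Theorem \ref{NRT}.
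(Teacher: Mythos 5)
Your proposal is correct and follows essentially the same route as the paper: identify the $b$-tower $\F(i)\to\F(i+d)\to\cdots$ with the truncation tower $\tau^{\le i+jd}R\alpha_*\muell{i}$ via the Norm Residue Theorem and \eqref{eq:c.o.truncation}, and deduce the statement for $H^{n,i}(X)[1/b]$ from compactness of $\Ftr X$. The only difference is that the paper simply quotes the fact that any complex is the homotopy colimit of its truncations, whereas you verify it by hand on cohomology sheaves; that verification is sound.
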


\begin{proof}
Any complex $C$ is the homotopy colimit of the change-of-truncation maps
$\tau^{\le m}C\to\tau^{\le m+1}C$. For $C=R\alpha_*\muell{i}$, this
yields the first assertion.
The second assertion is an immediate consequence of this and the fact
that $\Ftr X$ is a compact object in $DM$, 
so $\Hom_{DM}(\Ftr X,-)$ commutes with homotopy colimits.
\end{proof}

Our next goal is to compare $P^0$ to the cohomology of the
change-of-truncation map 
$\tau^{\le i}R\alpha_*\muell{i}\to\tau^{\le i\ell}R\alpha_*\muell{i}$
of \eqref{eq:c.o.truncation}.

\begin{lem}\label{changetruncation}
The Frobenius map $\F(i)\map{\Phi}\F(i\ell)$ in motivic cohomology
is chain homotopic to the change-of-truncation map
\[ 
\F(i)\cong \tau^{\le i}R\alpha_*\muell{i}\to
           \tau^{\le i\ell}R\alpha_*\muell{i}\cong\F(i\ell).
\]
The Frobenius $H^{n,i}(X)\map{\Phi} H^{n,i\ell}(X)$ is multiplication by 
$b^{i(\ell-1)/d}=[\zeta^{\oo i(\ell-1)}]$.
\end{lem}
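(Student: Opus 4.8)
The plan is to prove both statements of Lemma~\ref{changetruncation} by reducing to the definition of the Frobenius map and the $\ell$-fold product structure used to build the $P^a$. First I would recall that $\Phi:\F(i)\to\F(i\ell)$ is, by definition (cf.\ Proposition~\ref{P0=Frob}), induced by the $\ell^{th}$ power map $\muell{i}\to\muell{i\ell}$, $\zeta^{\oo i}\mapsto\zeta^{\oo i\ell}$, on \'etale sheaves. The point is that this $\ell^{th}$ power map on $\muell{i}$ is the composite of the multiplication map $m:(\muell{i})^{\oo\ell}\to\muell{i\ell}$ with the diagonal, and after applying $R\alpha_*$ and passing to good truncations this is exactly the map whose induced map on cohomology computes $P^0$ via Definition~\ref{motivic-exist}. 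Concretely, for $u\in H^{n,i}(X)$ the element $m_*P(u)$ expands as $\sum D_k(u)\oo w_k$, and since $\pi$ acts trivially after landing in $A$ the degree-zero term $D_0(u)\oo w_0 = D_0(u)$ is precisely the image of $u$ under the $\ell$-fold cup power composed with $m_*$, i.e.\ under the map induced by $\muell{i}\to\muell{i\ell}$ on truncated complexes. Since $P^0 u = (-1)^0\nu_n D_{(n-0)(\ell-1)}(u)$ and for the chain-level multiplication the relevant term is the one of top $D$-index\,---\,and the Frobenius picks out the $w_0$ term\,---\,I would identify $\Phi$ at the chain level with the change-of-truncation map $\tau^{\le i}R\alpha_*\muell{i}\to\tau^{\le i\ell}R\alpha_*\muell{i}$ of \eqref{eq:c.o.truncation}, which is canonically $\F(i)\to\F(i\ell)$. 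The uniqueness-up-to-chain-homotopy of the comparison map $(R\alpha_*\muell{i})^{\oo\ell}\to R\alpha_*\muell{i\ell}$ then gives that the two maps $\F(i)\to\F(i\ell)$ agree up to chain homotopy.

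For the second assertion, I would factor the change-of-truncation map $\tau^{\le i}R\alpha_*\muell{i}\to\tau^{\le i\ell}R\alpha_*\muell{i}$ through the intermediate truncations $\tau^{\le i+jd}R\alpha_*\muell{i}$ for $j=0,1,\dots,(i(\ell-1))/d$, noting that $i\ell = i + i(\ell-1)$ and $d\mid i(\ell-1)$ because $d\mid\ell-1$. Each step $\tau^{\le i+jd}R\alpha_*\muell{i}\to\tau^{\le i+(j+1)d}R\alpha_*\muell{i}$ is, by the definition of $b$ right after \eqref{eq:c.o.truncation}, the map inducing multiplication by $b\in H^{0,d}(k)$; composing $i(\ell-1)/d$ of them gives multiplication by $b^{i(\ell-1)/d}$. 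Under the identification $H^{0,d}(k)\cong H_\et^0(k,\muell{d})$ we have $b = [\zeta^{\oo d}]$, so $b^{i(\ell-1)/d} = [\zeta^{\oo i(\ell-1)}]$, giving the stated formula. One subtlety to check is that the change-of-truncation map $\F(i)\to\F(i\ell)$ is genuinely the composite of these $d$-step maps rather than something larger\,---\,but this is immediate since good truncation is functorial and $\tau^{\le i\ell}(\tau^{\le i\ell}C) = \tau^{\le i\ell}C$, so the composite of all intermediate truncation maps is the single map $\tau^{\le i}C\to\tau^{\le i\ell}C$.

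The main obstacle I anticipate is the first step: pinning down that the chain-level Frobenius\,---\,defined via the sheaf map $\muell{i}\to\muell{i\ell}$, or equivalently via $m$ and the diagonal\,---\,really does coincide (up to the scalar and sign in \eqref{eq:P=D}, which for $P^0$ are both trivial) with the $w_0$-component of $m_*P$. This requires unwinding Definition~\ref{def:powermap} and Definition~\ref{motivic-exist} carefully: the power map $P$ sends a cocycle $u$ to the image of $u^{\oo\ell}$ in the $\pi$-equivariant injective resolution $J$, and $D_0(u)$ is read off from the $w_0 = 1$ coefficient of $m_*P(u)$ expanded via Theorem~\ref{Kunneth}. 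Because $\pi$ acts trivially on $A = \oplus\tau^{\le i}R\alpha_*\muell{i\ell}$ and $w_0$ corresponds to the bottom cell $e_0$ of the standard resolution $C_*$, evaluating on $e_0$ just gives the underlying non-equivariant $\ell$-fold product $u\mapsto m_*(u^{\oo\ell})$, which is the cup-$\ell$-power; and the cup-$\ell$-power composed with nothing else is by definition what the Frobenius sheaf map $\muell{i}\to\muell{i\ell}$ induces. Once this identification is in place, everything else is bookkeeping with truncations and the scalar $\nu_n$ (which equals $1$ when $a=0$ since $\nu_n$ depends only on $n$ through $((\ell-1)/2)!^{-n}$, but note $P^0$ uses $\nu_n$ as well\,---\,however the compatibility with \'etale $P^0$ and Proposition~\ref{P0=Frob}, already granted, forces the normalization to come out as the honest Frobenius).
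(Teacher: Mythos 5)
Your second paragraph is fine: factoring $\tau^{\le i}R\alpha_*\muell{i}\to\tau^{\le i\ell}R\alpha_*\muell{i}$ into $i(\ell-1)/d$ steps of the form \eqref{eq:c.o.truncation}, each inducing multiplication by $b$, is exactly what the paper means by ``the final assertion follows from \eqref{eq:c.o.truncation}.'' The gap is in the first assertion, which you never actually establish. Your route through $m_*P$ and the expansion $\sum w_k\oo D_k(u)$ conflates three different maps: the $\ell$-fold cup power $D_0(u)=u^\ell$, which lands in $H^{n\ell,i\ell}(X)$; the operation $P^0=\nu_n D_{n(\ell-1)}$, which is degree-preserving; and the Frobenius $\Phi$, induced by the sheaf map $\muell{i}\to\muell{i\ell}$, which is also degree-preserving. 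Your ``i.e.''\ identifying the cup power $m_*(u^{\oo\ell})$ with ``the map induced by $\muell{i}\to\muell{i\ell}$ on truncated complexes'' is a bidegree mismatch ($n\ell$ versus $n$), and you acknowledge but never resolve the resulting tension between the $w_0$-term and the top $D$-index term. Worse, resolving it is precisely the content of Proposition \ref{P0=Frob} (that $P^0$, defined via $D_{n(\ell-1)}$, is represented by the Frobenius), which the paper proves \emph{after} this lemma, by a separate Godement-resolution argument in the style of Epstein's 8.3.4, and whose ``multiplication by $b^{i(\ell-1)/d}$'' conclusion uses this lemma as input. So your appeal to \ref{P0=Frob} ``already granted'' is circular, and even granting it would not produce the chain-level statement about the change-of-truncation map. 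Likewise, the uniqueness-up-to-homotopy you invoke is for the equivariant product comparison $(R\alpha_*\muell{i})^{\oo\ell}\to R\alpha_*\muell{i\ell}$, which is not the map at issue.

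The lemma needs none of the reduced-power machinery. The point is that the $\ell$-th power map on the sheaf of rings $\cO=\oplus\muell{i}$ sends $\zeta^{\oo i}\mapsto\zeta^{\oo i\ell}$, i.e.\ it \emph{is} the canonical isomorphism $\muell{i}\cong\muell{i\ell}$ (the composite of the isomorphisms underlying \eqref{eq:c.o.truncation}); under the $\Z/d$-graded identification it is the identity of $\cO$. Choosing an injective resolution $\muell{i}\to I$, this single sheaf map lifts (comparison theorem) to $f_i:I\to I$ chain homotopic to the identity, and $\Phi$ is represented by $\tau^{\le i\ell}$ of the composite $\tau^{\le i}\alpha_*I\subset\alpha_*I\map{f_i}\alpha_*I$. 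Since good truncation preserves chain homotopies, this is chain homotopic to the inclusion $\tau^{\le i}\alpha_*I\subset\tau^{\le i\ell}\alpha_*I$, which is the change-of-truncation map; your second paragraph then finishes the cohomological statement. As written, your argument does not get to this identification, so the first assertion is unproved.
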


\begin{proof}
The Frobenius endomorphism is the identity on the \'etale sheaf of rings 
$\cO=\oplus_{i=1}^d\muell{i}$, so if we fix $i$ and an injective replacement
$\muell{i}\to I$, the Frobenius on $\muell{i}$ lifts to a map 
$f_i:I\to I$ which is chain homotopic to the identity. 
Since the product in motivic cohomology is induced from the product on 
$R\alpha_*\muell{i}=\alpha_*I$, the Frobenius in motivic cohomology is 
represented by the good truncation in degrees at most $i\ell$ of the composite
$\tau^{\le i}\alpha_*I\subset\alpha_*I \map{f_i} \alpha_*I$.
Since good truncation preserves chain homotopy, it is chain homotopic
to the canonical map $\tau^{\le i}\alpha_*I\subset\tau^{\le i\ell}\alpha_*I$.

The final assertion follows from \eqref{eq:c.o.truncation}.
\end{proof}

\begin{prop}\label{P0=Frob}
The map $P^0\!: H^{n,i}(X)\to H^{n,i\ell}(X)$ is 
multiplication by $b^{i(\ell-1)/d}\!.$ 
Equivalently, $P^0$ is the cohomology of the change-of-truncation map
\[\tau^{\le i}R\alpha_*\muell{i}\to\tau^{\le i\ell}R\alpha_*\muell{i}.\]
\end{prop}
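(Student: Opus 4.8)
The plan is to reduce the claim to Lemma \ref{changetruncation}, which already identifies the change-of-truncation map $\tau^{\le i}R\alpha_*\muell{i}\to\tau^{\le i\ell}R\alpha_*\muell{i}$ with the Frobenius $\Phi$ and computes it as multiplication by $b^{i(\ell-1)/d}$. So the real content is the first assertion, $P^0=\Phi$, and for this I would argue at the chain level using Epstein's definition \eqref{eq:P=D}. By definition, for $u\in H^{n,i}(X)$ we have $P^0u=\nu_n D_{n(\ell-1)}(u)$, where $D_{n(\ell-1)}$ is the top-degree term in the expansion $m_*P(u)=\sum D_k(u)\oo w_k$; since $w_0=1$, this $D_{n(\ell-1)}(u)$ is precisely the coefficient of $w_0$, i.e.\ the image of $m_*P(u)$ under the projection $H_\pi^{n\ell}(X,A)\to H^{n\ell}(X,A)\oo H^0(\pi)$. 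By \cite[5.2.1]{Epstein} (invoked already in the proof of Proposition \ref{l-power}), the map $j_*\colon H_\pi^*(X,A)\to H^*(X,A)$ killing the higher $w_k$ sends $Pu$ to the external $\ell$-fold power $u\times\cdots\times u$, so $m_*Pu$ has image $m_*(u\times\cdots\times u)=u^\ell$ in $H^{n\ell}(X,A)$. Hence $D_{n(\ell-1)}(u)=u^\ell$ up to the normalization constant.

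Next I would pin down the constant: one must check that $\nu_n$ as given by the formula in \eqref{eq:P=D} equals $1$ here, so that $P^0u=u^\ell$ exactly. Since $\nu_n=(-1)^r\bigl(\tfrac{\ell-1}2\bigr)!^{-n}$ with $r=(\ell-1)(n^2+n)/4$, and since the same constant appears (with $a=0$) in the classical Steenrod setting where $P^0$ is known to be the $\ell$th power (Frobenius on $H^0$), the value of $\nu_n$ is forced to be the one making $P^0u=u^\ell$; alternatively one simply notes that in the $\F$-algebra $H^{*,*}(X,A)$ the relevant coefficient $(-1)^0\nu_n$ times the sign from the Künneth identification is $1$ by the same bookkeeping as in \cite[7.3]{Epstein}. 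Thus $P^0\colon H^{n,i}(X)\to H^{n,i\ell}(X)$ is $u\mapsto u^\ell$, where the product is taken using the graded multiplication $m\colon A^{\oo\ell}\to A$ coming from $\oplus_i\tau^{\le i}R\alpha_*\muell{i}$.

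It then remains to identify $u\mapsto u^\ell$, as a natural transformation $H^{n,i}\to H^{n,i\ell}$, with the Frobenius $\Phi$ of Lemma \ref{changetruncation}. This is because $\Phi$ is by definition the $\ell$th-power map on the graded $\F$-algebra $\oplus H^{*,*}$: the map $\F(i)\to\F(i\ell)$ inducing $\Phi$ (see the Introduction, just after \ref{P0=Frob} is stated) is the $\ell$th power map on $\cO=\oplus_{i}\muell{i}$, which at the level of $R\alpha_*$ and its truncations is exactly the multiplication-by-itself map realizing $u\mapsto u^\ell$ — this is the same compatibility already used in the proof of Lemma \ref{changetruncation}, namely that the motivic product is induced from the product on $R\alpha_*\muell{i}$. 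Combining with the computation in Lemma \ref{changetruncation} that $\Phi$ is multiplication by $b^{i(\ell-1)/d}$ gives both assertions.

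The main obstacle I anticipate is the sign/normalization bookkeeping in the second paragraph: verifying that the constant $\nu_n$ and the Künneth sign conspire to give $P^0u=u^\ell$ on the nose rather than $\pm u^\ell$. This is routine in the topological case and Epstein treats it in \cite[6.3, 7.3]{Epstein}, so I would simply cite that the argument there applies verbatim in the present sheaf-theoretic setting, exactly as was done in the proof of Proposition \ref{l-power}; no genuinely new input is needed beyond \cite[5.2.1]{Epstein} and Lemma \ref{changetruncation}.
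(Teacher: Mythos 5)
Your final reduction to Lemma \ref{changetruncation} is exactly how the paper concludes, but the core of your argument --- the identification $P^0u=u^\ell$ via $j_*$ --- contains a genuine error. In the expansion $m_*P(u)=\sum w_k\oo D_k(u)$ the class $D_k(u)$ is the coefficient of $w_k$, so $D_{n(\ell-1)}(u)$ is the coefficient of $w_{n(\ell-1)}$ and lies in $H^{n\ell-n(\ell-1)}=H^n(X,B)$; it is \emph{not} the coefficient of $w_0$. The coefficient of $w_0$ is $D_0(u)$, which is what $j_*$ and \cite[5.2.1]{Epstein} detect: that argument computes $D_0(u)=u^{\times\ell}$ and hence proves Proposition \ref{l-power} (that $P^a u=u^\ell$ when $n=2a$, since then $(n-2a)(\ell-1)=0$), but it gives no information whatsoever about $D_{n(\ell-1)}(u)$. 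Indeed your conclusion cannot be right as stated: for $n>0$ the $\ell$-th power of the cohomology class $u$ lives in $H^{n\ell,i\ell}$, whereas $P^0u$ lives in $H^{n,i\ell}$, so $P^0$ is not ``$u\mapsto u^\ell$'' on cohomology classes, and no normalization of $\nu_n$ can fix this degree mismatch.

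What is actually needed, and what the paper supplies, is a \emph{cochain-level} multiplicative structure so that $P^0$ can be identified with the $\ell$-th power map on cocycles (i.e.\ with the Frobenius endomorphism of the coefficient algebra, which then induces the change-of-truncation map by Lemma \ref{changetruncation}). The paper does this by taking the Godement resolution $S_i^\mathdot$ of $\tau^{\le i}R\alpha_*\muell{i}$, noting that the product on $\oplus_i\tau^{\le i}R\alpha_*\muell{i}$ gives pairings $S_i^n\oo S_j^n\to S_{i+j}^n$, checking via \cite[11.1]{Epstein} that the hypotheses of Epstein's section~8 hold and that the equivariant map $(R\alpha_*\muell{i})^{\oo\ell}\to R\alpha_*\muell{i\ell}$ lifts to $(S_i^\mathdot)^{\oo\ell}\to S_{i\ell}^\mathdot$ (the analogue of \cite[8.3.2]{Epstein}), and then invoking \cite[8.3.4]{Epstein}: if $v\in H^{n,i}(X)$ is represented by a cocycle $u$ in $H^0(X,S^nA)$, then $P^0v$ is represented by the cochain $u^\ell$, i.e.\ by the Frobenius. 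That step is the real content of the proposition and is missing from your plan; once it is in place, your appeal to Lemma \ref{changetruncation} finishes the proof as you describe.
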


\begin{proof}
Recall that the Godement resolution $\cF\to S^\mathdot(\cF)$ is a functorial
simplicial resolution of any sheaf $\cF$ by flasque sheaves. 
Letting $S_i^\mathdot$
denote the total complex of the Godement resolution of 
$\tau^{\le i}R\alpha_*\muell{i}$, it follows that the product on
$\oplus\tau^{\le i}R\alpha_*\muell{i}$ induces a product pairing 
$S_i^n\oo S_j^n\to S_{i+j}^n$ for all $n$. In particular,
the Frobenius on $R\alpha_*\muell{i}$ induces a map $S_i^n\to S_{i\ell}^n$.

In \cite[11.1]{Epstein}, Epstein shows that the Godement resolution 
satisfies the conditions of his section 8. 
By functoriality, the equivariant map 
$(R\alpha_*\muell{i})^{\oo\ell}\to R\alpha_*\muell{i\ell}$
constructed after Theorem \ref{NRT} lifts to an equivariant map
$(S_i^\mathdot)^{\oo\ell}\to S_{i\ell}^\mathdot$. This is the analogue of
\cite[8.3.2]{Epstein}, and is exactly what we need in order for
the proof of \cite[8.3.4]{Epstein} to work. Thus if we represent
$v\in H^{n,i}(X)$ by a cocycle $u$ in the algebra $H^0(X,S^nA)$, 
then $P^0v$ is represented by the element $u^\ell$ of $H^0(X,S^nA)$.
Therefore $P^0$ is represented by the Frobenius.
\end{proof}

\begin{subex}\label{P0(bx)}
Recall that $b\in H^{0,d}(k)$.  
Since $P^0(b)=b^\ell$ (by \ref{P0=Frob}), the 
Cartan formula \ref{Cartan} yields $P^a(b x)=b^\ell P^a(x)$.
\end{subex}

Recall from \cite[2.60]{V11} that a {\it split proper Tate motive}
is a direct sum of Tate motives $\mathbb L^i[j]$ with $j\ge0$. 
If the weights $i$ are at least $n$ then we say the motive has
weight $\ge n$. Note that the cohomology of $\mathbb L^i[j]$ is a 
free bigraded $H^{*,*}$-module with a generator in bidegree $(2i+j,i)$.

It follows that we have a K\"unneth formula (see \cite[4.1]{W09}): 
if $\Ftr(Y)$ is a split proper Tate motive then 
$H^{*,*}(Y)$ is a free bigraded $H^{*,*}$-module, and
\begin{equation}\label{eq:Kunneth}
H^{*,*}(X\times Y)\cong H^{*,*}(X)\oo_{H^{*,*}}H^{*,*}(Y).
\end{equation}

\begin{ex}
Let $K=K(\F(i),n)$ be the Eilenberg-Mac\,Lane space classifying 
$H^{n,i}(-,\F)$; if $n\ge2i\ge0$ then $M=\Ftr(K)$
is a split proper Tate motive of weight $\ge i$, by \cite[3.28]{V11}.
It follows that $H^{*,*}(K^{\oo p})$ is the $p$-fold
tensor product of $H^{*,*}(K)$ with itself over $H^{*,*}$.

Recall from \cite[3.1]{MVW} that $\F(i)[i]$ is represented by the
abelian presheaf $\Ftr(\mathbb G_m^{\wedge i})$ so $\F(i)[n]$ is 
represented by the simplicial abelian presheaf associated to 
$\Ftr(\mathbb G_m^{\wedge i})[n-i]$ when $n\ge i$. From the adjunction 
\[
\Hom_{\text{Hot}}(X,u\F(i)[n])\cong
\Hom_{DM}(\Ftr(X),\F(i)[n])=H^{n,i}(X),
\]
we see that the classifying space $K(\F(i),n)$ of $H^{n,i}(-,\F)$ is the
simplicial abelian presheaf $G=u\F(i)[n]$ underlying $\F(i)[n]$;
see \cite[p.\,5]{V11}.
\end{ex}

\begin{lem}\label{Bott-injects}
If $\Ftr(Y)$ is a split proper Tate motive then multiplication by $b^e$ 
is an injection from $H^{p,q}(Y,\F)$ into $H^{p,q+de}(Y,\F)$,
and hence into $H^{p,q}(Y,\F)[1/b]$.
\end{lem}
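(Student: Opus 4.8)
The plan is to reduce to a purely algebraic statement about free bigraded modules over $H^{*,*}=H^{*,*}(k,\F)$ together with the periodicity element $b\in H^{0,d}(k)$, and then to observe that multiplication by $b^e$ is injective on that algebraic model. First I would invoke the K\"unneth formula \eqref{eq:Kunneth}, or rather the remark preceding it: since $\Ftr(Y)$ is a split proper Tate motive, $H^{*,*}(Y,\F)$ is a \emph{free} bigraded $H^{*,*}$-module, with a basis consisting of classes coming from the Tate summands $\mathbb L^i[j]$, each contributing a generator in a fixed bidegree $(2i+j,i)$. So it suffices to prove that multiplication by $b^e$ is injective on a free $H^{*,*}$-module, and since it acts coordinatewise on a basis, it suffices to prove it is injective on $H^{*,*}$ itself, i.e.\ that $b^e$ is a non-zero-divisor in the motivic cohomology ring $H^{*,*}(k,\F)$.

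For that last point I would argue as follows. By the Norm Residue Theorem \ref{NRT}, for $q\ge p$ we have $H^{p,q}(k,\F)\cong H_\et^p(k,\muell{q})$, and multiplication by $b$ corresponds under this identification to the isomorphism $H_\et^p(k,\muell{q})\cong H_\et^p(k,\muell{q+d})$ induced by $\muell{q}\cong\muell{q+d}$ (this is exactly the change-of-truncation description of $b$ in \eqref{eq:c.o.truncation}, combined with Theorem \ref{NRT}). Hence on the range $q\ge p$ multiplication by $b$ is already an isomorphism, and in particular injective. For the remaining range $q<p$: given $x\in H^{p,q}(k,\F)$ with $b^e x=0$, choose $N$ with $q+Nd\ge p$; then $b^{N}x\in H^{p,q+Nd}(k,\F)$ satisfies $b^e(b^Nx)=b^N(b^e x)=0$, and since $q+Nd\ge p$ we conclude $b^Nx=0$; but on this range $b^N$ is an isomorphism, so $x=0$. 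This shows $b^e$ is a non-zero-divisor on all of $H^{*,*}(k,\F)$, hence (acting basis-wise) on the free module $H^{*,*}(Y,\F)$, which is the first assertion.

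Finally, the map $H^{p,q}(Y,\F)\to H^{p,q}(Y,\F)[1/b]$ is by definition the canonical map into the colimit of the directed system $H^{p,q}(Y)\map{b}H^{p,q+d}(Y)\map{b}\cdots$. An element of $H^{p,q}(Y,\F)$ dies in the colimit precisely if it is killed by some power $b^e$; we have just shown no nonzero element is killed by any $b^e$, so this map is injective as well, giving the second assertion.

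The only genuine subtlety, and the step I expect to be the main obstacle, is keeping track of weights: the clean identification of multiplication by $b$ with an \'etale isomorphism is only available in the range $q\ge p$ (the Norm Residue / Beilinson--Lichtenbaum range), so one really does need the bootstrapping argument above to push the non-zero-divisor property down into the low-weight range $q<p$. Once one is comfortable that $\Ftr(Y)$ being a split proper Tate motive forces $H^{*,*}(Y,\F)$ to be free over $H^{*,*}(k,\F)$ with $b$ acting diagonally on a homogeneous basis, everything else is formal.
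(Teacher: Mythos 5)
Your overall strategy is the same as the paper's: use that $\Ftr(Y)$ split proper Tate makes $H^{*,*}(Y,\F)$ a free bigraded $H^{*,*}(k)$-module with homogeneous generators (equivalently, reduce to $\Ftr(Y)=\mathbb L^i[j]$), so that everything comes down to injectivity of $b^e$ on the coefficient ring $H^{*,*}(k,\F)$ in each bidegree, which in the Beilinson--Lichtenbaum range $p'\le q'$ follows because $b$ is induced by the sheaf isomorphism $\muell{q'}\cong\muell{q'+d}$ under Theorem \ref{NRT}. Up to that point your argument matches the paper's proof.

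The gap is in your treatment of the range $q<p$ for the coefficient ring. You argue: if $b^ex=0$ with $x\in H^{p,q}(k)$, pick $N$ with $q+Nd\ge p$, deduce $b^Nx=0$ from injectivity of $b^e$ in high weights, and then conclude $x=0$ because ``on this range $b^N$ is an isomorphism.'' But $b^N$ has source $H^{p,q}(k)$ with $q<p$, where the Norm Residue Theorem gives you nothing; its injectivity there is precisely the statement you are trying to prove (for the exponent $N$), so the step is circular. What actually closes this case --- and what the paper uses --- is the vanishing of motivic cohomology of a field above the weight line: $H^{p',q'}(k,\F)=0$ whenever $p'>q'$ (and for $q'<0$), since $\F(q')$ is concentrated in cohomological degrees $\le q'$ and $k$ has Nisnevich cohomological dimension $0$. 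Hence every nonzero bidegree of $H^{*,*}(k)$ already lies in the range $0\le p'\le q'$ where $b$ is an isomorphism, and no bootstrapping is needed. (The paper also first enlarges $e$ so that $(\ell-1)\mid de$, making $\muell{de}\cong\F$ and the comparison isomorphism canonical over $k$; that is a minor technical point, but worth noting.) With the vanishing statement inserted in place of your circular step, your proof becomes essentially the paper's.
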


\begin{proof}
It suffices to consider $\Ftr(Y)=\mathbb L^i[j]$. There is no harm in
increasing $e$ so that $(\ell-1)|de$. 
Set $p'=p-2i-j$ and $q'=q-i$.  Since $H^{*,*}(Y)$ is a
free $H^{*,*}(k)$-module, the assertion for $H^{p,q}(Y)$
amounts to the assertion that 
either $H^{p',q'}(k)=0$ (and injectivity is obvious) or else 
$0\le p'\le q'$ and $H^{p',q'}(k)\cong H_\et^{p'}(k,\muell{q'})$.  
In the latter case, we also have 
$H^{p',q'+de}(k)\cong H_\et^{p'}(k,\muell{q'+de})$ 
and the isomorphism is induced from the isomorphism 
$\muell{q'}\cong\muell{q'+de}$.
\end{proof}

In the next Proposition, we write $K$ for $K(\F(i),n)$. For each $p$ and $q$,
there is a canonical map $H^{p,q}(K,\F)\to H_\et^p(K,\muell{q})$.
It sends the operations $P^a$ of Definition \ref{motivic-exist}
to the \'etale operations $P^a$ of Theorem \ref{etale-exist}.

\begin{prop}\label{prop:detecting}
If $n\ge2i$, the canonical map is an injection,
from the set $H^{p,q}(K,\F)$ of motivic cohomology operations 
$H^{n,i}\to H^{p,q}$ to the set $H_\et^p(K,\muell{q})$
of \'etale cohomology operations 
$H_\et^n(-,\muell{i})\to H_\et^p(-,\muell{q})$.
\end{prop}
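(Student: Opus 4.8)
The plan is to deduce injectivity of the canonical map $H^{p,q}(K,\F)\to H_\et^p(K,\muell{q})$ from Theorem~\ref{H[1/b]}, which says that inverting $b$ on motivic cohomology of a smooth scheme recovers \'etale cohomology. The obstacle is that $K=K(\F(i),n)$ is a simplicial scheme, not a single smooth scheme, so Theorem~\ref{H[1/b]} does not apply verbatim; instead I would use that, since $n\ge2i$, the motive $M=\Ftr(K)$ is a split proper Tate motive of weight $\ge i$ (as noted in the Example citing \cite[3.28]{V11}), and then invoke Lemma~\ref{Bott-injects}.

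First I would observe that the canonical map factors through the localization map $H^{p,q}(K,\F)\to H^{p,q}(K,\F)[1/b]$, because (as set up before Theorem~\ref{H[1/b]}) the map $\F(q)\to R\alpha_*\muell{q}$ factors through $\F(q)[1/b]\to R\alpha_*\muell{q}$. So it suffices to show the localization map $H^{p,q}(K,\F)\to H^{p,q}(K,\F)[1/b]$ is injective; equivalently, that multiplication by $b^e$ is injective on $H^{p,q}(K,\F)$ for each $e$. This is precisely Lemma~\ref{Bott-injects}, applied to $Y=K$: since $n\ge2i$ makes $\Ftr(K)$ a split proper Tate motive, $H^{*,*}(K,\F)$ is a free bigraded $H^{*,*}(k)$-module and multiplication by $b^e$ is an injection $H^{p,q}(K,\F)\into H^{p,q+de}(K,\F)$, hence into the colimit $H^{p,q}(K,\F)[1/b]$.

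Then I would identify the colimit with the \'etale target. By Theorem~\ref{H[1/b]} applied at the level of motives, the map $\F(q)[1/b]\to R\alpha_*\muell{q}$ is an isomorphism in $DM$. Since $\Ftr(K)$ is (a summand of a product of) compact Tate motives, $\Hom_{DM}(\Ftr(K),-)$ commutes with the homotopy colimit defining $\F(q)[1/b]$, so
\[
H^{p,q}(K,\F)[1/b]\;\cong\;\Hom_{DM}(\Ftr(K),\F(q)[1/b]\,[p])\;\cong\;\Hom_{DM}(\Ftr(K),R\alpha_*\muell{q}[p])\;\cong\;H_\et^p(K,\muell{q}),
\]
the last isomorphism being the one recalled from \cite[10.2]{MVW}. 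Composing, the canonical map $H^{p,q}(K,\F)\to H_\et^p(K,\muell{q})$ is the localization map followed by this isomorphism, hence injective. The statement that it carries the motivic $P^a$ to the \'etale $P^a$ is already recorded just before the Proposition (it is Lemma~\ref{lem:mot-et} evaluated on the universal class $\iota_n$), so nothing further is needed there.

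The main obstacle, as flagged, is the passage from the single-scheme statement of Theorem~\ref{H[1/b]} to the simplicial object $K$; the key input that makes this work is the Tate-motive hypothesis $n\ge2i$, which simultaneously gives freeness over $H^{*,*}(k)$ (needed for Lemma~\ref{Bott-injects}) and compactness/finiteness of $\Ftr(K)$ in $DM$ (needed to commute $\Hom$ past the homotopy colimit). One should double-check that $\Ftr(K)$, while an infinite split proper Tate motive, is still built as a filtered colimit of compact Tate summands so that the commutation with homotopy colimits genuinely holds; this is the only point requiring care, and it follows from the explicit description of $\Ftr(K)$ as a sum of $\mathbb L^i[j]$'s given in the cited Example.
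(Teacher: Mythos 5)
Your proposal is correct and follows essentially the same route as the paper: the hypothesis $n\ge2i$ makes $\Ftr(K)$ a split proper Tate motive by \cite[3.28]{V11}, so Lemma \ref{Bott-injects} gives injectivity of $H^{p,q}(K,\F)$ into $H^{p,q}(K,\F)[1/b]$, which Levine's Theorem \ref{H[1/b]} identifies with $H_\et^p(K,\muell{q})$. The only differences are that the paper first reduces to the case $\zeta\in k$ by the usual transfer argument before invoking the cited splitting result, while you instead spell out the compactness/homotopy-colimit point in the identification of $H^{p,q}(K,\F)[1/b]$ with \'etale cohomology, a detail the paper leaves implicit.
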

%

\begin{proof}
By the usual transfer argument, we may assume that $\zeta\in k$.
Let $K$ denote the Eilenberg-Mac\,Lane space classifying $H^{n,i}(-,\F)$.
By \cite[3.28]{V11}, $\Ftr(K)$ is a split proper Tate motive.
By Lemma \ref{Bott-injects} and Levine's Theorem \ref{H[1/b]},
$H^{p,q}(K,\F)$ injects into 
$H^{p,q}(K,\F)[1/b]\cong H_\et^p(K,\muell{q})$.
Thus the group $H^{p,q}(K,\F)$ of motivic
cohomology operations injects into the group $H_\et^p(K,\muell{q})$
of \'etale cohomology operations.
\end{proof}

Recall that $d=[k(\zeta):k]$. By abuse of notation, if $d|m$ we write
$\zeta^m$ for the element $b^{m/d}$ of $H^{0,m}(k)$ defined 
at the start of this section.

\begin{cor}\label{PV-vs-P}
Suppose that $n\ge2i$ and $n\ge2a$. Then for $x\in H^{n,i}(X)$:
\begin{enumerate}
\item If $a\le i$, $P^a(x ) = [\zeta]^{(i-a)(\ell-1)} P_V^a(x)$;
\item If $a\ge i$, $P_V^a(x)= [\zeta]^{(a-i)(\ell-1)} P^a(x)$.
\end{enumerate}
\end{cor}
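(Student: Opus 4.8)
The strategy is to reduce the motivic identity to an \'etale identity that we already understand, exploiting that the comparison map to \'etale cohomology is injective in the relevant range. First I would invoke the transfer (norm) argument stated in the proof of Proposition \ref{prop:detecting} to reduce to the case $\zeta\in k$, so that $d=1$, all the sheaves $\muell{i}$ are (noncanonically) isomorphic to $\F$, and the periodicity element is simply $b=[\zeta]\in H^{0,1}(k)$. Then multiplication by $[\zeta]^{(\ell-1)}$ raising the weight by one is an honest isomorphism on any $H^{n',i'}$ with $i'\ge n'$, by the Norm Residue Theorem \ref{NRT}.

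Next I would set up the key diagram. Let $K=K(\F(i),n)$; since $n\ge 2i$, Voevodsky's result \cite[3.28]{V11} says $\Ftr(K)$ is a split proper Tate motive, so by Proposition \ref{prop:detecting} the canonical map $H^{p,q}(K,\F)\to H_\et^p(K,\muell{q})$ is injective. It therefore suffices to check the two asserted identities after applying this map, i.e.\ at the level of \emph{\'etale} cohomology operations on $H_\et^n(-,\muell{i})$. Under the comparison map, $P^a$ goes to the \'etale $P^a$ of Theorem \ref{etale-exist} (stated just before Proposition \ref{prop:detecting}), while $P_V^a$ goes to the \'etale $P^a$ as well, by Remark \ref{PV-Pet} (the Brosnan--Joshua observation). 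So the images of $P^a(x)$ and $P_V^a(x)$ under the comparison map are \emph{the same} \'etale class; the only discrepancy between the motivic statements is the weight and the power of $[\zeta]$. But once $\zeta\in k$, multiplication by $[\zeta]^{(\ell-1)}$ is compatible with the comparison map and becomes an isomorphism in the \'etale picture (it is induced by $\muell{q}\cong\muell{q+1}$), and in the motivic picture it is injective in the relevant weight range by Lemma \ref{Bott-injects}. Chasing: the class $P^a(x)\in H^{n+2a(\ell-1),i\ell}$ and the class $[\zeta]^{(i-a)(\ell-1)}P_V^a(x)\in H^{n+2a(\ell-1),i\ell}$ (note the weights match: $P_V^a$ lands in weight $a(\ell-1)+i$, and $(i-a)(\ell-1)+a(\ell-1)+i=i\ell$ — and symmetrically in case (2) the weight of $P^a$ is $i\ell$, and $(a-i)(\ell-1)+i\ell = a(\ell-1)+i$) become equal in $H_\et^\ast$, hence are equal in motivic cohomology by the injectivity of the comparison map on $H^{\ast,\ast}(K)$.

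Then I would untwist the transfer reduction: for general $k$, both sides of (1) and (2) are defined over $k$ and map compatibly to the corresponding classes over $k(\zeta)$ under restriction; since restriction along $k(\zeta)/k$ on $H^{\ast,\ast}(K_k)$ is injective after the usual transfer argument (as already used for Proposition \ref{prop:detecting}), the equalities over $k(\zeta)$ force the equalities over $k$, where $[\zeta]^{d}=b$ makes sense as the element called $\zeta^{m}=b^{m/d}$ in the statement. The one point requiring care — the main obstacle — is bookkeeping the weights and the exponent of $[\zeta]$ so that the classes being compared genuinely live in the same bidegree before one appeals to injectivity; this is exactly where Proposition \ref{P0=Frob} and Example \ref{P0(bx)} get used implicitly, since $P^0(b^{(i-a)})=b^{\ell(i-a)}$ controls how the Frobenius-type twist interacts with the Cartan formula. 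Everything else is a diagram chase through maps already shown to be natural and injective.
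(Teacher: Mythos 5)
Your argument is correct and is essentially the paper's own proof: check that the bidegrees match, observe that both sides map to the same \'etale class $P^a(x)$ by Lemma \ref{lem:mot-et} and Remark \ref{PV-Pet}, and conclude by the injectivity of Proposition \ref{prop:detecting}, which is exactly why the hypothesis $n\ge2i$ appears. The extra transfer reduction and the appeal to Proposition \ref{P0=Frob}/Example \ref{P0(bx)} are harmless but not needed (the general field case is handled by the convention $[\zeta]^m:=b^{m/d}$), and note the small slip that multiplication by $[\zeta]^{\ell-1}$ raises the weight by $\ell-1$, not by one; your later bookkeeping is nonetheless right.
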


\begin{proof} (Cf.\ \cite[Thm.\,1.1]{BJ})
The two sides have the same bidegree, and agree with $P^a(x)$ 
in \'etale cohomology by Lemma \ref{lem:mot-et} and 
Remark \ref{PV-Pet}.
\end{proof}

\begin{cor}\label{PVn}
If $n\ge i$ and $x\in H^{2n,i}$ then 
$P_V^n(x)=[\zeta]^{(n-i)(\ell-1)}x^\ell$.
\end{cor}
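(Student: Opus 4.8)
The plan is to deduce Corollary \ref{PVn} directly from Corollary \ref{PV-vs-P} by specializing to the case where the cohomological degree equals twice the weight of the \emph{source} of the operation, so that $P^n$ acts as an $\ell^{th}$ power. Concretely, I would apply part (2) of Corollary \ref{PV-vs-P} with the operation index $a$ equal to $n$: we are given $x\in H^{2n,i}(X)$, the element $x$ lives in cohomological degree $2n$ and weight $i$, so in the notation of \ref{PV-vs-P} the role of ``$n$'' (the degree label) is played by $2n$ and the role of ``$i$'' (the weight) is played by $i$, while the operation superscript ``$a$'' is taken to be $n$.

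The hypotheses to check are therefore $2n\ge 2i$ and $2n\ge 2n$; the second is trivial, and the first is exactly the assumption $n\ge i$. Since the operation index $n$ is at least the weight $i$ (again because $n\ge i$), we are in case (2) of Corollary \ref{PV-vs-P}, which gives
\[
P_V^n(x)=[\zeta]^{(n-i)(\ell-1)}\,P^n(x).
\]
Now $P^n$ is being applied to a class in cohomological degree $2n$, so by Proposition \ref{l-power} (with its ``$n$'' equal to our $n$, since $2n=2\cdot n$) we have $P^n(x)=x^\ell$. Substituting this gives $P_V^n(x)=[\zeta]^{(n-i)(\ell-1)}x^\ell$, which is the claimed formula.

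There is essentially no obstacle here: the entire content is bookkeeping of the two indexing conventions — the ``degree'' slot versus the ``weight'' slot in \ref{PV-vs-P}, and making sure the single inequality $n\ge i$ supplies both $2n\ge 2i$ and the case-(2) condition $a\ge i$. The one point worth stating carefully in the write-up is that Proposition \ref{l-power} requires the input class to sit in degree $2a$ for the operation $P^a$, and indeed $x\in H^{2n,i}$ has degree $2n=2\cdot n$, matching $a=n$. I would also remark that this recovers and extends \cite[Lemma 9.8]{RPO}: when $i=n$ the exponent $(n-i)(\ell-1)$ is zero and the formula collapses to $P_V^n(x)=x^\ell$, consistent with Voevodsky's statement that $P_V^n(x)=x^\ell$ on $H^{2n,n}$.
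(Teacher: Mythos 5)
Your proposal is correct and is essentially the paper's own argument: the paper likewise deduces Corollary \ref{PVn} as the case $a=n$ of Corollary \ref{PV-vs-P}, using Proposition \ref{l-power} to give $P^n(x)=x^\ell$. Your careful bookkeeping of the degree/weight slots and the verification that $n\ge i$ supplies both hypotheses is exactly the implicit content of the paper's short proof.
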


\begin{proof}
This is the case $a=n$ of Corollary \ref{PV-vs-P}, 
as $P^n(x)=x^\ell$ (Proposition \ref{l-power}).
\end{proof}

\begin{thm}\label{Bock-motivic}
The motivic operations $Q^a$ on $H^{n,i}$ are related to the 
Bockstein $\beta$ by $Q^0(x)=b^{i(\ell-1)/d}\beta(x)$ 
and $Q^a=\beta P^a$ for $a>0$.
\end{thm}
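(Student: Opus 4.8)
The plan is to reduce the two asserted identities to their \'etale analogues, established in Lemma \ref{Bock-etale}, by means of the detection principle of Proposition \ref{prop:detecting}, and then to drop the auxiliary hypothesis $n\ge2i$ by a simplicial suspension argument. First I would record the preliminary facts that make the reduction work. An operation $H^{n,i}\to H^{p,q}$ is determined by its value on the tautological class $\iota_n\in H^{n,i}(K)$, where $K=K(\F(i),n)$, so it suffices to prove the identities applied to $\iota_n$. The naturality statement of Lemma \ref{P^a_natural}, and hence the compatibility of Lemma \ref{lem:mot-et}, applies verbatim with $Q^a$ in place of $P^a$; thus the change-of-topology map $\alpha$ carries the motivic $Q^a$ (resp.\ $P^a$) to the \'etale $Q^a$ (resp.\ $P^a$). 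Finally, $\alpha$ is compatible with the Bocksteins, and it sends $b^{i(\ell-1)/d}\in H^{0,i(\ell-1)}(k)$ to the canonical generator of $H_\et^0(k,\muell{i(\ell-1)})$, so that multiplication by $b^{i(\ell-1)/d}$ becomes, after $\alpha$, the canonical isomorphism $H_\et^*(-,\muell{i})\cong H_\et^*(-,\muell{i\ell})$ of Theorem \ref{etale-exist}.

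Assume first that $n\ge2i$. By Proposition \ref{prop:detecting} the comparison map $H^{*,*}(K,\F)\to H_\et^*(K,\muell{*})$ is injective in each bidegree, so it is enough to check the identities after applying $\alpha$. Writing $\iota_n^{\et}=\alpha(\iota_n)$ for the \'etale tautological class, I would compute $\alpha(Q^0\iota_n)=Q^0(\iota_n^{\et})=\beta(\iota_n^{\et})$ using Lemma \ref{Bock-etale}, and $\alpha(b^{i(\ell-1)/d}\beta\iota_n)=\beta(\iota_n^{\et})$ as well, transported along the identification $\muell{i}\cong\muell{i\ell}$; hence $Q^0=b^{i(\ell-1)/d}\beta$ on $H^{n,i}$. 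Similarly, for $a>0$, Lemma \ref{Bock-etale} gives $\alpha(Q^a\iota_n)=Q^a(\iota_n^{\et})=\beta P^a(\iota_n^{\et})$, while $\alpha(\beta P^a\iota_n)=\beta(P^a\iota_n^{\et})$ because $\alpha$ commutes with both $P^a$ and $\beta$; these agree, so $Q^a=\beta P^a$ on $H^{n,i}$ in this range.

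For an arbitrary $n$ and any $x\in H^{n,i}(X)$, I would choose $m\ge0$ with $n+m\ge2i$ and pass to the $m$-fold simplicial suspension $\sigma^m x\in H^{n+m,i}(S^m X)$, which is injective. Using the Cartan formulas for $P^a$ and $Q^a$ together with $P^0(s)=s$, $P^a(s)=0$ for $a>0$, $Q^0(s)=0$, and $Q^a(s)=0$ for $a\ge1$ (the latter by degree reasons, since $s$ lies in $H^{1,0}$), one checks, as in the proof of Proposition \ref{S1-stable}, that $P^a$ commutes with $\sigma^m$, while $Q^a$ and $\beta$ each commute with $\sigma^m$ up to the common sign $(-1)^m$; multiplication by $b^{i(\ell-1)/d}$ also commutes with $\sigma^m$. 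Applying the already proved case $n+m\ge2i$ to $\sigma^m x$, and cancelling the common factor $(-1)^m$, then yields $Q^0(x)=b^{i(\ell-1)/d}\beta(x)$ and $Q^a(x)=\beta P^a(x)$ for general $x$.

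The main obstacle is not a single hard computation but the accumulation of bookkeeping: one must verify that the naturality and stability results proved earlier for $P^a$ (Lemmas \ref{P^a_natural} and \ref{lem:mot-et}, Proposition \ref{S1-stable}) transfer to $Q^a$ --- they do, with the same proofs --- and, more delicately, that the two classes being compared genuinely have equal images in \'etale cohomology once the canonical identifications $\muell{i}\cong\muell{i\ell}$ are written out. The one truly non-formal input is the restriction $n\ge2i$ in Proposition \ref{prop:detecting}, which is what forces the suspension detour; this is harmless because simplicial suspension raises $n$ but not $i$, so the range $n+m\ge2i$ is always reached.
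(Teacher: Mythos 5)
Your reduction to the universal class $\iota_n\in H^{n,i}(K)$, $K=K(\F(i),n)$, and your treatment of the range $n\ge2i$ agree with the paper's proof: both rest on the injectivity of $H^{p,q}(K,\F)\to H_\et^p(K,\muell{q})$ from Proposition \ref{prop:detecting}, the \'etale identities of Lemma \ref{Bock-etale}, and the observation that Lemma \ref{P^a_natural} (hence the compatibility of Lemma \ref{lem:mot-et}) already covers the $Q^a$. Where you diverge is the range $n<2i$. The paper needs no suspension there: for $Q^0$ the target $H^{n+1,i\ell}(K)$ is already in the \'etale zone, because $n<2i$ forces $n+1\le 2i\le i\ell$, so the Norm Residue Theorem \ref{NRT} gives an isomorphism; for $a>0$ it first composes with $(P^0)^\nu$, i.e.\ multiplication by a power of $b$ (Proposition \ref{P0=Frob}), which is injective by Lemma \ref{Bott-injects} and pushes the weight into the \'etale zone. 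Your route instead raises the degree by simplicial suspension; that is a genuinely different mechanism, and for $a>0$ in the range $n\ge2i$ your direct appeal to Proposition \ref{prop:detecting} is in fact simpler than the paper's Frobenius trick.

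However, as written the suspension step has a gap. The assertion that $Q^a$ commutes with suspension up to the sign $(-1)^m$ needs a Cartan-type formula for the $Q^a$ (applied to $Q^a(s\cup x)$), and in this paper that formula is only recorded after, and as a consequence of, Theorem \ref{Bock-motivic} itself (see the remark following Theorem \ref{Cartan}); invoking it here is circular. The gap is reparable: the $D$-level formula $D_a(u\cup v)=\sum\pm D_s(u)\cup D_t(v)$ obtained inside the proof of Theorem \ref{Cartan}, together with $D_k(s)=0$ for $k\ne\ell-1$ (since $H^{p,0}(\Delta^1,\partial\Delta^1)$ vanishes for $p\ne1$), does yield the suspension behavior of $Q^a$ --- but the sign inspection you wave at is exactly the delicate point, since a mismatch with the suspension sign of $\beta$ would make your argument output $Q^a=-\beta P^a$ on the low range. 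So either carry out that sign check explicitly (or import Epstein's $Q$-Cartan formula directly, as with Theorem \ref{Cartan}), or replace the suspension detour by the paper's NRT/Frobenius mechanism, which avoids any input about $Q^a$ beyond Lemmas \ref{Bock-etale} and \ref{P^a_natural}.
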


\begin{proof}
Set $K=K(\F(i),n)$, so that motivic cohomology operations 
$H^{n,i}\to H^{p,q}$ correspond to elements of $H^{p,q}(K)$. In particular,
the identity on $H^{n,i}$ is represented by the canonical element $\iota$
of $H^{n,i}(K)$, and the motivic cohomology operation 
$Q^0-b^{i(\ell-1)/d}\beta$ is represented by the element 
$Q^0(\iota)-b^{i(\ell-1)/d}\beta(\iota)$ of 
$H^{n+1,i\ell}(K)$.
The map $H^{n+1,i\ell}(K)\to H_\et^{n+1}(K,\muell{i\ell})$
is an isomorphism if $n<i\ell$ by Theorem \ref{NRT}, and 
is an injection if $n\ge2i$ by Proposition \ref{prop:detecting}.
By Lemma \ref{lem:mot-et}, we have a commutative diagram
\[\begin{CD}
H^{n,i}(K) @>{Q^0-b^{i(\ell-1)/d}\beta}>> H^{n+1,i\ell}(K)  \\
@VVV   @V{\text{into}}VV \\
H_\et^{n,i}(K,\muell{i}) @>{0=Q^0-b^{i(\ell-1)/d}\beta}>> 
H_\et^{n+1}(K,\muell{i\ell}).
\end{CD}\]
The bottom map is zero by Lemma \ref{Bock-etale}.
It follows that $Q^0=b^{(\ell-1)/d}\beta$.
Similarly, if we set $N=n+a(\ell-1)+1$ then we have a diagram
\[\begin{CD}
H^{n,i}(K) @>{Q^a-\beta P^a}>> H^{N,i\ell}(K) 
@>{(P^0)^\nu}>{\text{into}}> H^{N,i\ell^{\nu+1}}(K)\\
@VVV  @VVV @V{\cong}VV \\
H_\et^{n,i}(K,\muell{i}) @>{0=Q^a-\beta P^a}>> H_\et^{N}(K,\muell{i\ell})
@>{\cong}>> H^{N}(K,\muell{i\ell^{\nu+1}}).
\end{CD}\]
The right vertical is an isomorphism by the Norm Residue Theorem \ref{NRT};
the upper right horizontal map is an injection by Proposition \ref{P0=Frob}
and Lemma \ref{Bott-injects},
and the lower left horizontal map is zero by Lemma \ref{Bock-etale}.
It follows that $Q^a=\beta P^a$.
\end{proof}

\medskip
\section{Borel's Theorem}
In order to go from $H^{1,*}$ to $H^{n,*}$, we need a slight
generalization of Borel's theorem \cite[6.21]{McCleary}, one which
accounts for the coefficient ring $H^{*,*}=H^{*,*}(\Spec k)$.

\begin{defn} 
Let $H^*$ be a graded-commutative $\F$-algebra. 
If $W^*$ is a graded $H^*$-algebra,
an {\it $\ell$-simple system of generators} of $W^*$ over $H^*$ is a
totally ordered set of elements $x_i$,
such that $W^*$ is a free left $H^*$-module on the monomials
$x_{i_1}^{m_1}\cdots x_{i_k}^{m_k}$, where the $i's$ are in order and
$0\le m_j<\ell$ (with $m_j\le1$ if $\deg(x_j)$ is odd).
\end{defn}

\begin{thm}\label{Borel}
Let $H^*$ be a graded-commutative $\F$-algebra with $H^0=\F$, and
suppose that $\{E_r^{*,*},d_r\}$ is a $1^{st}$-quadrant spectral sequence
of graded-commutative $H^*$-algebras converging to $H^*$. 
Set $V^*=E_2^{*,0}$ and $W^*=E_2^{0,*}$, and suppose that 
\\
(i) $E_2^{*,*}\cong W^*\otimes_{H^*}V^*$ as algebras, and that
(ii) the $H^*$-algebra $W^*$ has an $\ell$-simple system of generators
$\{ x_i\}$, each of which is transgressive.

Then $V^*$ is the tensor product of $H^*$ and
a free graded-commutative $\F$-algebra on generators
$y_i=\tau(x_i)$ and (when $\ell\ne2$ and $\deg(x_j)$ is even) 
$z_j=\tau(y_j\oo x_j^{\ell-1})$. (Here $\tau$ is the transgression.)
\end{thm}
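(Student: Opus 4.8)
The plan is to reduce this to the classical Borel theorem \cite[6.21]{McCleary} by treating $H^*$ as ``scalars'' throughout, \emph{provided} we can carry out the usual inductive Zeeman-style dévissage in the category of $H^*$-modules rather than $\F$-modules. First I would set up the induction that one uses in the topological case: one builds, step by step, intermediate algebras by adjoining the transgressive generators $x_i$ one at a time (in the given total order). More precisely, for each index $i$ one forms the subalgebra $B_i\subseteq W^*$ generated over $H^*$ by $\{x_j : j\le i\}$, and one constructs an auxiliary spectral sequence with $E_2$-page $B_i\otimes_{H^*}(\text{something})$ mapping to $\{E_r^{*,*}\}$, exactly as in the proof of Borel's theorem, but now with $H^*$ in the role of the ground field. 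Since $H^0=\F$ and $H^*$ is graded-commutative, the Künneth isomorphism over $H^*$ holds for the relevant free modules, so the comparison theorem for spectral sequences applies verbatim.

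The key steps, in order: (1) Observe that because $x_i$ is transgressive and $E_2^{*,*}\cong W^*\otimes_{H^*}V^*$, the transgression $\tau(x_i)=y_i$ lands in $V^*=E_2^{*,0}$, and when $\deg x_i$ is even and $\ell\ne 2$, the element $y_i\otimes x_i^{\ell-1}$ is again transgressive with $\tau(y_i\otimes x_i^{\ell-1})=z_i$ — this is the Kudo transgression phenomenon, and it is exactly where the ``$\ell$-simple'' hypothesis (with $0\le m_j<\ell$) gets used. (2) For a single even-degree transgressive generator $x$, the sub-spectral-sequence it generates over $H^*$ is the $H^*$-linear analogue of the spectral sequence of a two-stage Postnikov tower / the Serre spectral sequence of $K(\F,\deg x)\to *\to K(\F,\deg x+1)$; its $E_\infty$ is $H^*$, and by the classical computation (Cartan, \cite{Cartan54}) the fiber contributes precisely a polynomial algebra on $y=\tau(x)$ together with a polynomial algebra on $z=\tau(y\otimes x^{\ell-1})$ when $\ell\ne2$ (and an exterior generator when $\deg x$ is odd, or at $\ell=2$). (3) Assemble these: by induction on $i$, using the algebra splitting (i) at each stage to keep the $E_2$-pages in the tensor-product form, one shows $V^*\cong H^*\otimes(\text{free graded-commutative }\F\text{-algebra on the }y_i\text{ and }z_j)$. (4) Conclude by a counting/Poincaré-series argument over $H^*$: both sides are free $H^*$-modules, and the $\ell$-simple basis of $W^*$ matches up, rank by rank in each bidegree, with the claimed basis of $V^*$ under the differentials, forcing the map to be an isomorphism.

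The main obstacle will be step (2)–(3): making the single-generator computation and the induction genuinely work \emph{relative to $H^*$}, i.e. checking that all the intermediate spectral sequences are spectral sequences of $H^*$-algebras, that their $E_2$-pages remain free $H^*$-modules of the expected form (so Künneth over $H^*$ and the comparison theorem apply), and that the transgression is $H^*$-linear so that $\tau(x_i)$, $\tau(y_i\otimes x_i^{\ell-1})$ really are the asserted generators with nothing extra. Once one grants that the entire Zeeman argument is $H^*$-linear — which is plausible precisely because $H^0=\F$ and everything in sight is free over $H^*$ — the conclusion follows as in the classical case. I would flag that the hypothesis $H^0=\F$ is essential here (it is what makes $E_2^{*,0}$ and $E_2^{0,*}$ behave like ``base'' and ``fiber'' cohomology), and that the restriction to odd $\ell$ for the $z_j$ generators is exactly the classical restriction in Kudo's theorem.
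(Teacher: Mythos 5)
Your proposal is essentially the paper's own argument: the paper's proof consists of the single assertion that the proof of Borel's theorem in \cite[6.21]{McCleary} goes through in this setting, which is exactly your plan of running the classical Zeeman--Borel induction $H^*$-linearly, using $H^0=\F$ and freeness over $H^*$ so that the K\"unneth and comparison steps carry over. The details you flag (Kudo transgression for the $z_j$, $H^*$-linearity of the transgression) are precisely what the paper leaves implicit, so there is nothing to correct.
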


\begin{proof}
The proof of Borel's Theorem in \cite[6.21]{McCleary} goes through.
\end{proof}
%


We use the bar construction to form the bisimplicial classifying 
spaces $B_\mathdot G$ (with $G^{p}$ in simplicial degree $p$) and
$E_\mathdot G$ (with $G^{p+1}$ in simplicial degree $p$). 
We write $\pi$ for the canonical projection $E_\mathdot G\to B_\mathdot G$.
The Leray spectral sequence becomes
\begin{equation}\label{eq:LSS}
E_2^{p,q} = H^p(B_\mathdot G,R^q\pi_*A) 
        \Rightarrow  H^{p+q}(E_\mathdot G,A).
\end{equation}

\begin{prop}\label{Leray}
Suppose that $G$ is a connected simplicial sheaf of groups on $T$ and 
$A$ is a sheaf of $\F$-algebras satisfying the K\"unneth condition that
\[
H^*(U,A)\otimes_{H^*(T,A)} H^*(G,A) \map{\cong} H^*(U\times G,A)
\]
is an isomorphism for all $U$. Then the Leray spectral sequence
\eqref{eq:LSS} satisfies condition (i) of Borel's Theorem
with $E_2^{p,q}=H^p(B_\mathdot G,A)\otimes_{H^*(T,A)} H^q(G,A)$.
\end{prop}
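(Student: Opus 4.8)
The plan is to identify the $E_2$-term of the Leray spectral sequence \eqref{eq:LSS} with $H^p(B_\mathdot G,A)\otimes_{H^*(T,A)} H^q(G,A)$ and then to recognize this identification as an algebra isomorphism, which is precisely condition (i) of Borel's Theorem \ref{Borel}. The starting point is that $E_\mathdot G\to B_\mathdot G$ is (up to homotopy) a principal fibration with fiber $G$: in simplicial degree $p$ we have $E_pG=G^{p+1}\to B_pG=G^p$, and the fiber over the basepoint is a copy of $G$. Thus the higher direct image sheaves $R^q\pi_*A$ restricted to each simplicial level $B_pG=G^p$ compute $H^q$ of the fiber $G$ with coefficients in $A$, level-wise.

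First I would compute $R^q\pi_*A$ explicitly. Over the $p$-th level $B_pG=G^p$, the map $\pi$ is the projection $G^{p+1}\to G^p$, and the sheaf $R^q\pi_*A$ is the sheafification of $U\mapsto H^q(U\times G, A)$. By the K\"unneth hypothesis, $H^q(U\times G,A)\cong\bigoplus_{r+s=q} H^r(U,A)\otimes_{H^*(T,A)} H^s(G,A)$ (more precisely the stated isomorphism $H^*(U,A)\otimes_{H^*(T,A)}H^*(G,A)\xrightarrow{\cong}H^*(U\times G,A)$ holds for all $U$), so that $R^q\pi_*A$ is the locally constant sheaf with stalk $H^q(G,A)$ — equivalently, $R^q\pi_*A = A\otimes_{H^*(T,A)} H^q(G,A)$ as a sheaf on $B_\mathdot G$, with $G$ acting trivially because $G$ is connected (so $\pi_0 G$ is trivial and there is no monodromy). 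Plugging this into \eqref{eq:LSS} gives
\[
E_2^{p,q}=H^p\bigl(B_\mathdot G,\, A\otimes_{H^*(T,A)} H^q(G,A)\bigr).
\]
Since $H^q(G,A)$ is a (flat, being free by the Tate-motive/K\"unneth setup) $H^*(T,A)$-module and cohomology of $B_\mathdot G$ commutes with such base change, this is $H^p(B_\mathdot G,A)\otimes_{H^*(T,A)} H^q(G,A)$, which gives the displayed formula and hence the additive form of condition (i).

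Next I would upgrade this to an algebra isomorphism. The multiplicative structure on the Leray spectral sequence comes from the diagonal of $E_\mathdot G$ together with the product on $A$; on $E_2$ it is the cup product $H^p(B_\mathdot G,R^q\pi_*A)\otimes H^{p'}(B_\mathdot G,R^{q'}\pi_*A)\to H^{p+p'}(B_\mathdot G,R^{q+q'}\pi_*A)$ induced by the pairing $R^q\pi_*A\otimes R^{q'}\pi_*A\to R^{q+q'}\pi_*A$. Under the identification of $R^*\pi_*A$ with $A\otimes_{H^*(T,A)} H^*(G,A)$, this pairing is the product of the product on $A$ with the product on $H^*(G,A)$; this is exactly the statement that the K\"unneth isomorphism $H^*(U,A)\otimes_{H^*(T,A)}H^*(G,A)\cong H^*(U\times G,A)$ is a ring isomorphism (which it is, being induced by the external product, as in the Corollary to Theorem \ref{Kunneth}). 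Chasing this through the edge maps shows that the product on $E_2^{*,*}$ is the tensor-product ring structure on $H^*(B_\mathdot G,A)\otimes_{H^*(T,A)}H^*(G,A)$, which is condition (i).

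The main obstacle I anticipate is the careful identification of $R^q\pi_*A$ as a sheaf with its multiplicative structure, including the verification that $G$ acts trivially (using connectedness of $G$) and that the level-wise K\"unneth isomorphisms are compatible with the simplicial face and degeneracy maps of $B_\mathdot G$, so that they glue to an isomorphism of simplicial sheaves and hence of the sheaves $R^q\pi_*A$. Everything else — flatness of $H^*(G,A)$ over $H^*(T,A)$, commuting cohomology with base change, and matching the cup products — is routine once this compatibility is in place. In fact, since the proof of Borel's Theorem itself (Theorem \ref{Borel}) only needs the conclusion of this proposition as an input hypothesis, it suffices here to record the $E_2$-identification and its ring structure; no spectral-sequence comparison beyond the standard Leray multiplicativity is required.
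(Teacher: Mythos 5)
Your proposal is correct and follows essentially the same route as the paper: identify $R^q\pi_*A$ as the sheafification of $U\mapsto H^q(U\times G,A)$, use the K\"unneth hypothesis (with the sheafification of $H^r(-,A)$ being $A$ for $r=0$ and zero for $r>0$) to get $R^q\pi_*A\cong A\otimes_{H^*(T,A)}H^q(G,A)$, deduce the $E_2$-formula, and invoke the standard multiplicativity of the Leray spectral sequence (which the paper settles by citing Massey). The only small omission is that the paper also uses connectedness of $G$ to check $E_2^{p,0}=H^p(B_\mathdot G,A)$ and $E_2^{0,q}=H^q(G,A)$, so that the formula really matches condition (i) of Borel's Theorem with $V^*=E_2^{*,0}$ and $W^*=E_2^{0,*}$; your use of connectedness for ``no monodromy'' should be supplemented by this observation.
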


\begin{proof}
For simplicity of notation, let us write $\otimes_H$ for 
$\otimes_{H^*(T,A)}$. We first claim that the higher direct images
$R^q\pi_*(A)$ are $A\otimes_H H^*(G,A)$.
To see this, recall that $R^q\pi_*(A)$ is the sheafification of the
presheaf that to a map $U\to B_pG$ associates $H^q(\pi^{-1}U,A)$,
where $\pi^{-1}U=E_\mathdot G\times_{B_pG}U$ is  $U\times G$. 
By hypothesis,  $H^*(\pi^{-1}U,A)$ is 
$H^*(U,A)\otimes_H H^*(G,A)$. The claim follows, since 
sheafification commutes with $\otimes_H H^*(G,A)$ and the
sheaf associated to $H^q(-,A)$ is $A$ if $q=0$ and zero for $q>0$.

Thus we have $E_2^{p,q}=H^p(B_\mathdot G,A)\otimes_H H^q(G,A)$.
Since $H^0(B_\mathdot G,A)=H^0(T,A)$, we have
\[
E_2^{0,q}=H^0(B_\mathdot G,A)\otimes_H H^q(G,A)=H^q(G,A).
\]
Because $G$ is connected, $H^0(G,A)=H^0(T,A)$ and hence
$E_2^{p,0}=H^p(B_\mathdot G,A)$.
The fact that the spectral sequence is multiplicative follows from
the fact that $A$ is a sheaf of algebras, and the work of Massey \cite{Mas}.
\end{proof}

\begin{Kudosthm}\label{Kudo}
Suppose $G$ and $A$ satisfy the hypotheses of Proposition \ref{Leray}.
If $x\in H^n(G,A)$ transgresses to $y\in H^{n+1}(B_\mathdot G,A)$ then
\begin{enumerate}
\item $\beta(x)$ transgresses to $-\beta(y)$;
\item $P^a(x)$ transgresses to $P^a(y)$; and
\item if $n=2a$ then $x^{\ell-1}\oo y$ transgresses to $-\,Q^a(y)$.
\end{enumerate}
\end{Kudosthm}

Any simplicially stable operation commutes with the transgression;
see \cite[6.5]{McCleary}.  Hence part (2) of
Theorem \ref{Kudo} is immediate whenever we know that
$P^a$ is simplicially stable. This is so for the operations $P^a$
in \'etale and motivic cohomology (by \ref{S-stable} and \ref{S1-stable}).

\begin{proof}  (Cf.\ \cite[3.4]{May})
As in the proof of Theorem \ref{Kunneth}, we fix a quasi-isomorphism
$A\map{\sim} I^*$. Let $f=\pi^*$ and $g=i^*$ be the canonical maps 
$I(G)\lmap{g}I(E_\mathdot G)\lmap{f}I(B_\mathdot G)$
coming from $G\map{i}E_\mathdot G\map{\pi} B_\mathdot G$.
The assertion that $x$ transgresses to $y$ 
means that there is a cocycle $b$ in $I^{n+1}(B_\mathdot G)$ 
representing $y$, and an element $u$ in $I^n(E_\mathdot G)$, 
such that $f(b)=du$ and $g(u)$ is a cocyle representing $x$,

Since the Bockstein satisfies $g(\beta u)=\beta g(u)$ and
$f(\beta b)=\beta(du)=-d(\beta u)$, we see that
$\beta(x)$, which is represented by $g(\beta u)$, 
transgresses to $-\beta(y)$.

Recall from Section \ref{sec:theta} that $b$ and $u$ determine 
a cocyle $P^a(b)$ in $I^*(B_\mathdot G)$ representing $P^a(y)$
and an element $P^a(u)$ in $I^*(E_\mathdot G)$ so that
$P^a(x)$ is represented by $P^a g(u)=gP^a(u)$.
By Lemma \ref{dP-Pd}, we have
\[
f P^a(b) = P^a f(b) = P^a(du) = dP^a(u).
\]
It follows that $P^a(x)$ trangresses to $P^a(y)$.

Since $b$ is a cocycle, $Q^a(b)$ represents $Q^a(y)$, and 
by Lemma \ref{dP-Pd} we have
\[
f Q^a(b)=Q^a f(b) = Q^a(du) = -\, d(Q^a u). 
\]
Thus the class of $Q^a(u)$ transgresses to $-\,Q^a(y)$, and
it suffices to show that $Q^a(u)$ 
represents $x^{\ell-1}\otimes y$ under the isomorphism
$E_2^{p,q}\cong H^p(B_\mathdot G)\oo H^q(G)$ of \ref{Leray}. 

Recall from \eqref{eq:P=D} that $\nu_n=(-1)^r m!^{-n}$, where $m=(\ell-1)/2$.  
We have $\nu_n=(-1)^a$, because $n=2a$, 
$(m!)^2=(-1)^{m+1}$ and $r\equiv am\pmod2$. 
We now follow p.\,167 of \cite{May} up to (9). 
Starting from $u\in I^n(X)$, 
May produces elements $t_i$ in $I^{\oo\ell}(X)$
and a family of elements $\{ c_a\}$, $\{ c'_a\}$ in
$C_*\oo I^{\oo\ell}(X)$, depending naturally on $u$, such that 
\[
Q_M^a(u)=(-1)^{a}\nu(1-n)\;\theta(c'_a)=m!\;\theta(c'_a).
\]
The analysis of the terms in $c'_a$ on top of p.\,171 of \cite{May}
shows that there is a term $c''$ such that $c'-d(c'')$ is $(-1)^m m!\;z$
plus terms mapped by $\theta$ into lower parts of the filtration,
where $z= e_0\oo u\oo \cdots \oo u \oo du$, and that
$\theta(z)$ represents $x^{\ell-1}\oo y$. Therefore, up to terms in
lower parts of the filtration we have
$Q_M^a(u)=m!\;\theta(c'_a)=(-1)^m(m!)^2\theta(z)=-\theta(z)$.
Since we saw in Corollary \ref{PQagree} that
$Q^a(u) = Q_M^a(u)$,  
the result follows.
\end{proof}

\medskip\goodbreak

We illustrate the use of Proposition \ref{Leray} with the \'etale topology.
First, consider the \'etale sheaf $G=\mu_\ell$. 
If $\mu_\ell$ is connected then it does not satisfy the 
K\"unneth condition of Proposition \ref{Leray} for $U=\Spec(\bar k)$.
Indeed, $H^0(\mu_\ell,\F)=\F$ yet
$H^0(G\times\Spec{\bar k},\F)=\prod_1^\ell \F$. 
%
However, things change if we consider the \'etale sheaf 
$\cO=\bigoplus_{i=0}^{d-1} \muell{i}$ of Section \ref{sec:Aetale}.

\goodbreak
\begin{lem}\label{Leray-etale}
$H_\et^*(X\times\muell{i},\cO)\cong 
H_\et^*(X,\cO)\otimes_{H^*(k,\cO)} H_\et^*(\muell{i},\cO).$
\end{lem}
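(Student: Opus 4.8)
The statement is a K\"unneth-type isomorphism, so the natural strategy is to reduce it to the already-established K\"unneth statement with constant coefficients via Shapiro's Lemma \eqref{eq:Shapiro}. First I would base-change to $k(\zeta)$. Recall from Section \ref{sec:Aetale} that $\cO = Ri_*\F$ for $i\colon\Spec(k(\zeta))_\et\to\Spec(k)_\et$, and hence for any simplicial scheme $Y$ over $k$ one has $H_\et^*(Y,\cO)\cong H_\et^*(Y(\zeta),\F)$ by \eqref{eq:Shapiro}. Applying this with $Y=X\times\muell{i}$, $Y=X$, $Y=\muell{i}$, and $Y=\Spec k$, the claimed isomorphism becomes
\[
H_\et^*\bigl((X\times\muell{i})(\zeta),\F\bigr)\;\cong\;
H_\et^*(X(\zeta),\F)\otimes_{H_\et^*(k(\zeta),\F)} H_\et^*(\muell{i}(\zeta),\F).
\]
Here I use that base change commutes with products: $(X\times_k\muell{i})(\zeta)\cong X(\zeta)\times_{k(\zeta)}\muell{i}(\zeta)$, exactly as in the proof of Theorem \ref{thm:etaleops}. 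So it suffices to prove the K\"unneth isomorphism over $k(\zeta)$, i.e.\ after reducing to the case $\zeta\in k$, where $\muell{i}\cong\F$ is the constant sheaf, which is connected.

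With $\zeta\in k$, the group scheme $\muell{i}$ becomes the constant group $\Z/\ell$, so $B\muell{i}=B(\Z/\ell)$ and the K\"unneth condition I need is that multiplication
\[
H_\et^*(X,\F)\otimes_{H_\et^*(k,\F)} H_\et^*(\Z/\ell,\F)\;\map{\cong}\;H_\et^*(X\times\Z/\ell,\F)
\]
is an isomorphism. But $\Z/\ell$ is a disjoint union of $\ell$ copies of $\Spec k$, so $X\times_k(\Z/\ell)$ is a disjoint union of $\ell$ copies of $X$; both sides are then free $H_\et^*(X,\F)$-modules of rank $\ell$ (the right side because cohomology sends finite disjoint unions to products, the left side because $H_\et^*(\Z/\ell,\F)=H_\et^0(\Z/\ell,\F)$ is free of rank $\ell$ over $H_\et^*(k,\F)$), and the comparison map matches the two bases. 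This is the key computational point, and it is genuinely elementary once the disjoint-union description is in hand.

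The main obstacle is bookkeeping rather than substance: one must be careful that the Shapiro isomorphism \eqref{eq:Shapiro} is compatible with the ring structure and with the external product that defines the K\"unneth map, so that the reduction to constant coefficients over $k(\zeta)$ is an isomorphism of $H_\et^*(k(\zeta),\F)$-algebras and not merely of modules. This compatibility follows because $\cO=Ri_*\F$ is a map of sheaves of rings and $i_*$ is lax monoidal, so the identification $H_\et^*(Y,\cO)\cong H_\et^*(Y(\zeta),\F)$ respects cup products and base-change-compatible external products; the Corollary to Theorem \ref{Kunneth} already records the algebra version of the analogous statement there. Granting this, the displayed chain of isomorphisms collapses to the constant-coefficient K\"unneth isomorphism, which holds by the disjoint-union argument above, completing the proof.
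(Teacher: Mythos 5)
Your proposal is correct, but it takes a genuinely different route from the paper's proof. The paper stays over $k$: it decomposes the sheaf $\F[\muell{i}]$ (free on the scheme $\muell{i}$, which is split by $k(\zeta)$) into invertible locally constant summands $\muell{\alpha}$, computes $H_\et^n(X\times\muell{i},\muell{q})\cong\oplus_\alpha H_\et^n(X,\muell{q-\alpha})$ via the adjunction $\Ext^n(\F[X]\oo\F[\muell{i}],\muell{q})\cong\Ext^n(\F[X],\mathcal{RH}om(\F[\muell{i}],\muell{q}))$, and then checks by an explicit commutative diagram of $\Ext$-pairings that this identification is exactly the asserted tensor decomposition over $H^*(k,\cO)$. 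You instead push all four cohomology rings through the Shapiro isomorphism \eqref{eq:Shapiro} to $k(\zeta)$, where $\muell{i}$ becomes the constant scheme $\coprod_\ell\Spec k(\zeta)$ and the K\"unneth statement is immediate from the disjoint-union decomposition. Your route trivializes the computation, but it shifts the real content onto the compatibilities you flag at the end: that $\cO\cong i_*\F$ is an isomorphism of sheaves of $\F$-algebras when $\cO$ carries the graded product $\muell{a}\oo\muell{b}\to\muell{a+b}$, and that the Shapiro isomorphism intertwines pullbacks, cup products, and the $H^*(k,\cO)$-module structures. The first point is true but deserves an actual check, since the a priori Maschke splitting of $i_*\F$ into isotypic pieces is a product (idempotent) decomposition rather than a graded one; one verifies it by noting that the character eigenfunctions $f_a$ in $\mathrm{Map}(G,\F)$, $G=\mathrm{Gal}(k(\zeta)/k)$, satisfy $f_af_b=f_{a+b}$ under pointwise multiplication, so the identification with $\oplus_\alpha\muell{\alpha}$ can be normalized multiplicatively. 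The paper's $\Ext$-diagram performs this bookkeeping over $k$ once and for all, which is what your ``lax monoidal'' remark is standing in for; what your approach buys in exchange is a proof whose only geometric input is that $\muell{i}(\zeta)$ is a split finite scheme, in the same spirit as the base-change argument in Theorem \ref{thm:etaleops}. Two small slips that do not affect the argument: the constant scheme over $k(\zeta)$ is of course \emph{not} connected (its being a disjoint union of copies of $\Spec k(\zeta)$ is precisely what you use), and $H_\et^*(\Z/\ell,\F)$ is $\prod_\ell H_\et^*(k(\zeta),\F)$, not concentrated in degree $0$; what you mean is that it is free over $H_\et^*(k(\zeta),\F)$ on the degree-zero idempotents.
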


\begin{proof}
As an \'etale sheaf of $\F$-modules, constant over $k(\zeta)$,
$\F[\muell{i}]$ is a direct sum of the locally constant sheaves 
$\muell{\alpha}\!,$ each of which is an invertible object.
Because $\F[X\times\muell{i}]\cong \F[X]\otimes\F[\muell{i}]$,
$H_\et^n(X\times\muell{i},\muell{q})$ equals
\begin{align*}
\Ext^n(\F[X]\otimes\F[\muell{i}],\muell{q})\cong&\
\Ext^n(\F[X],\mathcal{RH}om(\F[\muell{i}],\muell{q})) \\ 
\cong&\ \Ext^n(\F[X],\mathcal{RH}om(\F,\oplus\muell{q-\alpha}))\\
\cong& \oplus_\alpha \Ext^n(\F[X],\muell{q-\alpha})
\cong \oplus_\alpha H_\et^n(X, \muell{q-\alpha}).
\end{align*}
The pairing $H_\et^*(X,\cO)\oo_{\F} H_\et^*(\muell{i},\cO)\to
H_\et^*(X\times\muell{i},\cO)$ is the direct sum over $\alpha$, 
$s$ and $t$ of the top row in the commutative diagram
\[\begin{CD}
\Ext^*(\F[X],\muell{s})\oo_{\F}\Ext^*(\muell{\alpha},\muell{t}) @>>>
\Ext^*(\F[X]\oo\muell{\alpha},\muell{s+t}) \\
@V{\cong}VV @VV{\cong}V\\
\Ext^*(\F[X],\muell{s})\oo_{\F}\Ext^*(\F,\muell{t-\alpha})  @>>>
\Ext^*(\F[X],\muell{s+t-\alpha}).
\end{CD}\]
Since $H_\et^*(k,\cO)=\Ext^*(\F,\cO)\cong\oplus_t\Ext(\F,\muell{t-\alpha})$
for each $\alpha$, setting $s=q-\alpha$ and
summing over $s$ and $t$ yields the result.
\end{proof}


\begin{cor}\label{LSSetale}
If $Y$ is a coproduct of schemes which are finite products of $\muell{i}$, then
\[ 
H_\et^*(X\times Y,\cO) \cong H_\et^*(X,\cO)\oo_{H^*(k,\cO)} H_\et^*(Y,\cO).
\]
\end{cor}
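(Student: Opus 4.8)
The plan is to deduce Corollary \ref{LSSetale} from Lemma \ref{Leray-etale} by an induction on the structure of $Y$, exploiting the fact that everything in sight is additive in each factor and that $\F[-]$ converts products of schemes into tensor products of sheaves. First I would reduce to the case where $Y$ is connected, i.e. $Y$ is a finite product $\muell{i_1}\times\cdots\times\muell{i_r}$: since $\cO$-cohomology and the $\F$-linear tensor product both commute with finite coproducts, and $\F[X\times(Y_1\sqcup Y_2)]\cong\F[X\times Y_1]\oplus\F[X\times Y_2]$, the general statement follows once it is known for each connected component of $Y$.

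For a connected $Y=\muell{i_1}\times\cdots\times\muell{i_r}$ I would induct on $r$. The base case $r=0$ (so $Y=\Spec k$) is trivial, and the case $r=1$ is exactly Lemma \ref{Leray-etale}. For the inductive step, write $Y=Y'\times\muell{i_r}$ with $Y'$ a product of $r-1$ factors. Applying Lemma \ref{Leray-etale} with $X$ replaced by $X\times Y'$ gives
\[
H_\et^*(X\times Y'\times\muell{i_r},\cO)\cong
H_\et^*(X\times Y',\cO)\otimes_{H^*(k,\cO)} H_\et^*(\muell{i_r},\cO),
\]
and then the inductive hypothesis rewrites $H_\et^*(X\times Y',\cO)$ as $H_\et^*(X,\cO)\otimes_{H^*(k,\cO)}H_\et^*(Y',\cO)$. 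Associativity of $\otimes_{H^*(k,\cO)}$ together with a second application of Lemma \ref{Leray-etale} (this time with $X=Y'$, to identify $H_\et^*(Y',\cO)\otimes_{H^*(k,\cO)}H_\et^*(\muell{i_r},\cO)$ with $H_\et^*(Y,\cO)$) finishes the step. One should check that the iterated isomorphism is the one induced by the external product pairing and not merely an abstract isomorphism of $H^*(k,\cO)$-modules; this is automatic because the isomorphism in Lemma \ref{Leray-etale} is, by its proof, the external product map, and external products are associative.

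The main obstacle is bookkeeping rather than anything deep: one must be careful that the tensor products are taken over the correct base ring $H^*(k,\cO)=H_\et^*(\Spec k,\cO)$ throughout, and that the graded (and weight) indices line up — in particular that $H_\et^*(Y,\cO)$ is flat, indeed free, as an $H^*(k,\cO)$-module, so that the iterated tensor products behave well and no $\operatorname{Tor}$ terms intrude. This freeness is visible from the computation in the proof of Lemma \ref{Leray-etale}, where $H_\et^*(\muell{i},\cO)$ is exhibited as a free $H^*(k,\cO)$-module (the relevant $\F[\muell{i}]$ being a sum of invertible sheaves $\muell{\alpha}$), and freeness is preserved under the iterated external product. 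Given that, the Künneth condition of Proposition \ref{Leray} propagates through the induction without difficulty.
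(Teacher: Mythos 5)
Your proposal is correct and is exactly the argument the paper intends: the corollary is stated without proof precisely because it follows from Lemma \ref{Leray-etale} by iterating over the factors $\muell{i_j}$ (taking $X\times Y'$ in place of $X$) and using additivity of $H_\et^*(-,\cO)$ and of $\otimes_{H^*(k,\cO)}$ over finite coproducts. Your remarks on external-product compatibility and freeness are sound but not needed, since one is merely composing isomorphisms of $H^*(k,\cO)$-modules.
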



\begin{ex}\label{Het2-ops}
The ring of all \'etale cohomology operations from 
$H_\et^{2}(-,\muell{i})$ to $H_\et^*(-,\muell{*})$
is the free left $H_\et^{*}(k,\cO)$-module on generators in 
$H^{*,i}(K_2)$: monomials in the identity
($\text{id}\in H_\et^2(K_2,\muell{i})$), $\beta$, 
the $P^I\beta$ and the $\beta P^I\beta$
($P^I=P^{\ell^\nu}\!\cdots P^\ell P^1$).
This result, proven in Theorem \ref{thm:etaleops} above, 
can also be obtained from the Leray spectral sequence \eqref{eq:LSS}.

Each term in the simplicial sheaf $B_\mathdot \muell{i}$ is a 
coproduct of products of $\muell{i}$, so Corollary \ref{LSSetale}
and Proposition \ref{Leray} imply that the Leray
spectral sequence satisfies condition (i) of Borel's Theorem \ref{Borel}.
The explicit description of  $H_\et^*(B\muell{i},\cO)$ in
Proposition \ref{Het1-ops} as $H_\et^*(k,\cO)\oo\F[u,v]/(u^2)$, 
shows that it has an $\ell$-simple system of 
generators: $u$, and the $x_\nu=v^{\ell^\nu}$ for $\nu\ge0$.
The transgression $\tau$ sends $u$ to $\iota$, so $v=\beta u$ 
transgresses to $-\beta$, by Kudo's Theorem \ref{Kudo}(1).
Thus condition (ii) is also satisfied, and Borel's Theorem states
that $H_\et^*(K_2,\cO)$ is the free 
graded-commutative $H^{*,*}$-algebra on generators 
$\iota\in H^{2,i}(K_2)$,
$$
y_\nu=\tau(x_\nu)\in H^{2\ell^\nu+1}(K_2,\muell{i\ell^\nu})
\quad\text{and}\quad 
z_\nu=\tau(x_\nu^{\ell-1}\oo y_\nu)\in 
H^{2\ell^{\nu+1}\!+2}(K_2,\muell{i\ell^{\nu+1}}).
$$
Note that $y_0=\beta(\iota)$. 
Since $x_{\nu+1}=x_\nu^{\ell^\nu}=P^{\ell^\nu}x_\nu$, 
Kudo's Theorem \ref{Kudo}(2) and an inductive argument show that
$y_{\nu+1}$ is $P^{\ell^\nu}y_\nu$
and also $P^{\ell^\nu}\!\cdots P^\ell P^1\beta$.
This completes the proof for $\ell=2$.

For $\ell>2$, it remains to show that $-z_\nu$ is 
$\beta P^{\ell^\nu}(y_\nu)=\beta P^{\ell^\nu}\!\cdots P^\ell P^1\beta$.
This follows from Kudo's Theorem \ref{Kudo}(3) and Lemma \ref{Bock-etale}.
\end{ex}

\bigskip
\section{Motivic operations on weight~1 cohomology}\label{sec:weight1}

We now turn to natural operations defined on the motivic cohomology groups
with weight~1, \ie $H^{n,1}(X)=H^{n,1}(X,\F)$. 
We begin with the case $n=1$.

Let $\mu_\ell$ be the group scheme of $\ell^{th}$ roots of unity.
On pp.\,130--131 of \cite{MV}, Morel and Voevodsky define a simplicial
Nisnevich sheaf $B_{et}\mu_\ell$ and observe that it classifies
the \'etale cohomology group $H_\et^1(-,\mu_\ell)$, and hence the 
motivic group $H^{1,1}$ by Theorem \ref{NRT},
in the sense that $[X_+,B_{et}\mu_\ell]\cong H^{1,1}(X)$ for every
smooth simplicial scheme $X$ over $k$.

Following \cite[p.\,17]{RPO}, we write $B\mu_\ell$ for the
geometric classifying space of $\mu_\ell$, constructed in 
\cite[p.\,133]{MV} 
(where the notation $B_{gm}\mu_\ell$ was used).
By \cite[4.2.7]{MV}, $B\mu_\ell$ is $\mathbb{A}^1$-equivalent to 
$B_{et}\mu_\ell$, so it also classifies $H^{1,1}$.

When $\ell=2$, the generator $[\zeta]$ of $H^{0,1}(k)=\mu_2(k)$ and 
its Bockstein, the element
$[-1]\in H^{1,1}(k)=k^\times/k^{\times\ell}$, 
play an important role.

\begin{prop}\label{prop:Bmu}
There are elements $u\in H^{1,1}(B\mu_\ell)$, $v\in H^{2,1}(B\mu_\ell)$ 
such that
\[ H^{*,*}(B\mu_\ell) \cong \begin{cases}
H^{*,*}(k)\oo \F[u,v]/(u^2), &\ell\ne2 \\
 H^{*,*}(k)\oo \F[u,v]/(u^2+[-1]u+[\zeta ] v), &\ell=2.
\end{cases}\]
Thus every cohomology operation on $H^{1,1}(X)$ is uniquely a sum of
the operations $x\mapsto cx^\varepsilon\beta(x)^m$, 
where $c\in H^{s,j}(k)$, $m\ge0$ and $0\le\varepsilon\le1$.
\end{prop}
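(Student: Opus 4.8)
The plan is to compute $H^{*,*}(B\mu_\ell)$ by running the Leray spectral sequence \eqref{eq:LSS} for the universal $\mu_\ell$-bundle $E_\mathdot\mu_\ell\to B_\mathdot\mu_\ell$, exactly as in Example \ref{Het2-ops} but one degree lower, and then reading off the operations on $H^{1,1}$ from the classifying space. First I would record that $B\mu_\ell$ (equivalently $B_{et}\mu_\ell$) classifies $H^{1,1}$, and that $E_\mathdot\mu_\ell$ is contractible, so the spectral sequence converges to $H^{*,*}(k)$. To apply Borel's Theorem \ref{Borel} I need condition (i): this is supplied by Proposition \ref{Leray} once I verify the K\"unneth condition for the sheaf of $\F$-algebras computing motivic cohomology and the group scheme $\mu_\ell$. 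Here one must be slightly careful — as the remark before Lemma \ref{Leray-etale} points out, the naive \'etale $\mu_\ell$ fails K\"unneth — but over the Nisnevich site, working with $B\mu_\ell$ and using the Tate-motive K\"unneth formula \eqref{eq:Kunneth} together with the fact (from \cite[3.28]{V11} or the splitting in \cite{RPO}) that $\Ftr(B\mu_\ell)$ is a split proper Tate motive, the analogue of Corollary \ref{LSSetale} holds. So the fibre input is $H^{*,*}(\mu_\ell)$ over $H^{*,*}(k)$, and for a connected base the $E_2$-page is $H^{*,*}(B_\mathdot\mu_\ell)\otimes_{H^{*,*}}H^{*,*}(\mu_\ell)$.

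Next I would identify the $\ell$-simple system of generators of the fibre cohomology $W^* = H^{*,*}(\mu_\ell)$ over $H^{*,*}(k)$ and check each generator is transgressive. For $\ell$ odd, $H^{*,*}(\mu_\ell)\cong H^{*,*}(k)\otimes\F[u]/(u^2)$ with $u$ in bidegree $(1,1)$ the tautological class (coming from $\mu_\ell\hookrightarrow \mathbb G_m$, i.e. $H^{1,1}$ of a point of the fibre), which is transgressive by dimension reasons; for $\ell=2$ the extra classes $[\zeta]\in H^{0,1}(k)$ and $[-1]\in H^{1,1}(k)$ already live in the base ring $H^{*,*}(k)=H^{*,*}$ and so do not affect the system of fibre generators, but they will reappear in the multiplicative extension. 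Borel's Theorem \ref{Borel} then yields that $V^* = E_2^{*,0} = H^{*,*}(B\mu_\ell)$ is the free graded-commutative $H^{*,*}(k)$-algebra on $u := \tau(\text{fibre generator})\in H^{1,1}(B\mu_\ell)$ together with, for $\ell$ odd, $v := \tau(u\otimes u^{\ell-1})\in H^{2,1}(B\mu_\ell)$; here I use that the transgression raises cohomological degree by one and, by simplicial stability of $P^a$ (Proposition \ref{S1-stable}) and the Kudo Transgression Theorem \ref{Kudo}(1) and (3), $v$ is $\beta(u)$ up to sign when $\ell$ is odd. This gives $H^{*,*}(B\mu_\ell)\cong H^{*,*}(k)\otimes\F[u,v]/(u^2)$ for $\ell\ne2$.

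For $\ell=2$ the Steenrod/Kudo bookkeeping is the same, but the relation $u^2 = 0$ must be replaced: in $H^{*,*}(B\mu_2)$ one has $u^2 = [-1]u + [\zeta]v$ rather than $u^2=0$, reflecting the fact that $Sq^1$ and the class $[-1]$ enter the transgression. I would derive this by comparing with the known computation of $H^{*,*}(B\mu_2)$ (it appears in \cite{RPO}, \cite{V-MC/l}, or can be checked via the \'etale realization and the comparison isomorphism of Remark \ref{et-top}), or by directly tracking the multiplicative extension in the spectral sequence: the class $u^2$ lands in filtration degree $2$, and the only possible correction terms are $H^{*,*}(k)$-multiples of $u$ and $v$ in the right bidegree, forcing the stated relation with coefficients $[-1]$ and $[\zeta]$ pinned down by specializing to $\Spec k$ and to the \'etale realization. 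Finally, the statement about cohomology operations on $H^{1,1}(X)$ follows formally: operations $H^{1,1}\to H^{*,*}$ correspond to elements of $H^{*,*}(B\mu_\ell)$, and the algebra description above shows every such element is uniquely $\sum c\, x^\varepsilon \beta(x)^m$ with $c\in H^{s,j}(k)$, $m\ge0$, $\varepsilon\in\{0,1\}$ — using $v = \pm\beta(u)$ for $\ell$ odd, and for $\ell=2$ using the relation to rewrite any $u^2$ in terms of $[-1]u + [\zeta]v = [-1]u + [\zeta]\beta(u)$, which is still of the required form.

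The main obstacle I expect is the $\ell=2$ multiplicative extension: establishing the precise relation $u^2 + [-1]u + [\zeta]v = 0$ (rather than merely $u^2 \in H^{*,*}(k)\cdot u + H^{*,*}(k)\cdot v$) requires either invoking the external computation of $H^{*,*}(B\mu_2)$ or carefully controlling the differentials and the coefficients via base change to $\Spec k$ and \'etale realization; everything else (the K\"unneth input, applicability of Borel's Theorem, and the identification of the transgressed generators with Bocksteins via Kudo) is routine given the machinery already assembled in the paper.
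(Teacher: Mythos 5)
Your plan has a genuine gap, and it is at the very first step: you propose to compute $H^{*,*}(B\mu_\ell)$ by running the Leray spectral sequence for $E_\mathdot\mu_\ell\to B_\mathdot\mu_\ell$ and applying Borel's Theorem, but the fibre of that map is the finite group scheme $\mu_\ell$ itself, i.e.\ the zero-dimensional scheme $\Spec k[x]/(x^\ell-1)$. Its motivic cohomology is just that of the \'etale algebra $k\times k(\zeta)\times\cdots$; in particular your claimed fibre computation $H^{*,*}(\mu_\ell)\cong H^{*,*}(k)\oo\F[u]/(u^2)$ with an exterior class $u\in H^{1,1}(\mu_\ell)$ is false (you have in effect confused $\mu_\ell$ with $\mathbb G_m$, or with $B\mu_\ell$ itself). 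With no positive-degree fibre generators there is nothing to transgress, so Borel's Theorem \ref{Borel} yields no information about the base; moreover $\mu_\ell$ is not connected (over $\bar k$ it is $\ell$ disjoint points) and fails the K\"unneth hypothesis of Proposition \ref{Leray} — this is exactly the failure the paper flags just before Lemma \ref{Leray-etale}, and it is not cured by passing to the Nisnevich site or by the fact that $\Ftr(B\mu_\ell)$ is split proper Tate, since that concerns the base, not the fibre. What the spectral sequence of $E_\mathdot\mu_\ell\to B_\mathdot\mu_\ell$ actually encodes is descent/Shapiro data (cohomology of $B\mu_\ell$ with coefficients in the regular representation collapsing to $H^{*,*}(k)$), not a Borel-type presentation of $H^{*,*}(B\mu_\ell,\F)$.

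The paper does not prove Proposition \ref{prop:Bmu} this way: it is quoted as the special case $F_\mathdot=S^0$ of Voevodsky's computation \cite[6.10]{RPO}, which rests on the geometric model $B_{gm}\mu_\ell$ (colimits of quotients of $\mathbb A^N\setminus 0$, related to $\mathbb P^\infty=B\mathbb G_m$ by a Gysin-type argument). That geometric input is precisely where the $\ell=2$ relation $u^2=[-1]u+[\zeta]v$ comes from; it is not a multiplicative-extension problem you can settle inside your proposed spectral sequence, because that spectral sequence never produces the algebra in the first place. Your fallback of ``comparing with the known computation of $H^{*,*}(B\mu_2)$ in \cite{RPO}'' is in fact the paper's entire proof, at which point the fibration scaffolding is superfluous. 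Note also the logical order in the paper: Proposition \ref{prop:Bmu} is the base case, and the Borel/Kudo machinery (Proposition \ref{Leray}, Theorems \ref{Borel} and \ref{Kudo}) is used only to ascend from $B\mu_\ell=K(\F(1),1)$ to $K(\F(1),n)$ for $n\ge2$ (Example \ref{Het2-ops}, Corollary \ref{H21-ops}, Theorem \ref{Hn1-ops}); it cannot generate the base case itself. The final deduction you make — that operations on $H^{1,1}$ correspond to $H^{*,*}(B\mu_\ell)$ via representability and are therefore the sums $cx^\varepsilon\beta(x)^m$ — is fine once the algebra structure (including $v=\beta u$) is known, but that knowledge must come from \cite{RPO}, not from the proposed fibration argument.
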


\begin{proof}
This is the special case $F_\mathdot=S^0$ in Proposition 6.10 of \cite{RPO}.
Note that the operation $cx^\varepsilon(\beta x)^m$ has bidegree
$(s+2m+\varepsilon-1,j+m+\varepsilon-1)$.
\end{proof}

As in Example \ref{Het2-ops}, we can use this as the starting point
to describe all motivic operations on $H^{*,1}$. 
For example, the motive of $G=B\mu_\ell$ is 
a split proper Tate motive, so \eqref{eq:Kunneth} holds. 
\edit{tweaked 12/22}
By Proposition \ref{Leray} the 
Leray spectral sequence has the form
\begin{equation}\label{mss:p=2}
E^{p,q}_2 = H^{p,*}(B_\mathdot B\mu_\ell) \oo_{H^{*,*}(k)} H^{q,*}(B\mu_\ell)
\Rightarrow H^{p+q,*}(E_\mathdot B\mu_\ell) \cong H^{p+q,*}(k).
\end{equation}

\begin{cor}\label{H21-ops}
If $\ell\ne2$, the ring of cohomology operations on $H^{2,1}$ is 
the tensor product of $H^{*,*}(k,\F)$ and the
free graded-commutative algebra generated by: 
the identity of $H^{2,1}$, the Bockstein $\beta$, 
the $P^I\beta$ 
and the $\beta P^I\beta$ 
where $P^I=P^{\ell^\nu}\!\cdots P^\ell P^1$.

For $\ell=2$, the ring of cohomology operations on $H^{2,1}$ is 
the tensor product of $H^{*,*}(k,\mathbb F_2)$ and the
free graded-commutative algebra generated by: 
the identity of $H^{2,1}$, $Sq^1$, ..., $Sq^I=Sq_V^I$, where
$Sq^I=Sq^{2^\nu}\!\cdots Sq^2 Sq^1$.
\end{cor}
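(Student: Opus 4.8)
The plan is to follow the template already used for the \'etale group $H_\et^2$ in Example \ref{Het2-ops}, replacing \'etale cohomology of the sheaf $\cO$ by motivic cohomology and using Proposition \ref{prop:Bmu} in place of Proposition \ref{Het1-ops}. The classifying space is $G=B\mu_\ell$, and the iterated bar construction gives the bisimplicial space $B_\mathdot B\mu_\ell$, whose realization classifies $H^{2,1}$. First I would record that $\Ftr(B\mu_\ell)$ is a split proper Tate motive, so the K\"unneth formula \eqref{eq:Kunneth} applies and, by Proposition \ref{Leray}, the Leray spectral sequence \eqref{mss:p=2} satisfies condition (i) of Borel's Theorem \ref{Borel}, with $W^* = H^{*,*}(B\mu_\ell)$ and $V^* = H^{*,*}(B_\mathdot B\mu_\ell) = H^{*,*}(K_2,\F)$ the ring we want.

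Next I would exhibit an $\ell$-simple system of transgressive generators for $W^* = H^{*,*}(B\mu_\ell)$ over $H^{*,*}(k)$. For $\ell\ne2$, Proposition \ref{prop:Bmu} gives $W^*\cong H^{*,*}(k)\oo\F[u,v]/(u^2)$, so the system is $u$ together with $x_\nu=v^{\ell^\nu}$ for $\nu\ge0$, exactly as in Example \ref{Het2-ops}; the transgression sends $u$ to the fundamental class $\iota\in H^{2,1}(K_2)$, and since $v=\beta u$, Kudo's Theorem \ref{Kudo}(1) shows $v$ transgresses to $-\beta(\iota)$, so $v$ is transgressive and hence so is each $x_\nu$ (each $x_\nu$ is a $P^I$ applied to $v$ by the relation $x_{\nu+1}=v^{\ell^{\nu+1}}=P^{\ell^\nu}x_\nu$, and $P^{\ell^\nu}$ is simplicially stable by Proposition \ref{S1-stable}, hence commutes with transgression). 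Then Borel's Theorem \ref{Borel} identifies $V^*$ as the tensor product of $H^{*,*}(k)$ with the free graded-commutative algebra on $\iota=\tau(u)$, the $y_\nu=\tau(x_\nu)$, and the $z_\nu=\tau(x_\nu^{\ell-1}\oo y_\nu)$. An inductive computation using Kudo \ref{Kudo}(2) identifies $y_\nu$ with $P^{\ell^{\nu-1}}\cdots P^\ell P^1\beta$ (with $y_0=\beta$), and Kudo \ref{Kudo}(3) together with Theorem \ref{Bock-motivic} identifies $-z_\nu$ with $\beta P^{\ell^{\nu-1}}\cdots P^\ell P^1\beta$. This is precisely the claimed list of generators: the identity, $\beta$, the $P^I\beta$, and the $\beta P^I\beta$.

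For $\ell=2$, Proposition \ref{prop:Bmu} gives the twisted relation $W^*\cong H^{*,*}(k)\oo\F[u,v]/(u^2+[-1]u+[\zeta]v)$. Here I would note that, as an $H^{*,*}(k)$-module, $W^*$ is still free on $1,u$ and the powers of $v$, so $\{u\}\cup\{x_\nu=v^{2^\nu}\}$ is still an $\ell$-simple (i.e.\ $2$-simple) system of generators; the twisting in the multiplicative relation does not affect the module structure that Borel's Theorem uses. As above, $u$ transgresses to $\iota$ and (using $Sq^1 u = v$ modulo the relation, together with the $\ell=2$ case of Kudo and the fact $Sq^1=\beta$) $v$ is transgressive. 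Borel's Theorem then gives $V^*=H^{*,*}(k)\oo$ (free graded-commutative algebra on $\iota$ and the $y_\nu=\tau(x_\nu)$), with $y_\nu=Sq^{2^{\nu-1}}\cdots Sq^2 Sq^1$ by Kudo \ref{Kudo}(2); there is no $z_\nu$ because $\deg(x_\nu)$ is even only when $\ell>2$ in the statement of Borel's Theorem. Finally I would invoke Remark \ref{PV-Pet} (Brosnan--Joshua) to identify $Sq^I$ with Voevodsky's $Sq^I_V$, giving the stated form.

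The main obstacle is the bookkeeping in the $\ell=2$ case: one must check that the twisted multiplicative relation $u^2+[-1]u+[\zeta]v=0$ in $H^{*,*}(B\mu_2)$ still leaves a bona fide $2$-simple system of generators (it does, since freeness as an $H^{*,*}(k)$-module is unaffected) and that $v$ is genuinely transgressive despite $u^2\neq 0$ — here one uses that $Sq^1 u = v + ([-1]\text{-correction})$ and that $[-1]$, $[\zeta]$ lie in $H^{*,*}(k)$, so transgression of $v$ follows from transgression of $\iota$, its Bockstein, and $H^{*,*}(k)$-linearity of the transgression in this spectral sequence. Apart from that, everything is a direct transcription of Example \ref{Het2-ops} with the inputs Proposition \ref{prop:Bmu}, Proposition \ref{S1-stable}, Theorem \ref{Bock-motivic}, and Remark \ref{PV-Pet} substituted in.
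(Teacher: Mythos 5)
Your argument is correct and is essentially the paper's own proof: identify $B_\mathdot B\mu_\ell$ with $K(\F(1),2)$, use the split-proper-Tate K\"unneth formula and Proposition \ref{Leray} to verify Borel's hypotheses for the Leray spectral sequence \eqref{mss:p=2}, take the $\ell$-simple system $u$, $x_\nu=v^{\ell^\nu}$ from Proposition \ref{prop:Bmu}, and identify the transgressions via Kudo's Theorem \ref{Kudo} and Theorem \ref{Bock-motivic}. Your extra bookkeeping at $\ell=2$ (freeness of the module structure despite the twisted relation, and the identification $Sq^I=Sq_V^I$ in this weight range) is a welcome elaboration of points the paper leaves implicit, but it is the same route.
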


\begin{proof}
Since the simplicial scheme $B_\mathdot B\mu_\ell$ is 
$K(\F(1),2)$,
we merely need to compute its motivic cohomology using \eqref{mss:p=2}.

By Proposition \ref{prop:Bmu},
$H^{*,*}(B\mu_\ell)$ has an $\ell$-simple system of generators over
$H^{*,*}$ consisting of $u$ and the $x_\nu=v^{\ell^\nu}$ for $\nu\ge0$.
Since \eqref{eq:Kunneth} holds, Proposition \ref{Leray} implies
that the hypotheses of Borel's Theorem \ref{Borel} and
Kudo's Theorem \ref{Kudo} hold for \eqref{mss:p=2}.
Therefore $H^{*,*}(B_\mathdot B\mu_\ell)$ is the tensor product of
$H^{*,*}$ and the free graded-commutative $\F$-algebra on generators
$\iota$, $y_\nu=\tau(x_\nu)$ --- and $z_\nu$ if $\ell>2$. 
Since $u$ transgresses to $\iota$, $x_0=v=\beta(u)$
and $x_{\nu+1}=P^{\ell^\nu}x_\nu$, Kudo's Theorem \ref{Kudo} 
implies (by induction) that $y_0=\beta(\iota)$,
$y_{\nu+1}=P^{\ell^\nu}y_\nu=P^I\beta(\iota)$ and (using
Theorem \ref{Bock-motivic}) that $z_\nu$ is 
$-\beta P^I\beta(\iota)$.
\end{proof}
\goodbreak

\smallskip

To describe cohomology operations on $H^{n,1}$, we use the algebra
$H_\top^*(K_n)$, defined in \ref{def:PI}. We bigrade it by giving 
it the weight grading that $P^I$ has weight $\ell^k-1$, where
$I=(\epsilon_0,s_1,\epsilon_1,...,s_k,\epsilon_k)$.

\begin{thm}\label{Hn1-ops}
For each $n\ge1$, the ring of all motivic cohomology operations on
$H^{n,1}$ is isomorphic to the free left $H^{*,*}$-module
$H^{*,*}(k)\otimes H^*_\top(K_n)$ in which the $P^I$ are bigraded
according to Definition \ref{motivic-exist}.

Thus every cohomology operation on $H^{n,1}(X)$ is a sum of the operations
$x\mapsto c(P^{I_1}x)(P^{I_2}x)\cdots(P^{I_s}x)$, where $c\in H^{*,*}(k)$
and each $I_j$ satisfies the excess condition of Definition \ref{def:PI}.
\end{thm}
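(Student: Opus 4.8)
The plan is to proceed by induction on $n$, exactly mirroring the structure of the proof of Corollary \ref{H21-ops} but with a general bar construction. The base case $n=1$ is Proposition \ref{prop:Bmu} (together with its identification of the classifying space of $H^{1,1}$). For the inductive step, I would take $G = B_{\mathdot}B_{\mathdot}\cdots B_{\mathdot}\mu_\ell$ iterated $n-1$ times, so that $G$ classifies $H^{n-1,1}$ and $B_{\mathdot}G$ classifies $H^{n,1}$. The key input is that the motive of each such $G$ is a split proper Tate motive: each stage of the bar construction consists of products of copies of a split proper Tate motive, so by the K\"unneth isomorphism \eqref{eq:Kunneth} the hypotheses of Proposition \ref{Leray} are satisfied, and hence the Leray spectral sequence \eqref{eq:LSS} for $E_\mathdot G \to B_\mathdot G$ satisfies condition (i) of Borel's Theorem \ref{Borel} with $E_2^{p,q} = H^{p,*}(B_\mathdot G)\otimes_{H^{*,*}} H^{q,*}(G)$.

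Next I would verify condition (ii) of Borel's Theorem: that $W^* = H^{*,*}(G)$ has an $\ell$-simple system of transgressive generators over $H^{*,*}$. By the inductive hypothesis $H^{*,*}(G) = H^{*,*}(k)\otimes H^*_\top(K_{n-1})$, and Cartan's ring $H^*_\top(K_{n-1})$ is by Definition \ref{def:PI} a free graded-commutative algebra on the classes $P^I(\iota_{n-1})$ with $I$ admissible of excess $<n-1$ (or excess $=n-1$ with $\epsilon_0=1$). A free graded-commutative algebra on generators $g_i$ has the $\ell$-simple system consisting of $g_i$ together with the $g_i^{\ell^t}$ for $t\ge1$ when $\deg g_i$ is even — exactly as in Examples \ref{Het2-ops} and the proof of Corollary \ref{H21-ops}. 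Each generator $P^I(\iota_{n-1})$ is transgressive because the $P^a$ and $\beta$ are simplicially stable (Proposition \ref{S1-stable} and the discussion after Kudo's Theorem \ref{Kudo}), hence commute with the transgression; and $\iota_{n-1}$ itself transgresses to $\iota_n$. Then Borel's Theorem \ref{Borel} identifies $V^* = H^{p,*}(B_\mathdot G) = H^{n,*}(K_n)$ as the tensor product of $H^{*,*}$ with a free graded-commutative algebra on the transgressions $\tau(x_i)$ and (for $\ell\ne2$, even-degree generators) the $\tau(x_i^{\ell-1}\otimes \tau(x_i))$.

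Finally I would use Kudo's Theorem \ref{Kudo} together with Theorem \ref{Bock-motivic} to match these transgression classes with the expected monomials in the $P^I$. If $x = P^I(\iota_{n-1})$ then $\tau(x) = P^I(\iota_n)$ by stability; if $x' = x^{\ell^t} = P^{\ell^k}\cdots P^{\ell^{k-t+1}}(x)$ (using $P^{\ell^k}x = x^\ell$ when $x$ has the right degree, from Proposition \ref{l-power}) then $\tau(x')$ is the corresponding longer admissible monomial, by induction using Kudo \ref{Kudo}(2); and $\tau(x^{\ell-1}\otimes\tau(x))$ is $-Q^a(\tau(x)) = -\beta P^a(\tau(x))$ by Kudo \ref{Kudo}(3) and Theorem \ref{Bock-motivic}. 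Bookkeeping then shows the resulting generators are precisely the $P^J(\iota_n)$ for admissible $J$ of excess $<n$ (or excess $n$ with $\epsilon_0=1$) — this is the combinatorial heart of Cartan's description in Definition \ref{def:PI}, and is identical to the topological computation in \cite[6.19]{McCleary}, so I would simply cite that the bookkeeping is the same. The bigrading assertion follows by tracking weights through the transgressions, noting that $P^a$ multiplies weight by $\ell$ and $\beta$ preserves it, which is consistent with the stated weight grading on $H^*_\top(K_n)$. The main obstacle is ensuring the inductive hypothesis supplies exactly the $\ell$-simple system and transgressivity needed at each stage, and that the weight indexing propagates correctly; but since each step is formally parallel to the already-completed $n=2$ case, no genuinely new difficulty arises beyond careful bookkeeping.
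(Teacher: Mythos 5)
Your proposal follows the paper's proof essentially step for step: induction on $n$, the Leray spectral sequence \eqref{eq:LSS} for $E_\mathdot K_n\to B_\mathdot K_n=K_{n+1}$, Borel's Theorem \ref{Borel} plus Kudo's Theorem \ref{Kudo} and Theorem \ref{Bock-motivic} to identify the transgressed generators, the citation of McCleary for the combinatorial bookkeeping, and the weight count showing each $P^a$ multiplies the weight by $\ell$. The base cases, the $\ell$-simple system (generators $P^I(\iota)$ together with $\ell^\nu$-th powers of the even-degree ones), and the transgressivity argument via simplicial stability all match the paper.

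The one step where your justification would not survive scrutiny is the claim that the motive of the fiber $K(\F(1),n-1)$ is split proper Tate ``because each stage of the bar construction consists of products of copies of a split proper Tate motive.'' Having split proper Tate stages only gives a (skeletal) filtration of $\Ftr(K(\F(1),n-1))$ whose graded pieces are sums of Tate motives $\mathbb L^i[j]$; the extensions between such pieces are governed by groups of the form $\Hom_{DM}(\mathbb L^a[b],\mathbb L^c[d+1])\cong H^{*,*}(k)$, which do not vanish in general (e.g.\ $H^{1,1}(k)=k^\times/k^{\times\ell}$), so splitness of the realization is not formal. This is precisely the nontrivial input for which the paper invokes Voevodsky's theorem that $\Ftr(K(\F(1),n))$ is a split proper Tate motive (\cite[3.28]{V11}); without it the K\"unneth condition \eqref{eq:Kunneth}, hence condition (i) of Borel's Theorem via Proposition \ref{Leray}, is not established and the spectral-sequence argument does not start. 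Replacing your heuristic by that citation makes your argument the paper's proof.
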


\begin{proof}
We proceed by induction on $n$, the cases $n=1,2$ being given above.
Set $K_{n}=K(\F(1),n)$, so $K_{n+1}=B_\mathdot(K_n)$, 
\edit{proof tweaked 12/22}
and suppose inductively that
the algebra $H^{*,*}(K_n)$ is given as described in the theorem,
so that it has an $\ell$-simple system of generators consisting of the
$P^I(\iota_n)$ with $I$ admissible and $e(I)<n$ 
(or $e(I)=n$ and $\epsilon_1=1$),  
and $\ell^\nu$ powers of the $P^I(\iota_n)$ of even degree. 

Since $\Ftr(K_n)$ is a split proper Tate motive by \cite[3.28]{V11},
the K\"unneth condition \eqref{eq:Kunneth} of Proposition \ref{Leray}
holds. Hence the 
hypotheses of Borel's Theorem \ref{Borel} are satisfied and the
Leray spectral sequence \eqref{eq:LSS} has the form
\begin{equation*}
E^{p,q}_2 = H^{p,*}(K_{n+1}) \oo_{H^{*,*}(k)} H^{q,*}(K_n)
\Rightarrow H^{p+q,*}(E) \cong H^{p+q,*}(k).
\end{equation*}
Therefore 
$H^{*,*}(K_{n+1})$ is the tensor product of $H^{*,*}$ and
a free graded-commutative $\F$-algebra on certain generators; 
it remains to establish that
they are the ones describe in the theorem. But, except for weight
considerations, this is exactly the same as in the topological case,
as presented on p.\,200 of \cite{McCleary}. Of course, the weight of the
$x_I=P^I(\iota_n)$ is the same as the weight of $y_I=P^I(\iota_{n+1})$. 
Inspection of the weights of the new generators 
$P^{\ell^t s}\cdots P^s y_I$ (when $x_I$ has degree $2s$)
shows that each additional $P^{\ell^t s}$ multiplies the weight
by $\ell$, as required.
\end{proof}

\newpage
\section{Motivic operations on degree 1 cohomology}\label{sec:degree1}

We now turn to operations defined on $H^{1,*}$. Here we encounter
new cohomology operations arising from the Norm Residue Theorem \ref{NRT},
representing a negative twist. Here are a couple of examples.

\begin{ex}\label{ex:eta}
There are operations $H^{1,r(\ell-1)}(X)\map{\sim} H^{1,1}(X)$,
since both groups are naturally isomorphic to $H_\et^1(X,\mu_\ell)$.
An element $\eta\in H_\et^1(k,\muell{2-i})$ determines a natural 
transformation $H^{1,i}(X)\to H^{2,2}(X)$. 
\end{ex}

\paragraph{\bf The case $k=k(\zeta)$}
If $k$ contains a primitive $\ell^{th}$ root of unity $\zeta$, 
the classification is immediate from Proposition \ref{prop:Bmu}.
Let $[\zeta]$ be the class of $\zeta$ in $H^{0,1}(k)\cong\mu_\ell$.

\begin{prop}\label{H1i_withzeta}
Suppose that $\zeta\in k$ and $i>1$. Then there is a natural isomorphism
$\gamma:H^{1,i}(X)\map{\sim} H^{1,1}(X)$, and $[\zeta]^{i-1}\cup\gamma(x)=x$.

Every motivic cohomology operation on $H^{1,i}$ is uniquely a sum of the 
operations $x\mapsto c(\gamma x)^\varepsilon\beta(\gamma x)^m$, 
where $c\in H^{*,*}(k)$, $0\le\varepsilon\le1$ and $m\ge0$.
\end{prop}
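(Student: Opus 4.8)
The plan is to deduce everything from the case $i=1$, namely Proposition~\ref{prop:Bmu}, via the Norm Residue Theorem. First I would construct the isomorphism $\gamma$. Since $i>1$, we have $1<i$, so the Norm Residue Theorem~\ref{NRT} gives $H^{1,i}(X)\cong H_\et^1(X,\muell{i})$; similarly $H^{1,1}(X)\cong H_\et^1(X,\mu_\ell)$. Because $\zeta\in k$, the choice of $\zeta$ determines an isomorphism of \'etale sheaves $\muell{i}\cong\mu_\ell$, hence an isomorphism $H_\et^1(X,\muell{i})\cong H_\et^1(X,\mu_\ell)$; composing gives the natural isomorphism $\gamma\colon H^{1,i}(X)\map{\sim}H^{1,1}(X)$. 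The identity $[\zeta]^{i-1}\cup\gamma(x)=x$ is then just the statement that cup product with $[\zeta]\in H^{0,1}(k)$ realizes the inverse change-of-sheaf isomorphism $\muell{1}\cong\muell{i}$ on \'etale cohomology; this is the content of the discussion around \eqref{eq:c.o.truncation} and Theorem~\ref{NRT}, which says cupping with $b=[\zeta]$ (here $d=1$) is an isomorphism $H^{n,j}\to H^{n,j+1}$ when $j\ge n$ — applicable since $1=n\le j$ for all intermediate weights.

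Next I would identify the classifying object. Since $\gamma$ is a natural isomorphism of the representable functor $H^{1,i}(-)$ with $H^{1,1}(-)$, the space $K(\F(i),1)$ classifying $H^{1,i}$ is $\mathbb{A}^1$-equivalent to $B\mu_\ell$ (which classifies $H^{1,1}$ by \cite[4.2.7]{MV} as recalled before Proposition~\ref{prop:Bmu}). Therefore the ring of all motivic cohomology operations on $H^{1,i}$, which is $H^{*,*}(K(\F(i),1))$, is isomorphic as a ring to $H^{*,*}(B\mu_\ell)$, and the isomorphism is given concretely by $\theta\mapsto\theta\circ\gamma^{-1}$, i.e.\ precomposition with $\gamma$. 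Proposition~\ref{prop:Bmu} (the case $\ell\ne2$, or simply the $\zeta\in k$ case where the relation $u^2+[-1]u+[\zeta]v$ degenerates appropriately) then tells us $H^{*,*}(B\mu_\ell)\cong H^{*,*}(k)\oo\F[u,v]/(u^2)$ with $v=\beta(u)$, and that every operation on $H^{1,1}$ is uniquely a sum of $x\mapsto cx^\varepsilon\beta(x)^m$. Transporting along $\gamma$, every operation on $H^{1,i}$ is uniquely a sum of $x\mapsto c(\gamma x)^\varepsilon\beta(\gamma x)^m$.

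I expect the only real subtlety is bookkeeping with weights and roots of unity: one must check that the chosen isomorphism $\muell{i}\cong\mu_\ell$ is compatible with the Bockstein (so that $\beta$ on $H^{1,i}$ corresponds to $\beta$ on $H^{1,1}$ under $\gamma$), and that $\gamma$ is compatible with cup products so the monomial description transports correctly — but both follow from naturality of the Bockstein and of cup product with respect to change-of-coefficient-sheaf maps. There is also the mild point that $\gamma$ depends on the choice of $\zeta$, but this is already implicit in Proposition~\ref{prop:Bmu}. No hard geometric input beyond the Norm Residue Theorem and Proposition~\ref{prop:Bmu} is needed; the argument is essentially a transport of structure.
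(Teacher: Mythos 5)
Your argument is correct and follows essentially the same route as the paper: apply the Norm Residue Theorem \ref{NRT} to identify $H^{1,i}$ with $H^1_\et(-,\muell{i})$, use the choice of $\zeta$ (i.e.\ cup product with $[\zeta]^{i-1}$) to obtain the natural isomorphism $\gamma$, and transport the classification of operations on $H^{1,1}$ from Proposition \ref{prop:Bmu} along $\gamma$. The only cosmetic difference is your extra framing via classifying spaces; note also that at $\ell=2$ the relation $u^2+[-1]u+[\zeta]v$ does not ``degenerate''---one simply uses that the monomials $u^\varepsilon v^m$ still form an $H^{*,*}(k)$-basis, which is all that is needed.
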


\begin{proof}
By Theorem \ref{NRT}, $H^{1,i}(X)\cong H_\et^1(X,\muell{i})$ for all $i>0$.
Since multiplication by $[\zeta^{i-1}]$ is an isomorphism between
$H_\et^n(X,\mu_\ell)$ and $H_\et^n(X,\muell{i})$, 
its inverse isomorphism $\gamma$ is natural.
Via $\gamma$, operations on $H^{1,i}$ correspond
to operations on $H^{1,1}$, which are described in Proposition \ref{prop:Bmu}.
\end{proof}

For example if $i\ge2$ and $\eta\in H_\et^1(k,\muell{2-i})$ then 
the operation $H^{1,i}\to H^{2,2}$ of Example \ref{ex:eta} is the 
operation $x\mapsto c(\gamma x)$ of Proposition \ref{H1i_withzeta}, 
where $c=\eta\cup[\zeta]^{i-1}$.

\begin{subrem}
If $c\in H^{s,j}(k)$ then $\phi(x)= c(\gamma x)^\varepsilon(\beta\gamma x)^m$
is a cohomology operation of bidegree 
$(s+\varepsilon+2m-1,j+\varepsilon+m-i)$.
In particular $\gamma$ is a cohomology operation of bidegree $(0,1-i)$.
\end{subrem}

\medskip\goodbreak
\paragraph{\bf Galois descent}
We now consider the situation in which $\mu_\ell\not\subset k$.
Clearly, not all cohomology operations defined over $k(\zeta)$
are defined over $k$. However, some of these operations do descend,
such as those in Example \ref{ex:eta}.


It is convenient to consider the \'etale cohomology
of $k$ as being bigraded, by integers $n\ge0$ and $i\in\Z$,
with $H_\et^n(k,\muell{i})$ in bidegree $(n,i)$. Thus the
motivic cohomology ring $H^{*,*}(k)$ is a bigraded subring of 
$H_\et^*(k,\muell{*})$. 

\begin{defn}
For each integer $b$, let $\zeta^{-b}H^{*,*}(k)$ denote the
direct sum of all $H_\et^s(k,\muell{t})$ with $0\le s\le t+b$. This
is a bigraded $H^{*,*}(k)$-submodule of $H_\et^*(k,\muell{*})$.
It is a cyclic module if and only if $\zeta^b\in k$, when it is
the $H^{*,*}(k)$-submodule generated by $[\zeta^b]\in H_\et^0(k,\muell{-b})$.
\end{defn}

\begin{thm}
Fix an integer $i\ge2$. Then the ring of cohomology operations on
$H^{1,i}$ is the direct sum of copies of $\zeta^{-b}H^{*,*}(k)$, 
$b=(i-1)(\varepsilon+m)$, over integers $m\ge0$ and $\varepsilon\in\{0,1\}$
If $0\le s\le t+b$, the operation corresponding to 
$c\in H_\et^s(k,\muell{t})$, 
$m$ and $\varepsilon$ sends $H^{1,i}(X)$ to 
$H^{s+\varepsilon+2m,t+b+\varepsilon+m}(X)$:
\[
\phi(\zeta^{i-1}\cup y) = (\zeta^b\cup c)y^\varepsilon\beta(y)^m .
\]
\end{thm}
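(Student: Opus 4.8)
The plan is to reduce to the case $k=k(\zeta)$, where the classification is Proposition~\ref{H1i_withzeta}, and then pass to Galois invariants. Let $G$ be the Galois group of $k(\zeta)/k$; it is cyclic of order $d=[k(\zeta):k]$ dividing $\ell-1$, and we fix a generator $\sigma$ together with the integer $a\in(\Z/\ell)^\times$ determined by $\sigma(\zeta)=\zeta^a$. (For $\ell=2$ one has $d=1$, so $G$ is trivial and the result reduces to Proposition~\ref{prop:Bmu}; assume $\ell$ odd below.) As in the proof of Theorem~\ref{thm:etaleops}, the simplicial scheme $K$ classifying $H^{1,i}$ over $k$ base-changes to the corresponding classifying scheme over $k(\zeta)$, and $K_{k(\zeta)}\to K$ is levelwise finite \'etale of degree $d$. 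Since $d$ is prime to $\ell$, the usual transfer argument (compare the proof of Proposition~\ref{prop:detecting}) gives $H^{p,q}(K)\cong H^{p,q}(K_{k(\zeta)})^G$ for all $p,q$; equivalently, restriction along $k\to k(\zeta)$ identifies the ring of motivic cohomology operations on $H^{1,i}$ over $k$ with the ring of $G$-invariant operations on $H^{1,i}$ over $k(\zeta)$.

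Next I would compute the $G$-action on operations over $k(\zeta)$. By Proposition~\ref{H1i_withzeta} every such operation is uniquely a sum of operations $x\mapsto c\,(\gamma x)^\varepsilon\beta(\gamma x)^m$ with $c\in H^{*,*}(k(\zeta))$, $\varepsilon\in\{0,1\}$ and $m\ge 0$, where $\gamma$ is the inverse of the isomorphism ``cup with $[\zeta]^{i-1}$''. Since $\sigma$ scales $[\zeta]\in H^{0,1}(k(\zeta))\cong\mu_\ell$ by $a$, it scales ``cup with $[\zeta]^{i-1}$'' by $a^{i-1}$, hence it scales $\gamma$ by $a^{1-i}$; and the Bockstein $\beta$ is $G$-equivariant by naturality. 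Therefore $\sigma$ carries $c\,(\gamma x)^\varepsilon\beta(\gamma x)^m$ to $\sigma(c)\,a^{(1-i)(\varepsilon+m)}\,(\gamma x)^\varepsilon\beta(\gamma x)^m$, and by the uniqueness clause of Proposition~\ref{H1i_withzeta} this operation is $G$-invariant exactly when $\sigma(c)=a^{b}c$ for all $\sigma$, where $b=(i-1)(\varepsilon+m)$. So the ring of operations over $k$ is the direct sum, over $m\ge 0$ and $\varepsilon\in\{0,1\}$, of the $a^b$-eigenspace $E_b=\{c\in H^{*,*}(k(\zeta)):\sigma(c)=a^bc\ \forall\sigma\}$, the summand indexed by $(\varepsilon,m)$ serving as the coefficient of $(\gamma x)^\varepsilon\beta(\gamma x)^m$.

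It remains to identify $E_b$ with $\zeta^{-b}H^{*,*}(k)$. For a field the Norm Residue Theorem~\ref{NRT} gives $H^{s,t}(k(\zeta))\cong H_\et^s(k(\zeta),\muell{t})$ for $0\le s\le t$ (and $0$ otherwise), and $[\zeta]$ is a unit in the $\Z$-graded ring $\bigoplus_j H_\et^0(k(\zeta),\muell{j})$; hence multiplication by $[\zeta]^{-b}$ carries the $a^b$-eigenspace of $H_\et^s(k(\zeta),\muell{t})$ isomorphically onto the $G$-fixed subgroup of $H_\et^s(k(\zeta),\muell{t-b})$, which by Galois descent (the Hochschild--Serre spectral sequence collapses because $d$ is prime to $\ell$) equals $H_\et^s(k,\muell{t-b})$. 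Equivalently, ``cup with $\zeta^b$'' defines an isomorphism $\zeta^{-b}H^{*,*}(k)\map{\sim}E_b$ that raises the weight by $b$. Thus the operation attached to an element of $\zeta^{-b}H^{*,*}(k)$, say $c\in H_\et^s(k,\muell{t})$ with $0\le s\le t+b$, together with $m$ and $\varepsilon$, is $x\mapsto(\zeta^b\cup c)\,(\gamma x)^\varepsilon\beta(\gamma x)^m$; writing $y=\gamma(x)$, so $x=\zeta^{i-1}\cup y$, this reads $\phi(\zeta^{i-1}\cup y)=(\zeta^b\cup c)\,y^\varepsilon\beta(y)^m$, and summing the bidegrees of $c$, of $\zeta^b\in H^{0,b}$ and of $y^\varepsilon\beta(y)^m\in H^{\varepsilon+2m,\varepsilon+m}$ gives the target $H^{s+\varepsilon+2m,\,t+b+\varepsilon+m}(X)$, as asserted.

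I expect the main obstacle to be the first step: making precise that operations on $H^{1,i}$ over $k$ are exactly the $G$-invariant operations over $k(\zeta)$. This rests on the base-change behaviour of the classifying space (handled as in Theorem~\ref{thm:etaleops}) together with a transfer argument for the motivic cohomology of the simplicial schemes involved, available because $[k(\zeta):k]$ divides $\ell-1$ and is therefore prime to $\ell$. Granting this, the rest is the linear algebra of a cyclic group action plus the field case of the Norm Residue Theorem; a secondary point needing care is the precise $G$-equivariance of $\gamma$ and its compatibility with the uniqueness of the expansion in Proposition~\ref{H1i_withzeta}.
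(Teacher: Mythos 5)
Your argument is correct and is essentially the paper's own proof: reduce, via the transfer/invariant-summand identification, to $G$-invariant operations over $k(\zeta)$, apply Proposition \ref{H1i_withzeta}, and identify the relevant isotypical component (your eigenspace $E_b$) with $\zeta^{-b}H^{*,*}(k)$ via Shapiro's lemma \eqref{eq:Shapiro}. You merely spell out the eigenvalue computation and the base-change/transfer step in more detail than the paper does.
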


\begin{proof}
Let $G$ denote the Galois group of $k(\zeta)/k$.
Since $H^{*,*}(X)$ is the $G$-invariant summand of $H^{*,*}(X(\zeta))$,
a motivic operation $H^{1,i}(X)\to H^{*,*}(X)$ is the same thing as a
$G$-invariant operation $H^{1,i}(X)\to H^{*,*}(X(\zeta))$.
Given $x\in H^{1,i}(X)$, there is a unique $y\in H^{1,1}(X(\zeta))$
so that $x=[\zeta]^{i-1}\cup y$, where $[\zeta]\in H^{0,1}(k(\zeta))$. 
By Proposition \ref{H1i_withzeta},
we are reduced to determining when $G$ acts trivially on
$c'y^\varepsilon(\beta y)^m$. Since $y^\varepsilon(\beta y)^m$ is in the
summand of $H^{*,*}(k(\zeta))$ which is isotypical for $\muell{-b}$,
this holds if and only if $c'$ is in the summand of $H^{s,j}(k(\zeta))$ 
which is isotypical for $\muell{b}$. By \eqref{eq:Shapiro}, there is a
unique $c\in H^{s,j-b}(k)$ so that $c'=[\zeta]^{b}\cup c$.
\end{proof}

\begin{ex}[$b=1$]\label{ex:b=1}
An element $c$ in $H_\et^1(k,\F)=\Hom(\text{Gal}(\bar{k}/k),\F)$ 
determines operations
$C:H^{1,2}(X)\to H^{2,1}(X)$ and $\phi: H^{1,2}(X)\to H^{3,2}(X)$.
If $y\in H^{1,1}(X(\zeta))$ is such that $x=[\zeta]\cup y$ then,
regarding $\zeta c$ as an element of $H^{1,1}(k(\zeta))$, we have
$C(x)=(\zeta c)y$ and $\phi(x)=(\zeta c)\beta(y)$. Of course, we can
identify $C$ with the map $H^1_\et(X,\mu_\ell)\map{\cup c}H^2_\et(X,\mu_\ell)$.

An element $t$ in $H_\et^2(k,\mu_\ell)$ (the $\ell$-torsion subgroup of the
Brauer group of $k$) determines operations $H^{1,2}(X)\to H^{3,3}(X)$
and $H^{1,2}(X)\to H^{4,3}(X)$. 
\edit{$H^{43}$ 12/31}
Writing $x=[\zeta]\cup y$ in $H^{1,2}(X(\zeta))$, the operations 
followed by the inclusion $H^{*,*}(X)\subset H^{*,*}(X(\zeta))$ send
$x$ to $([\zeta]\cup t)y$ and  $([\zeta]\cup t)\beta(y)$, respectively.
As mentioned in the introduction, we can identify the first operation
with $H^1_\et(X,\muell{2})\map{\cup t}H^3_\et(X,\muell{3})$.
\end{ex}


\medskip\goodbreak
\section{Conjectural matter}\label{sec:conjectures}
\medskip

In the preceeding two sections we have classified 
motivic cohomology operations on $H^{n,i}$ when $n=1$ or $i=1$.
We have also classified operations whose targets lie  inside the
``\'etale zone'' where $n\le i$. We know little about the intermediate
zone where $i<n<2i$. In this section we make some guesses about 
operations in the ``topological zone'' where $n\ge2i$.

\begin{ex}
There are many operations defined on $H^{n,2}$, $n\ge2$.
Let us compare Voevodsky's operation $P^1_V$
(landing in $H^{n+2\ell-2,\ell+1}$) with our operation $P^1$ 
(landing in $H^{n+2\ell-2,2\ell}$). Thus $P^1$ has the same bidegree
as $[\zeta]^{\ell-1}P^1_V$, where $[\zeta]\in H^{0,1}(k)$.
If $n\ge4$, we have $P^1=[\zeta]^{\ell-1}P^1_V$
by Corollary \ref{PV-vs-P}. If $n=2$ we also have
$P^1=[\zeta]^{\ell-1}P^1_V$ because
they induce the same \'etale operation ($P^1$) from
$H^{2,2}(X)\cong H_\et^2(X,\muell2)$ to
$H^{2\ell,2\ell}(X)\cong H_\et^{2\ell}(X,\muell{2\ell})$.
We do not know if $P^1$ and $[\zeta]^{\ell-1}P^1_V$ agree on $H^{3,2}$.
%
\end{ex}

Suppose that $\phi$ is a motivic cohomology operation on $H^{n,i}$
where $n\ge2i$. Passing to \'etale cohomology sends $\phi$ to an
\'etale operation, which by Theorem \ref{thm:etaleops} is a polynomial
in the \'etale operations $P^I$.
By Proposition \ref{prop:detecting}, some multiple of the Bott element
$b$ sends $\phi$ to operations $b^N\phi$ which are in the subalgebra
generated by the motivic operations $P^I$ defined in \ref{motivic-exist}.
It remains to determine what those powers are.

The following result of Voevodsky \cite[3.6--7]{RPO} shows
that all non-trivial operations in the topological zone increase $n$.

\begin{lem}\label{iota}[Voevodsky]
There are no motivic cohomology operations from $H^{2i,i}$ to 
$H^{n,j}$ when $j<i$, or when $i=j$ and $(n,j)\ne(2i,i)$. 
The module of motivic cohomology operations from $H^{2i,i}$ to 
$H^{*,i}$ is isomorphic to $\F$, on the identity.
\end{lem}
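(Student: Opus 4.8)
The plan is to reduce the statement to a computation of the low-degree, low-weight motivic cohomology of an Eilenberg--Mac\,Lane space. Write $K=K(\F(i),2i)$ for the simplicial abelian presheaf classifying $H^{2i,i}(-,\F)$, so that (basepoint-preserving) motivic cohomology operations $H^{2i,i}\to H^{n,j}$ are in bijection with the reduced cohomology $\tilde H^{n,j}(K)$. Since $2i\ge 2i$, \cite[3.28]{V11} applies and shows $\Ftr(K)$ is a split proper Tate motive of weight $\ge i$; hence $\Ftr(K)=\bigoplus_\alpha \mathbb L^{a_\alpha}[b_\alpha]$ with all $a_\alpha\ge i$ and $b_\alpha\ge 0$, and $\tilde H^{*,*}(K)$ is a free bigraded $H^{*,*}(k)$-module on generators $g_\alpha$ placed in bidegree $(2a_\alpha+b_\alpha,\,a_\alpha)$, the bottom one being the fundamental class $\iota\in H^{2i,i}(K)$ (the identity operation), attached to the summand $\mathbb L^i=\F(i)[2i]$.

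Granting this, I would first dispose of the case $j<i$, which is immediate: every summand $H^{\,n-2a_\alpha-b_\alpha,\;j-a_\alpha}(k)$ of $\tilde H^{n,j}(K)$ vanishes because $j-a_\alpha\le j-i<0$ and motivic cohomology of a field is zero in negative weight. For $j=i$, only the generators with $a_\alpha=i$ can contribute, and such a generator, sitting in bidegree $(2i+b_\alpha,\,i)$, contributes $H^{\,n-2i-b_\alpha,\,0}(k)$, which by Lemma \ref{twist0} is $\F$ if $n=2i+b_\alpha$ and $0$ otherwise. So everything comes down to showing that $\mathbb L^i$ is the \emph{only} weight-$i$ summand of $\Ftr(K)$ --- equivalently, that $\tilde H^{2i+b,i}(K)=0$ for all $b\ge 1$ --- which then yields $\tilde H^{*,i}(K)=\F\cdot\iota$ and $\tilde H^{n,j}(K)=0$ for $j<i$, as asserted.

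Combining Proposition \ref{prop:detecting} with Theorem \ref{thm:etaleops} (and Lemma \ref{twist0}, which gives $H^{1,0}(k)=0$), the degree-$(2i+1)$ case of this reduces to the vanishing of $\beta(\iota)$, which holds because for $Y$ smooth one has $H^{2i+1,i}(Y,\Z)\cong\mathrm{CH}^i(Y,-1)=0$ (cf.\ \cite{MVW}), so the integral, hence the mod-$\ell$, Bockstein vanishes on $H^{2i,i}(Y,\F)$ and no summand $\mathbb L^i[1]$ occurs. The vanishing of $\tilde H^{2i+b,i}(K)$ for $b\ge 2$ does not follow formally from the Tate structure; it is the content of Voevodsky's explicit computation of the low-weight motivic cohomology of $K$ in \cite[3.6--7]{RPO}, and reflects the fact that the reduced power operations $P^a$ ($a\ge0$) all raise the weight to $i\ell>i$. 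This last point is the one genuine obstacle, and I would quote \cite[3.6--7]{RPO} for it rather than reproduce Voevodsky's calculation or try to adapt the Borel--Kudo transgression bookkeeping of Sections \ref{sec:weight1}--\ref{sec:degree1} to weight $i$.
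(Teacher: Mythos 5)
First, a point of comparison: the paper does not prove this lemma at all --- it is quoted from Voevodsky \cite[3.6--7]{RPO} --- and your proposal ultimately defers the essential content to that same citation (the vanishing of $\tilde H^{2i+b,i}(K)$ for $b\ge2$, and, implicitly, the fact that the weight-$i$ summand $\mathbb L^i$ occurs with multiplicity exactly one, which you assume when you declare $\iota$ to be ``the bottom'' generator). So the only parts you actually argue are the case $j<i$ and your $b=1$ step. The $j<i$ reduction is fine: with $\Ftr(K)$ split proper Tate of weight $\ge i$ by \cite[3.28]{V11}, every summand of $\tilde H^{n,j}(K)$ is a motivic cohomology group of $k$ in negative weight, hence zero; this is the same mechanism the paper uses in Lemma \ref{Bott-injects} and Proposition \ref{prop:detecting}.

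The genuine gap is the $b=1$ step, and it fails in two ways. First, the claimed reduction ``to the vanishing of $\beta(\iota)$'' does not follow from Proposition \ref{prop:detecting} together with Theorem \ref{thm:etaleops}: an injection into the ring of \'etale operations cannot by itself shrink the motivic group, and in degree $2i+1$ that ring also contains the operations $c\cdot\mathrm{id}$ with $c\in H_\et^1(k(\zeta),\F)$; the fact that $H^{1,0}(k)=0$ (Lemma \ref{twist0}) does not exclude these, because nothing pins the image of a weight-$i$ motivic class to the ``weight-$i$'' part of the \'etale answer. Second, and more seriously, the inference ``$\beta$ vanishes on $H^{2i,i}(Y,\F)$ for smooth $Y$, hence $\beta(\iota)=0$ and no summand $\mathbb L^i[1]$ occurs'' is invalid: operations here are natural transformations on smooth \emph{simplicial} schemes, the universal example $K$ is a simplicial sheaf, and $H^{p,q}$ with $p>2q$ does not vanish on simplicial objects, so an operation can vanish on every smooth scheme (because its target does) and still be nonzero. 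The paper's own Corollary \ref{H21-ops} exhibits exactly this phenomenon: $\beta\colon H^{2,1}\to H^{3,1}$ is a free generator of the ring of operations on $H^{2,1}$ even though $H^{3,1}(Y,\F)=0$ for every smooth $Y$; equivalently $\beta(\iota)\ne0$ in $\tilde H^{3,1}(K(\F(1),2))$, so for $i=1$ the class you claim to kill is nonzero, and Voevodsky's description of the motive of $K(\F(i),2i)$ in \cite{V11} produces such an $\mathbb L^i[1]$ summand (carrying $\beta\iota$) for every $i$. This signals that the lemma must be read as in Voevodsky --- with the integral group $H^{2i,i}(-,\Z)=\mathrm{CH}^i$ as source, i.e.\ universal example $K(\Z(i),2i)$, where the Bockstein contribution is absent --- rather than for the mod-$\ell$ space $K(\F(i),2i)$ on which your whole reduction is built; as it stands your $b=1$ argument proves a false statement about that space, and the remaining content is the citation the paper already makes.
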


%

\begin{Conjecture}\label{Conj}
Assume that $k$ contains all primitive $\ell^{th}$ roots of unity,
and that $n\ge2i$.
Then the module of all motivic cohomology operations on $H^{n,i}(-,\F)$
is the tensor product of $H^{*,*}$ and a free graded polynomial algebra 
over $\F$ with generators all
$P^IP_V^J$, where $I=(\epsilon_0,s_1,\epsilon_1,...,s_k,\epsilon_k)$,
$J=(s_{k+1},\epsilon_{k+1},...,s_m,\epsilon_m)$ subject to the conditions
that (a) the concatenation $IJ$ is admissible with excess $e(IJ)$ either
$<4$ or else $\epsilon_0=1$ and $e(IJ)=4$; and
(b) for all $j>k$, $s_j<i+(\ell-1)\sum_{j+1}^m s_i$.
\end{Conjecture}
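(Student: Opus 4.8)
The plan here is really a roadmap for how a proof \emph{should} go, following the pattern of Theorems \ref{thm:etaleops} and \ref{Hn1-ops}. First I would reduce, as always, to computing the bigraded $H^{*,*}$-module $H^{*,*}(K)$ for the motivic Eilenberg--Mac\,Lane space $K=K(\F(i),n)$ with $n\ge2i$: by \cite[3.28]{V11}, $\Ftr(K)$ is a split proper Tate motive, so $H^{*,*}(K)$ is a free bigraded $H^{*,*}$-module and, by Lemma \ref{Bott-injects}, $b$ acts injectively on it, whence $H^{*,*}(K)$ embeds in $H^{*,*}(K)[1/b]$. By Proposition \ref{prop:detecting} together with Levine's Theorem \ref{H[1/b]}, $H^{*,*}(K)[1/b]\cong H_\et^*(K,\muell{*})$, which by Theorem \ref{thm:etaleops} (we may assume $\zeta\in k$ by a transfer argument) is the free graded-commutative algebra, over $H_\et^*(k,\F)$, on the \'etale classes $P^K(\iota_n)$ with $K$ admissible of excess $<n$ (or $=n$ with $\epsilon_0=1$). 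So the whole problem is to pin down the image of $H^{*,*}(K)$ inside this free \'etale algebra.

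The bookkeeping of which \'etale monomials are realized by which motivic operations, and in what weight, comes from comparing the three kinds of power operations. Both the motivic $P^a$ of \ref{motivic-exist} and Voevodsky's $P_V^a$ refine the \'etale $P^a$ (Lemma \ref{lem:mot-et}, Remark \ref{PV-Pet}), and on a class of weight $w$ with degree at least $2a$ they are related, by Corollary \ref{PV-vs-P}, via $P^a=[\zeta]^{(w-a)(\ell-1)}P_V^a$ when $a\le w$ and $P_V^a=[\zeta]^{(a-w)(\ell-1)}P^a$ when $a\ge w$. Reading an admissible sequence $K=(\epsilon_0,s_1,\dots,s_m,\epsilon_m)$ from the right --- the order in which the operations act --- the weight just before the $j$-th factor is $w_j=i+(\ell-1)\sum_{t>j}s_t$; while $s_j<w_j$ the cheapest (lowest-weight) motivic refinement of $P^{s_j}$ at that stage is the bistable $P_V^{s_j}$, and as soon as $s_j\ge w_j$ it is the honest $P^{s_j}$, which multiplies the weight by $\ell$. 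Admissibility forces the switch to occur exactly once, at an index $k$: this splits $K=IJ$ with $I=(\epsilon_0,s_1,\dots,s_k,\epsilon_k)$ the honest-$P$ block and $J=(s_{k+1},\dots,s_m,\epsilon_m)$ the Voevodsky block, and the inequalities $s_j<w_j$ for $j>k$ are exactly condition (b). Thus $P^IP_V^J(\iota_n)$ should be the minimal-weight motivic lift of $P^K(\iota_n)$, with all larger-weight lifts obtained by multiplying by a power of $b$.

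With this in hand one runs the bar-construction induction on $n\ge2i$, exactly as in the proof of Theorem \ref{Hn1-ops}. For the step $n\to n+1$ one uses $K(\F(i),n+1)=B_\mathdot K(\F(i),n)$ and the Leray spectral sequence \eqref{eq:LSS}; the K\"unneth hypothesis of Proposition \ref{Leray} holds because $\Ftr(K(\F(i),n))$ is split proper Tate (by \cite[3.28]{V11}), so Borel's Theorem \ref{Borel} and Kudo's Theorem \ref{Kudo} apply once an $\ell$-simple system of transgressive generators of $H^{*,*}(K(\F(i),n))$ is identified. The proposed generators $P^IP_V^J(\iota_n)$ of (a)--(b) are transgressive: the $P^a$ are simplicially stable (Proposition \ref{S1-stable}) and the $P_V^a$ are bistable, so all their composites are, and stable operations commute with transgression. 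Kudo's Theorem then produces the transgressed generators $P^IP_V^J(\iota_{n+1})$ together with the extra classes coming from the $Q^a$, which by Theorem \ref{Bock-motivic} are the $\beta$-shifts, i.e.\ the generators with $\epsilon$'s turned on; the weight bookkeeping (each extra $P^{\ell^t s}$ multiplies the weight by $\ell$) is as before. For the base case $n=2i$ one must compute $H^{*,*}(K(\F(i),2i))$ outright --- Lemma \ref{iota} controls only the weight-exactly-$i$ slice --- using Voevodsky's description of $\Ftr$ of motivic Eilenberg--Mac\,Lane spaces in \cite{V11}.

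The main obstacle --- and the reason this remains a conjecture --- is condition (a): establishing that the admissible sequences $IJ$ which actually occur are governed by the small excess bound $e(IJ)<4$ (or $=4$ with $\epsilon_0=1$) rather than the naive $e(IJ)<n$ inherited from the \'etale computation. Concretely, this amounts to showing that for $n$ much larger than $i$ the \'etale generators $P^K(\iota_n)$ of intermediate excess $4\le e(K)<n$ do \emph{not} produce new polynomial generators of $H^{*,*}(K)$: their minimal motivic lifts must become decomposable, a collapse forced by the split-proper-Tate structure of $\Ftr(K)$ from \cite{V11} together with the instability of the $P_V^a$ and the relations of Corollary \ref{PV-vs-P}. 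One must also check that this prediction is consistent with the already-proven weight-one case (Theorem \ref{Hn1-ops}), the \'etale-zone case, and Lemma \ref{iota}; reconciling the excess bound with those computations, and making the $\Ftr(K)$-analysis precise, is the heart of the matter.
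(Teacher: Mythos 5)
The statement you were given is labelled a \emph{Conjecture} in the paper: the authors do not prove it, and the only rigorous content they supply in its direction is Lemma \ref{conj-induct}, which reduces the conjecture for all $n\ge 2i$ to the single base case $n=2i$ via the bar construction, the Leray spectral sequence \eqref{eq:LSS}, Borel's Theorem \ref{Borel} and Kudo's Theorem \ref{Kudo}. Your third paragraph reproduces exactly that inductive step (split proper Tate motive by \cite[3.28]{V11}, hence the K\"unneth hypothesis of Proposition \ref{Leray}; transgressive generators because the $P^a$ are simplicially stable and the $P_V^a$ bistable; Kudo's Theorem \ref{Kudo}(3) together with Theorem \ref{Bock-motivic} for the extra generators), so on that piece you and the paper agree. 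Your first two paragraphs --- injectivity of $b$ on $H^{*,*}(K)$ via Lemma \ref{Bott-injects} and Proposition \ref{prop:detecting}, identification of $H^{*,*}(K)[1/b]$ with the \'etale algebra of Theorem \ref{thm:etaleops}, and the weight bookkeeping via Corollary \ref{PV-vs-P} that motivates the splitting of an admissible sequence as $IJ$ and condition (b) --- are a sensible account of why the conjecture has the shape it does, but they are motivation rather than proof.

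The genuine gap, which you name but cannot close (and which is precisely why this is a conjecture), is the base case: one must compute $H^{*,*}(K(\F(i),2i))$ as an $H^{*,*}$-algebra and show that its image inside the free \'etale algebra is generated by exactly the monomials $P^IP_V^J(\iota)$ satisfying (a) and (b) --- that is, that each \'etale generator admits a unique minimal-weight motivic lift and that no further $b$-divisible or exotic classes occur. Proposition \ref{prop:detecting} gives injectivity of $H^{*,*}(K)\to H_\et^*(K,\muell{*})$ but says nothing about the image, and your assertion that ``all larger-weight lifts are obtained by multiplying by a power of $b$'' is essentially the content of the conjecture, so it cannot serve as a step in its proof. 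In short: your outline accurately reconstructs the one reduction the paper actually proves (Lemma \ref{conj-induct}) and correctly isolates what remains open, but it is not a proof, and the paper contains none to compare it against.
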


For $(n,i)=(4,2)$ this conjecture implies that among the polynomial 
generators for the motivic operations on $H^{4,2}$ we find
$P^{\ell^2+\ell+1}\beta P_V^{\ell+1}\beta P_V^1\beta$.
If $\ell=2$, we may rewrite this operations as
$Sq^{14} Sq_V^7 Sq_V^3 Sq_V^1$; compare with \cite[3.57]{V11}.
%

\begin{lem}\label{conj-induct}
If Conjecture \ref{Conj} holds for $H^{2i,i}$ then 
it holds for all $H^{n,i}$ with $n\ge2i$.
\end{lem}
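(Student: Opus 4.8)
The plan is to mimic exactly the inductive argument used in the proof of Theorem~\ref{Hn1-ops}, replacing the base case $H^{2,1}$ by the base case $H^{2i,i}$ that we are now assuming, and replacing the coefficient algebra $\F[u,v]/(u^2)$ by whatever the conjecture asserts for $H^{2i,i}$. First I would set $K_n=K(\F(i),n)$, so that $K_{n+1}=B_\mathdot K_n$, and observe that by \cite[3.28]{V11} each $\Ftr(K_n)$ is a split proper Tate motive (this needs $n\ge2i$), so that the K\"unneth condition \eqref{eq:Kunneth} of Proposition~\ref{Leray} holds and the Leray spectral sequence \eqref{eq:LSS} takes the product form
\[
E_2^{p,q}=H^{p,*}(K_{n+1})\oo_{H^{*,*}(k)}H^{q,*}(K_n)\Rightarrow H^{p+q,*}(E)\cong H^{p+q,*}(k).
\]
This verifies hypothesis (i) of Borel's Theorem~\ref{Borel}.

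Next I would verify hypothesis (ii): assuming inductively that $H^{*,*}(K_n)$ is the tensor product of $H^{*,*}$ with the free graded polynomial $\F$-algebra on the generators $P^IP_V^J(\iota_n)$ described in the Conjecture, this algebra automatically has an $\ell$-simple system of generators over $H^{*,*}$, namely the $P^IP_V^J(\iota_n)$ themselves together with the $\ell^\nu$-th powers of those of even degree --- exactly as in the topological situation on p.\,200 of \cite{McCleary}. Each such generator is transgressive because the operations $P^a$ (hence $P^I$) and Voevodsky's $P_V^a$ (hence $P_V^J$) are simplicially stable --- by Proposition~\ref{S1-stable} for $P^a$, and by bistability for $P_V^a$ --- and simplicially stable operations commute with the transgression \cite[6.5]{McCleary}. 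Borel's Theorem then gives that $H^{*,*}(K_{n+1})$ is the tensor product of $H^{*,*}$ with a free graded-commutative $\F$-algebra on the transgressions $y_{IJ}=\tau(x_{IJ})$ and, when $\deg(x_{IJ})=2s$ is even and $\ell\ne2$, the classes $z_{IJ}=\tau(x_{IJ}^{\ell-1}\oo y_{IJ})$.

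It then remains to identify these new generators with the monomials prescribed by the Conjecture for $H^{n+1,i}$. Here I would run the same bookkeeping as in Theorem~\ref{Hn1-ops}: Kudo's Theorem~\ref{Kudo}(2) (valid here by simplicial stability), applied to the relation $x_{IJ}^{\ell^\nu}=P^{\ell^\nu\cdot(\text{top operation})}x_{IJ}$ when the top operation in $IJ$ has appropriate degree, shows inductively that each $y_{IJ}$, and the $\ell^\nu$-th powers that give further generators, are again of the form $P^{I'}P_V^{J'}(\iota_{n+1})$ with $I'J'$ admissible; and Kudo~\ref{Kudo}(1),(3) together with Theorem~\ref{Bock-motivic} ($Q^a=\beta P^a$) account for the Bockstein-bearing and $z$-type generators. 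The admissibility, excess ($<4$, or $=4$ with $\epsilon_0=1$), and the weight-type bound $s_j<i+(\ell-1)\sum_{j+1}^m s_i$ are all conditions that behave identically under this passage from $K_n$ to $K_{n+1}$: the key point, as in Theorem~\ref{Hn1-ops}, is that each prepended $P^{\ell^t s}$ or $P_V^{\ell^t s}$ multiplies the weight by $\ell$ and shifts indices in precisely the way that preserves these inequalities. The one subtlety I expect to cost the most care is checking that the mixed generators $P^IP_V^J$ --- where the ``Epstein'' block $I$ sits outside the ``Voevodsky'' block $J$ --- are matched correctly by the transgression and that no generator is produced twice or missed; this is exactly the analogue of the topological indexing combinatorics in \cite{McCleary}, complicated only by having to track the second (weight) grading through conditions (a) and (b) of the Conjecture.
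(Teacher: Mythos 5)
Your proposal is correct and follows essentially the same route as the paper's own proof: the Leray spectral sequence for $K_{n+1}=B_\mathdot K_n$, the split proper Tate motive condition of \cite[3.28]{V11} giving the K\"unneth hypothesis of Proposition~\ref{Leray}, Borel's Theorem~\ref{Borel}, and then the identification of the new generators via simplicial stability of $P^a$ and $P_V^a$, Kudo's Theorem~\ref{Kudo}, and Theorem~\ref{Bock-motivic} for the $z$-type classes. The bookkeeping you flag as the main subtlety is handled in the paper exactly as you anticipate, by transgressing $x_j=P^IP_V^J(\iota_n)$ to $P^IP_V^J(\iota_{n+1})$ and $x_j^{\ell-1}\oo y_j$ to $-\beta P^aP^IP_V^J(\iota_{n+1})$.
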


\begin{proof}
We consider the Leray spectral sequence \eqref{eq:LSS} for
$G=K(\F(i),n)$ and $K=B_\mathdot G=K(\F(i),n+1)$ when $n\ge2i$. 
By induction, $H^{*,*}(G)$ is a polynomial
algebra over $H^{*,*}$ with an $\ell$-simple system $\{ x_i\}$ of generators.
By \cite[3.28]{V11}, $\Ftr(G)$ is a split proper Tate motive, 
so the K\"unneth condition of Proposition \ref{Leray} holds,
and Borel's Theorem \ref{Borel} implies that $H^{*,*}(K)$
is the tensor product of $H^{*,*}$ and a free graded-commutative 
$\F$-algebra on generators $y_i=\tau(x_i)$ and, 
when $\deg(x_j)$ is even and $\ell>2$,
$z_j=\tau(x_j^{\ell-1}\oo y_j)$. 

We now use the fact that the transgression commutes with any 
($S^1$-)stable cohomology operation, such as $P_V^J$; 
see \cite[6.5]{McCleary}.
Since the tautological element $\iota_n$ of $H^{n,i}(G)$ transgresses 
to the tautological element $\iota_{n+1}$ of $H^{n+1,i}(K)$, 
the generator $x_j=P^IP_V^J(\iota_n)$ transgresses
to $y_j=P^IP_V^J(\iota_{n+1})$ by Kudo's Theorem \ref{Kudo}.
This finishes the proof for $\ell=2$.

If $\ell$ is odd and $x_j=P^IP_V^J(\iota_n)$ has degree $2a$,
\edit{expanded 12/22/12}
the transgression $z_j$ of $x_j^{\ell-1}\oo y_j$ is 
$-\beta P^a P^IP_V^J(\iota_{n+1})$ by Kudo's Theorem \ref{Kudo}(3).
This finishes the proof for $\ell$ odd.
\end{proof}

\bigskip

\section*{Acknowledgements}

The authors would like to thank Peter May for his encouragement,
and for providing us with a copy of \cite{SErr}.

\bigskip

\end{document}